\documentclass[11pt, a4paper]{amsart}
\usepackage[latin1]{inputenc}
\usepackage[english]{babel}
\usepackage{amsmath, amsfonts, amssymb, amsthm}
\usepackage{enumerate}
\usepackage[numbers]{natbib}
\usepackage{setspace}

\onehalfspacing

\setlength{\textwidth}{165mm}
\setlength{\textheight}{215mm}
\setlength{\parindent}{8mm}
\setlength{\oddsidemargin}{0pt}
\setlength{\evensidemargin}{0pt}
\setlength{\topmargin}{0pt}

\usepackage[arrow,matrix,curve]{xy}
\usepackage[pagebackref,colorlinks,linkcolor=black,citecolor=blue,urlcolor=blue,hypertexnames=true]{hyperref}

\usepackage{tikz}
\usetikzlibrary{matrix}

\newtheorem{theorem}{Theorem}[section]
\newtheorem{lemma}[theorem]{Lemma}
\newtheorem{corollary}[theorem]{Corollary}

\theoremstyle{definition}
\newtheorem{definition}[theorem]{Definition}

\newtheorem{remark}[theorem]{Remark}

\newcommand{\N}{\mathbb{N}}
\newcommand{\Z}{\mathbb{Z}}

\newcommand{\R}{\mathbb{R}}
\newcommand{\C}{\mathbb{C}}
		% for finite rank operators
\newcommand{\K}{\mathbb{K}}		% for compact operators
\newcommand{\I}{\mathcal{I}}    % for I-spaces

\newcommand{\Cuntz}[1]{\mathcal{O}_{#1}}
\newcommand{\Aut}[1]{{\rm Aut}(#1)}

\newcommand{\uAut}[1]{{\rm Aut}_0(#1)}
\newcommand{\id}[1]{{\rm id}_{#1}}

\newcommand{\obj}{{\rm obj}}
\newcommand{\mor}{{\rm mor}}
\newcommand{\Proj}[1]{\mathcal{P}r(#1)}

\newcommand{\units}[1]{\Omega^{\infty}(#1)^*}

\DeclareMathOperator{\hocolim}{\rm hocolim}
\DeclareMathOperator{\colim}{\rm colim}
\DeclareMathOperator{\Tel}{\rm Tel}

\newcommand{\Bun}[1]{\mathcal{B}un_X(#1)}

\begin{document}
	
\title{Unit spectra of $K$-theory \\ from strongly self-absorbing $C^*$-algebras}
\author{Marius Dadarlat \and Ulrich Pennig}	
\thanks{M.D. was partially supported by NSF grant \#DMS--1101305}
\begin{abstract}
We give an operator algebraic model for the first group of the unit spectrum $gl_1(KU)$ of complex topological $K$-theory, i.e.\ $[X, BGL_1(KU)]$, by bundles of stabilized infinite Cuntz $C^{\ast}$-algebras $\Cuntz{\infty} \otimes \K$. We develop similar models for the localizations of $KU$ at a prime $p$ and away from $p$. Our work is based on the $\I$-monoid model for the units of $K$-theory by Sagave and Schlichtkrull and it was motivated by 
the goal of finding connections between the infinite loop space structure of the classifying space of the automorphism group of stabilized strongly self-absorbing $C^{\ast}$-algebras  that arose in our generalization  of the  Dixmier-Douady theory  and classical spectra from algebraic topology.

\end{abstract}

\maketitle
\tableofcontents

\section{Introduction}

Suppose $E^{\bullet}$ is a multiplicative generalized cohomology theory represented by a commutative ring spectrum $R$. The units $GL_1(E^0(X))$ of $E^0(X)$ provide an abelian group functorially associated to the space $X$. From the point of view of algebraic topology it is therefore a natural question, whether we can lift $GL_1$ to spectra, i.e.\ whether there is a spectrum of units $gl_1(R)$ such that $gl_1(R)^0(X) = GL_1(E^0(X))$.

It was realized by Sullivan in \cite{book:GeometricTopology} that $gl_1(R)$ is closely connected to questions of orientability in algebraic topology. In particular, the units of $K$-theory act on the $K$-orientations of PL-bundles. Segal \cite{paper:SegalCatAndCoh} proved that the classifying space $\{1\} \times BU \subset \Z \times BU$ for virtual vector bundles of virtual dimension $1$ equipped with the $H$-space structure from the tensor product is in fact a $\Gamma$-space, which in turn yields a spectrum of a connective generalized cohomology  theory $bu^*_\otimes(X)$. His method is easily extended to include the virtual vector bundles of virtual dimension $-1$ to obtain a generalized cohomology theory $gl_1(KU)^*(X) \supset bu^*_{\otimes}(X)$ answering the above question affirmatively: $ GL_1(K^0(X)) \cong gl_1(KU)^0(X)$. Later May, Quinn, Ray and Tornehave \cite{book:MayEInfty} came up with the notion of $E_{\infty}$-ring spectra, which always have associated unit spectra.

Since $gl_1(R)$ is defined via stable homotopy theory, there is in general no nice geometric interpretation of the higher groups even though $R$ may have one. In particular, no geometric interpretation was known for $gl_1(KU)^k(X)$. In this article we give an operator algebra interpretation of $gl_1(KU)^1(X)$ as the group of isomorphism classes of locally trivial bundles of C*-algebras with fiber isomorphic to the stable Cuntz algebra $\mathcal{O}_\infty \otimes \K$ with the group operation induced by the tensor product. 
In fact one can also recover Segal's original infinite loop space $BBU_\otimes$ as $B\Aut{\mathcal{Z} \otimes \K}$,
where $\mathcal{Z}$ is the ubiquitous Jiang-Su algebra \cite{paper:TomsWinter}.
For localizations of $KU$ we obtain that $gl_1(KU_{(p)})^1(X)$ is the group of isomorphism classes of locally trivial bundles with fiber isomorphic to the C*-algebra $M_{(p)}\otimes \mathcal{O}_\infty \otimes \K$ with the group operation induced by the tensor product. 
Here $M_{(p)}$ is a $C^*$-algebra with $K_0(M_{(p)}) \cong \Z_{(p)}$, $K_1(M_{(p)}) = 0$ that can be obtained as an infinite tensor product of matrix algebras.

Our approach is based on the work of Sagave and Schlichtkrull \cite{paper:SagaveSchlichtkrull, paper:Schlichtkrull}, who developed a representation of $gl_1(R)$ for a commutative symmetric ring spectrum $R$ as a commutative $\I$-monoid. Motivated by the definition of twisted cohomology theories, we study the following situation, which appears to be a natural setup beyond   the case where $R$ is $K$-theory: Suppose $G$ is an $\I$-space, such that each $G(\mathbf{n})$ is a topological group acting on $R_n$. To formulate a sensible compatibility condition between the group action $\kappa$ and the multiplication $\mu^R$ on $R$, we need to demand that $G$ itself carries an additional $\I$-monoid structure $\mu^G$ and the following diagram commutes:
\[
	\xymatrix{
	G(\mathbf{m}) \times R_m \times G(\mathbf{n}) \times R_n \ar[rr]^-{\kappa_m \times \kappa_n} \ar[d]_-{(\mu^G_{m,n} \times \mu^R_{m,n}) \circ \tau} & &  R_m \times R_n \ar[d]^-{\mu^R_{m,n}}\\
	G(\mathbf{m \sqcup n}) \times R_{m+n} \ar[rr]_-{\kappa_{m+n}}& & R_{m+n}
	}
\] 
where $\tau$ switches the two middle factors. Associativity of the group action suggests that the analogous diagram, which has $G(\mathbf{n})$ in place of $R_n$ and $\mu^G$ instead of $\mu^R$, should also commute. This condition can be seen as a homotopy theoretic version of the property needed for the Eckmann-Hilton trick, which is why we will call such a $G$ an Eckmann-Hilton $\I$-group (EH-$\I$-group for short). Commutativity of the above diagram has the following important implications:
\pagebreak
\begin{itemize}
	\item the $\I$-monoid structure of $G$ is commutative (Lemma \ref{lem:commutative}),
	\item the classifying spaces $B_{\nu}G(\mathbf{n})$ with respect to the group multiplication $\nu$ of $G$ form a commutative $\I$-monoid $\mathbf{n} \mapsto B_{\nu}G(\mathbf{n})$,
	\item if $G$ is convergent and $G(\mathbf{n})$ has the homotopy type of a CW-complex, then the $\Gamma$-spaces associated to $G$ and $B_{\nu}G$ satisfy $B_{\mu}\Gamma(G) \simeq \Gamma(B_{\nu}G)$, where $B_{\mu}\Gamma(X)$ for a commutative $\I$-monoid $X$ denotes the $\Gamma$-space delooping of $\Gamma(X)$ (Theorem \ref{thm:EHI_group}).
\end{itemize}
Let $\units{R}(\mathbf{n})$ be the commutative $\I$-monoid with associated spectrum $gl_1(R)$. If $G$ acts on $R$ and the inverses of $G$ with respect to both multiplicative structures $\mu^G$ and $\nu$ are compatible in the sense of Definition \ref{def:EHI_group}, then the action induces a map of $\Gamma$-spaces $\Gamma(G) \to \Gamma(\units{R})$. This deloops to a map $B_{\mu}\Gamma(G) \to B_{\mu}\Gamma(\units{R})$ and we give sufficient conditions for this to be a strict equivalence of (very special) $\Gamma$-spaces.

In the second part of the paper, we consider the EH-$\I$-group $G_A(\mathbf{n}) = \Aut{(A \otimes \K)^{\otimes n}}$ associated to the automorphisms of a (stabilized) strongly self-absorbing $C^*$-algebra $A$. This class of $C^*$-algebras was introduced by Toms and Winter in \cite{paper:TomsWinter}. It contains the algebras $\Cuntz{\infty}$ and $M_{(p)}$ alluded to above as well as the Jiang-Su algebra $\mathcal{Z}$ and the Cuntz algebra $\Cuntz{2}$. It is closed with respect to tensor product and plays a fundamental role in the classification theory of nuclear $C^*$-algebras. 

For a strongly self-absorbing $C^*$-algebra $A$, $X \mapsto K_0(C(X) \otimes A)$ turns out to be a multiplicative cohomology theory. In fact, this structure can be lifted to a commutative symmetric ring spectrum $KU^A$ along the lines of \cite{paper:HigsonGuentner, paper:Joachim, paper:FunctorialKSpectrum}. The authors showed in \cite{paper:DadarlatP1} that $B\Aut{A \otimes \K}$ is an infinite loop space and the first space in the spectrum of a generalized cohomology theory $E_A^*(X)$ such that $E_A^0(X) \cong K_0(C(X) \otimes A)^{\times}_+$, in particular $E_{\Cuntz{\infty}}^0(X) \cong GL_1(K^0(X))$, which suggest that $E_{\Cuntz{\infty}}^*(X) \cong gl_1(KU)^*(X)$. In fact, we can prove:
\begin{theorem}
Let $A \neq \C$ be a separable strongly self-absorbing $C^*$-algebra. 
\begin{enumerate}[{\rm (a)}]
	\item The EH-$\I$-group $G_A$ associated to $A$ acts on the commutative symmetric ring spectrum $KU^A_{\bullet}$ inducing a map $\Gamma(G_A) \to \Gamma(\units{KU^A})$.
	\item The induced map on spectra is an isomorphism on all homotopy groups $\pi_n$ with $n > 0$ and the inclusion $K_0(A)^{\times}_+ \to K_0(A)^{\times}$ on $\pi_0$. 
	\item In particular $B\Aut{A \otimes \Cuntz{\infty} \otimes \K} \simeq BGL_1(KU^A)$ and 
	\(
		gl_1(KU)^1(X) \cong \Bun{A \otimes \Cuntz{\infty} \otimes \K}
	\),
	where the right hand side denotes the group of isomorphism classes of $C^*$-algebra bundles with fiber $A \otimes \Cuntz{\infty} \otimes \K$ with respect to the tensor product $\otimes$.
\end{enumerate}

\end{theorem}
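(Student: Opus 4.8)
The plan is to handle the three parts in sequence: (a) identifies $G_A$ as an EH-$\I$-group acting on $KU^A_{\bullet}$, (b) compares the two associated spectra on homotopy groups, and (c) extracts the stated equivalence by exploiting the order structure of purely infinite algebras. For (a) I would take $KU^A_{\bullet}$ to be the commutative symmetric ring spectrum modelling the $K$-theory of $C_0(\R^n)\otimes A$ as in \cite{paper:HigsonGuentner, paper:Joachim, paper:FunctorialKSpectrum}, with $\mu^R$ the external product. The group $G_A(\mathbf{n}) = \Aut{(A\otimes\K)^{\otimes n}}$ acts on $KU^A_n$ by functoriality of $K$-theory, absorbing the stabilization via $(A\otimes\K)^{\otimes n}\cong A\otimes\K$, with $\mu^G$ the tensor product of automorphisms and $\nu$ their composition. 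The compatibility square from the introduction and its $\mu^G$-analogue then amount to the naturality of the external product under tensor products of $*$-homomorphisms, so $G_A$ is an EH-$\I$-group; checking that $\alpha\mapsto\alpha^{-1}$ is compatible with the inverse of the induced unit (Definition \ref{def:EHI_group}) yields, by the first part of the paper, the map $\Gamma(G_A)\to\Gamma(\units{KU^A})$.

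For (b) I would identify the homotopy of both spectra. On the target, $gl_1(KU^A)$ has $\pi_0 = (\pi_0 KU^A)^{\times} = K_0(A)^{\times}$ and $\pi_k = \pi_k(KU^A) = K_k(A)$ for $k\geq 1$, since a unit spectrum agrees with its ring spectrum in positive degrees. On the source, \cite{paper:DadarlatP1} identifies the spectrum of $\Gamma(G_A)$ with $E_A$, built from $\Aut{A\otimes\K}$, giving $\pi_0 = \pi_0(\Aut{A\otimes\K}) = K_0(A)^{\times}_+$, while $\pi_k\cong K_k(A)$ for $k\geq 1$ because the reduced group $\widetilde{E_A^0}(S^k)$ consists of the units of $K_0(C(S^k)\otimes A)$ restricting to $1$ at the basepoint, a group that is additively isomorphic to $K_k(A)$. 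It then remains to check that the comparison map is the identity on $K_k(A)$ for $k\geq 1$ and the inclusion $K_0(A)^{\times}_+\hookrightarrow K_0(A)^{\times}$ on $\pi_0$. Both reduce to the fact that the action sends the class of an automorphism to the invertible element it induces on $K_0(C(X)\otimes A)$, together with naturality under suspension and Bott periodicity.

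For (c) I would apply (b) to $A' = A\otimes\Cuntz{\infty}$, which is again separable, strongly self-absorbing, and not isomorphic to $\C$. Since $A'$ is purely infinite, its ordered $K_0$-group carries no order obstruction and $K_0(A')^{\times}_+ = K_0(A')^{\times}$; hence the $\pi_0$-map of (b) is now also an isomorphism, making $\Gamma(G_{A'})\to\Gamma(\units{KU^{A'}})$ an equivalence of connective spectra. Because $\Cuntz{\infty}$ lies in the UCT class with $K_0 = \Z$ and $K_1 = 0$ generated by the unit, it is $KK$-equivalent to $\C$, so $KU^{A'}\simeq KU^A$ and $\units{KU^{A'}}\simeq\units{KU^A}$. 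Delooping via Theorem \ref{thm:EHI_group}, which presents $B_{\mu}\Gamma(G_{A'})$ as $\Gamma(B_{\nu}G_{A'})$ with first space $B\Aut{A'\otimes\K} = B\Aut{A\otimes\Cuntz{\infty}\otimes\K}$, yields $B\Aut{A\otimes\Cuntz{\infty}\otimes\K}\simeq BGL_1(KU^A)$. The isomorphism of groups follows since $[X, B\Aut{A\otimes\Cuntz{\infty}\otimes\K}]$ classifies locally trivial $A\otimes\Cuntz{\infty}\otimes\K$-bundles under tensor product; specializing to an $A$ with $KU^A\simeq KU$ (for instance $A = \Cuntz{\infty}$) gives $gl_1(KU)^1(X)\cong\Bun{A\otimes\Cuntz{\infty}\otimes\K}$.

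The main obstacle I anticipate is part (b): although the homotopy groups of each spectrum are available from \cite{paper:DadarlatP1} and standard unit-spectrum theory, showing that the map induced by the action realizes precisely the inclusion of positive units on $\pi_0$ and an isomorphism in every positive degree requires matching the operator-algebraic action of automorphisms on $K$-theory with the homotopy-theoretic unit spectrum. This forces one to verify the convergence and CW-homotopy-type hypotheses of Theorem \ref{thm:EHI_group} for $G_A$ and to track the comparison map carefully through the $\Gamma$-space formalism, which is where the substantive work lies.
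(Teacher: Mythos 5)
Your proposal is correct and follows essentially the same route as the paper: the same spectrum model and action (Theorems \ref{thm:KUA_spectrum} and \ref{thm:HigherTwists}), the same reduction of (b) to the single map $\Aut{A \otimes \K} \to \Omega KU^A_1$, $g \mapsto g \cdot \widehat{\eta}_1$, analyzed exactly as you indicate by factoring through $g \mapsto g(1 \otimes e) \in \Proj{A \otimes \K}^{\times}$ (a homotopy equivalence by \cite{paper:DadarlatP1}) followed by a Bott periodicity map, and the same proof of (c) by applying (b) to the purely infinite algebra $A \otimes \Cuntz{\infty}$ together with the equivalence $KU^A \simeq KU^{A \otimes \Cuntz{\infty}}$ induced by the unit map. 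The technical steps you explicitly defer --- $\Sigma_n$-equivariance and multiplicativity of the action via the shuffle isomorphisms $\theta_n$, stability and compatible inverses of $G_A$ from the contractibility of $\Aut{A}$, and the identification $\pi_k(\Omega KU^A_1, \widehat{\eta}_1) \cong K_k(A)$ via Trout's $KK$-theoretic description --- are precisely what Theorems \ref{thm:A_is_EHI} and \ref{thm:HigherTwists} supply.
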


We also compare the spectrum defined by the $\Gamma$-space $\Gamma(B_{\nu}G_A)$ with the one obtained from the infinite loop space construction used in \cite{paper:DadarlatP1} and show that they are equivalent. The group $gl_1(KU^A)^1(X)$ alias $E_A^1(X)$ is a natural receptacle for invariants of not necessarily locally trivial continuous fields of $C^*$-algebras with stable strongly self-absorbing fibers that satisfy a Fell condition. This provides a substantial extension of results by Dixmier and Douady with $gl_1(KU^A)$ replacing ordinary cohomology, \cite{paper:DadarlatP1}.
The above theorem lays the ground for an operator algebraic interpretation of the ``higher'' twists of $K$-theory. Twisted $K$-theory as defined first by Donovan and Karoubi \cite{paper:DonovanKaroubi} and later in increased generality by Rosenberg \cite{paper:Rosenberg} and Atiyah and Segal \cite{paper:AtiyahSegal} has a nice interpretation in terms of bundles of compact operators \cite{paper:Rosenberg}. From the point of view of homotopy theory, it is possible to define twisted $K$-theory with more than just the $K(\Z,3)$-twists \cite{paper:Ando, book:MaySigurdsson} and the present paper suggests an interpretation of these more general invariants in terms of bundles with fiber $\Cuntz{\infty}\otimes \K$. We will pursue this idea in upcoming work.\\ 

\paragraph{\it Acknowledgements}
The second named author would like to thank Johannes Ebert for many useful discussions and Tyler Lawson for an answer to his question on mathoverflow.net. The first named author would like to thank Jim McClure for a helpful
discussion on localization of spectra. 
 
\section{Preliminaries}

\subsection{Symmetric ring spectra, units and $\I$-spaces}
Since our exposition below is based on symmetric ring spectra and their units, we will recall their definition in this section. The standard references for this material are \cite{paper:HoveyShipleySmith} and \cite{paper:MandellMaySchwedeShipley}. Let $\Sigma_n$ be the symmetric group on $n$ letters and let $S^n = S^1 \wedge \dots \wedge S^1$ be the smash product of $n$ circles. This space carries a canonical $\Sigma_n$-action. Define $S^0$ to be the two-point space. Let $\mathcal{T}op$ be the category of compactly generated Hausdorff spaces and denote by $\mathcal{T}op_*$ its pointed counterpart. 

\begin{definition}\label{def:symmetric_spectrum}
A \emph{commutative symmetric ring spectrum} $R_{\bullet}$ consists of a sequence of pointed topological spaces $R_n$ for $n \in \N_0 = \{0,1,2, \dots\}$ with a basepoint preserving action by $\Sigma_n$ together with a sequence of pointed equivariant maps $\eta_n \colon S^n \to R_n$ and a collection $\mu_{m,n}$ of pointed $\Sigma_m \times \Sigma_n$-equivariant maps 
\[
	\mu_{m,n} \colon R_m \wedge R_n \to R_{m+n}
\]
such that the following conditions hold:
\begin{enumerate}[(a)]
	\item associativity: $\mu_{p+q,r} \circ (\mu_{p,q} \wedge \id{R_r}) = \mu_{p,q+r} \circ (\id{R_p} \wedge \mu_{q,r})$,
	\item compatibility: $\mu_{p,q} \circ (\eta_p \wedge \eta_q) = \eta_{p+q}$
	\item commutativity: The following diagram commutes
	\[
		\xymatrix{
			R_m \wedge R_n \ar[r]^-{\mu_{m,n}} \ar[d]_-{\rm tw} & R_{m+n} \ar[d]^-{\tau_{m,n}} \\
			R_n \wedge R_m \ar[r]_-{\mu_{n,m}} & R_{n+m}
		}
	\]
	where ${\rm tw}$ is the flip map and $\tau_{m,n}$ is the block permutation exchanging the first $m$ letters with the last $n$ letters preserving their order. 
\end{enumerate}
\end{definition}

In order to talk about units in a symmetric ring spectrum with respect to its graded multiplication $\mu_{m,n} \colon R_m \wedge R_n \to R_{m+n}$ we need to deal with homotopy colimits. Given a small (discrete) indexing category $\mathcal{J}$ and a functor $F \colon \mathcal{J} \to \mathcal{T}op$, i.e.\ a diagram in spaces, we define its \emph{homotopy colimit} $\hocolim_{\mathcal{J}} F$ to be the geometric realization of its (topological) \emph{transport category} $\mathcal{T}_F$ with object space $\coprod_{j \in \obj(\mathcal{J})} F(j)$ and morphism space $\coprod_{j,j' \in \obj(\mathcal{J})} F(j) \times \hom_{\mathcal{J}}(j,j')$, where the source map is given by the projection to the first factor and the value of the target map on a morphism $(x,f) \in F(j) \times \hom_{\mathcal{J}}(j,j')$ is $F(f)(x)$ \cite[Proposition 5.7]{paper:Weiss}. 

%Observe that $\hocolim$ is functorial in $F$ in the following sense: Suppose $F,G \colon \mathcal{J} \to \mathcal{T}op$ are two diagrams and $\eta \colon F \to G$ is a natural transformation between them. $\eta$ induces a functor $\mathcal{T}_F \to \mathcal{T}_G$ sending $(x,f) \in F(j) \times \hom_{\mathcal{J}}(j,j')$ to $(\eta_j(x),f) \in G(j) \times \hom_{\mathcal{J}}(j,j')$ and therefore yields a continuous map $\hocolim_{\mathcal{J}} F \to \hocolim_{\mathcal{J}} G$. Moreover, for two indexing categories $\mathcal{J}$ and $\mathcal{J}'$, a functor $K \colon \mathcal{J} \to \mathcal{J}'$ and another functor $G \colon \mathcal{J}' \to \mathcal{T}op$, there is an induced functor $\mathcal{T}_{G \circ K} \to \mathcal{T}_{G}$, which maps the subspace $G(K(j)) \subset \obj(\mathcal{T}_{G \circ K})$ to the corresponding copy in $\obj(\mathcal{T}_G)$ and applies $K$ to $\hom_{\mathcal{J}}(j,j')$. This yields a continuous map $\hocolim_{\mathcal{J}} (G \circ K) \to \hocolim_{\mathcal{J}'} G$.

To define inverses for the graded multiplication $\mu$ of $R$, we need a bookkeeping device that keeps track of the degree, i.e.\ the suspension coordinate. We follow the work of Sagave and Schlichtkrull \cite{paper:SagaveSchlichtkrull, paper:Schlichtkrull}, in particular \cite[section 2.2]{paper:Schlichtkrull}, which tackles this issue using $\I$-spaces and $\I$-monoids.

\begin{definition} \label{def:ISpace_IMonoid} % I,Robot?
Let $\I$ be the category, whose objects are the finite sets $\mathbf{n} = \{1, \dots, n\}$ and whose morphisms are injective maps. The empty set $\mathbf{0}$ is an initial object in this category. Concatenation $\mathbf{m} \sqcup \mathbf{n}$ and the block permutations $\tau_{m,n} \colon \mathbf{m} \sqcup \mathbf{n} \to \mathbf{n} \sqcup \mathbf{m}$ turn $\I$ into a symmetric monoidal category. An \emph{$\I$-space} is a functor $X \colon \I \to \mathcal{T}op_{\ast}$.

Moreover, an $\I$-space $X$ is called an \emph{$\I$-monoid} if it comes equipped with a natural transformation $\mu \colon X \times X \to X \circ \sqcup$ of functors $\I^2 \to \mathcal{T}op_{\ast}$, i.e.\ a family of continuous maps 
\[
	\mu_{m,n} \colon X(\mathbf{m}) \times X(\mathbf{n}) \to X(\mathbf{m} \sqcup \mathbf{n})\ ,
\]
which is associative in the sense that $\mu_{l,m+n} \circ (\id{X(\mathbf{l})} \times \mu_{m,n}) = \mu_{l+m, n} \circ (\mu_{l,m} \times \id{X(\mathbf{n})})$ for all $\mathbf{l}, \mathbf{m}, \mathbf{n} \in {\rm obj}(\I)$ and unital in the sense that the diagrams
\[
	\xymatrix{
		X(\mathbf{0}) \times X(\mathbf{n}) \ar[r]^-{\mu_{0,n}} & X(\mathbf{n}) \\
		X(\mathbf{n}) \ar[u] \ar[ur]_{\id{X(\mathbf{n})}}
	}
	\qquad
	\xymatrix{
		X(\mathbf{n}) \times X(\mathbf{0}) \ar[r]^-{\mu_{n,0}} & X(\mathbf{n}) \\
		X(\mathbf{n}) \ar[u] \ar[ur]_{\id{X(\mathbf{n})}}
	}
\]
commute for every $\mathbf{n} \in {\rm obj}(\I)$, where the two upwards arrows are the inclusion with respect to the basepoint in $X(\mathbf{0})$. Likewise we call an $\I$-monoid $X$ \emph{commutative}, if 
\[
	\xymatrix{
		X(\mathbf{m}) \times X(\mathbf{n}) \ar[d]_-{\text{sw}} \ar[r]^-{\mu_{m,n}} & X(\mathbf{m} \sqcup \mathbf{n}) \ar[d]^{\tau_{m,n*}} \\
		X(\mathbf{n}) \times X(\mathbf{m}) \ar[r]^-{\mu_{n,m}} & X(\mathbf{n} \sqcup \mathbf{m})
	}
\]
commutes. We denote the homotopy colimit of $X$ over $\I$ by $X_{h\I} = \hocolim_{\I}(X)$. If $X$ is an $\I$-monoid, then $(X_{h\I},\mu)$ is a topological monoid as explained in \cite[p.\ 652]{paper:Schlichtkrull}. We call $X$ \emph{grouplike}, if $\pi_0(X_{h\I})$ is a group with respect to the multiplication induced by the monoid structure. 
\end{definition}

Let $\mathcal{N}$ be the category associated to the directed poset $\N_0=\{0,1,2,...\}$. Note that there is an inclusion functor $\mathcal{N} \to \I$, which sends a map $n \to m$ to the standard inclusion $\mathbf{n} \to \mathbf{m}$. This way, we can associate a space called the \emph{telescope} to an $\I$-space $X$: $\Tel(X) := \hocolim_{\mathcal{N}} X$. If $X$ is convergent, then $\Tel(X) \to X_{h\I}$ is a weak homotopy equivalence. Any space $Y$ together with a continuous self-map $f \colon Y \to Y$ yields a functor $F \colon \mathcal{N} \to \mathcal{T}op$ with $F(n) = Y$, $F(n \to m) = f^{(m-n)}$, where $f^{(m-n)}$ denotes the composition of $m-n$ copies of $f$. We will denote the associated telescope by $\Tel(Y;f)$ or $\Tel(Y)$ if the map is clear.

As described in \cite[section 2.3]{paper:Schlichtkrull}, a symmetric ring spectrum $R_{\bullet}$ yields an $\I$-monoid $\Omega^{\infty}(R)$ as follows: Let $\Omega^{\infty}(R)(\mathbf{n}) = \Omega^n(R_n)$ with basepoint $\eta_n$. A morphism $\alpha \colon \mathbf{m} \to \mathbf{n}$ uniquely defines a permutation $\bar{\alpha} \colon \mathbf{n} = \mathbf{l} \sqcup \mathbf{m} \to \mathbf{n}$, which is order preserving on the first $l$ elements and given by $\alpha$ on the last $m$ entries. Mapping $f \in \Omega^m(R_m)$ to 
\[
	\xymatrix{
	S^n  \ar[r]^-{\bar{\alpha}^{-1}} & S^n  \ar[r]^-{\eta_l \wedge f} & R_l \wedge R_m \ar[r]^-{\mu_{l,m}} & R_{n} \ar[r]^-{\bar{\alpha}} & R_n\ .
	}
\]
yields the functoriality with respect to injective maps. The monoid structure is induced by the multiplication of $R_{\bullet}$ as follows
\[
	\mu_{m,n}(f,g) \colon 
	\xymatrix{
	S^m \wedge S^n \ar[r]^-{f \wedge g} & R_m \wedge R_n \ar[r]^-{\mu_{m,n}} & R_{m+n}
	}
\]
for $f \in \Omega^m(R_m)$ and $g \in \Omega^n(R_n)$. If $R$ is commutative, then $\Omega^{\infty}(R)$ is a commutative $\I$-monoid. 

Let $\units{R}$ be the $\I$-monoid of units of $R$ given as follows $\units{R}(\mathbf{n})$ is the union of those components of $\Omega^{\infty}(R)(\mathbf{n})$ that have stable inverses in the sense that for each $f \in \units{R}(\mathbf{n})$ there exists $g \in \Omega^{\infty}(R)(\mathbf{m})$, such that $\mu_{n,m}(f,g)$ and $\mu_{m,n}(g,f)$ are homotopic to the basepoint of $\Omega^{\infty}(R)(\mathbf{n} \sqcup \mathbf{m})$ and  $\Omega^{\infty}(R)(\mathbf{m} \sqcup \mathbf{n})$ respectively. Define the space of units by
\(
	GL_1(R) = \hocolim_{\I}(\units{R})
\).

If $R$ is commutative, the spectrum of units associated to the $\Gamma$-space $\Gamma(\units{R})$ will be denoted by $gl_1(R)$. If $R$ is convergent, then $\pi_0((\units{R})_{h\I}) = GL_1(\pi_0(R))$, $\Gamma(\units{R})$ is very special and $gl_1(R)$ is an $\Omega$-spectrum.

\section{Eckmann-Hilton $\I$-groups}
As motivated in the introduction, we study the following particularly nice class of $\I$-monoids.

\begin{definition} \label{def:EHI_group}
Let $\mathcal{G}rp_{\ast}$ be the category of topological groups (which we assume to be \emph{well-pointed} by the identity element) and continuous homomorphisms. A functor $G \colon \I \to \mathcal{G}rp_{\ast}$ is called an \emph{$\I$-group}. An $\I$-group $G$ is called an \emph{Eckmann-Hilton $\I$-group} (or EH-$\I$-group for short) if it is an $\I$-monoid in $\mathcal{G}rp_*$ with multiplication $\mu_{m,n}$, such that the following diagram of natural transformations between functors $\I^2 \to \mathcal{G}rp_{\ast}$ commutes, 
\[
	\xymatrix{
		G(\mathbf{m}) \times G(\mathbf{m}) \times G(\mathbf{n}) \times G(\mathbf{n}) \ar[rr]^-{(\mu_{m,n} \times \mu_{m,n}) \circ \tau} \ar[d]_-{\nu_{m} \times \nu_n} && G(\mathbf{m} \sqcup \mathbf{n}) \times G(\mathbf{m} \sqcup \mathbf{n}) \ar[d]^-{\nu_{m+n}} \\
		G(\mathbf{m}) \times G(\mathbf{n}) \ar[rr]_-{\mu_{m,n}} && G(\mathbf{m} \sqcup \mathbf{n})
	}
\]
(where $\nu_n \colon G(\mathbf{n}) \times G(\mathbf{n}) \to G(\mathbf{n})$ denotes the group multiplication and $\tau$ is the map that switches the two innermost factors). We call $G$ \emph{convergent}, if it is convergent as an $\I$-space in the sense of \cite[section 2.2]{paper:Schlichtkrull}. If all morphisms in $\I$ except for the maps $\mathbf{0} \to \mathbf{n}$ are mapped to homotopy equivalences, the EH-$\I$-group $G$ is called \emph{stable} (this implies convergence).

Let $\iota_m \colon \mathbf{0} \to \mathbf{m}$ be the unique morphism. We say that an EH-$\I$-group has \emph{compatible inverses}, if there is a path from $(\iota_m \sqcup \id{\mathbf{m}})_*(g) \in G(\mathbf{m} \sqcup \mathbf{m})$ to $(\id{\mathbf{m}} \sqcup \iota_m)_*(g)$ for all $\mathbf{m} \in \obj(\I)$ and $g \in G(\mathbf{m})$.
\end{definition}

The above diagram is easily recognized as a graded version of the Eckmann-Hilton compatibility condition, where the group multiplication and the monoid structure provide the two operations. Thus, the following comes as no surprise.

\begin{lemma}\label{lem:commutative}
Let $G$ be an EH-$\I$-group. Then the $\I$-monoid structure of $G$ is commutative. 
\end{lemma}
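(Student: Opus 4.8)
We must show that for an EH-$\I$-group $G$, the $\I$-monoid multiplication $\mu$ is commutative, i.e.\ the square

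$$\begin{array}{ccc}
G(\mathbf{m}) \times G(\mathbf{n}) & \to & G(\mathbf{m} \sqcup \mathbf{n}) \\
\downarrow & & \downarrow \\
G(\mathbf{n}) \times G(\mathbf{m}) & \to & G(\mathbf{n} \sqcup \mathbf{m})
\end{array}$$

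commutes (sw on the left, $\tau_{m,n*}$ on the right).

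The key tool is the Eckmann-Hilton diagram from the definition, which relates $\mu$ and the group multiplication $\nu$. The classic Eckmann-Hilton argument: with two compatible unital operations on a set, both agree and are commutative. Here we have a graded version, so I need to be careful.

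**Plan.**

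The strategy is a direct Eckmann-Hilton computation. First I would identify the units. The group multiplications $\nu_n$ have identity elements $e_n \in G(\mathbf{n})$ (the groups are well-pointed by their identity). The $\I$-monoid $\mu$ has its own unit behavior via $X(\mathbf{0})$. I need to establish how $\mu$ and $\nu$ interact with these identities — in particular that $\mu_{m,n}(e_m, e_n) = e_{m+n}$ (this should follow from the EH-diagram applied to identity elements, or from the compatibility of the natural transformation $\mu$ with group structure, since each $\mu_{m,n}$ is a group homomorphism $G(\mathbf{m}) \times G(\mathbf{n}) \to G(\mathbf{m} \sqcup \mathbf{n})$ — this is what "$\I$-monoid in $\mathcal{G}rp_*$" means).

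Next comes the main computation. Take $g \in G(\mathbf{m})$ and $h \in G(\mathbf{n})$. The EH-diagram, evaluated on the tuple $(g, e_m, e_n, h)$ (exploiting that $\tau$ switches the two innermost factors), should yield on one composite $\mu_{m,n}(g,h)$ and on the other composite an expression of the form $\nu_{m+n}(\mu_{m,n}(g, e_n), \mu_{m,n}(e_m, h))$ — i.e.\ $\mu$ computed "through" $\nu$. Symmetrically, feeding in $(e_m, g, h, e_n)$ interchanges the roles and produces $\mu_{m,n}(g,h)$ equated to $\nu_{m+n}(\mu_{m,n}(e_m,h), \mu_{m,n}(g,e_n))$ after using that $\mu$ is a homomorphism in each slot. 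Combining these two (together with the fact that the two $\mu$-images commute as elements of the group $G(\mathbf{m} \sqcup \mathbf{n})$, which itself falls out of the interchange) realizes the standard Eckmann-Hilton chain of equalities. The only subtlety is that $\mu$ is \emph{graded}: the two operations do not live on the same set, so I cannot literally conclude $\mu = \nu$. Instead the payoff is the commutativity relation comparing $\mu_{m,n}$ with $\mu_{n,m}$ after applying the block permutation $\tau_{m,n}$.

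**The main obstacle.** The hard part is bookkeeping the grading — tracking where the block-permutation $\tau_{m,n*}$ enters so that the conclusion is precisely the commutativity square above rather than a strict equality of operations on a single object. I expect this to require using naturality of $\mu$ with respect to the morphism $\tau_{m,n} \colon \mathbf{m} \sqcup \mathbf{n} \to \mathbf{n} \sqcup \mathbf{m}$ in $\I$, i.e.\ that the functor $G$ sends the symmetry of the monoidal structure on $\I$ to the comparison map on classifying objects. Concretely, after running Eckmann-Hilton to obtain $\mu_{m,n}(g,h) = \nu(\ldots) = \nu(\ldots\text{ swapped }\ldots)$, I would feed the swapped product back through the homomorphism property of $\mu_{n,m}$ and reconcile the indices via $\tau_{m,n*}$, closing the square. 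Everything else — associativity, unitality, the homomorphism identities — is routine diagram-chasing once the identity-element normalizations are in place.
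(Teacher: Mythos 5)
Your proposal is correct and follows essentially the same route as the paper's proof: insert the group units into the Eckmann--Hilton interchange square to express $\mu_{m,n}(g,h)$ as a $\nu_{m+n}$-product of stabilized elements, then reconcile $\mu_{m,n}$ with $\mu_{n,m}$ by applying the functor $G$ to the block permutation $\tau_{m,n}$ and using that $\tau_{m,n*}$ is a group homomorphism. The only point your plan leaves implicit is the identity the paper states first and on which the index bookkeeping rests, namely $\mu_{m,n}(1_m,g) = (\iota_m \sqcup \id{\mathbf{n}})_*(g)$ (from naturality of $\mu$ together with unitality), which is precisely the ``reconciliation'' step your last paragraph calls for.
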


\begin{proof}\label{pf:commutative}
Let $1_m \in G(\mathbf{m})$ be the identity element. If $\iota_{m} \colon \mathbf{0} \to \mathbf{m}$ is the unique morphism, then $\iota_{m*}(1_0) = 1_m$. For $g \in G(\mathbf{n})$
\[
	\mu_{m,n}(1_m,g) = \mu_{m,n}(\iota_{m*}(1_0),g) = (\iota_{m} \sqcup \id{\mathbf{n}})_* \mu_{0,n}(1_0,g) = (\iota_{m} \sqcup \id{\mathbf{n}})_*(g)
\]
by naturality. Let $g \in G(\mathbf{n})$, $h \in G(\mathbf{m})$, then 
\begin{align*}
	\mu_{m,n}(g,h) & = \mu_{m,n}(\nu_m(1_m,g), \nu_n(h,1_n)) = \nu_{m+n}(\mu_{m,n}(1_m, g), \mu_{m,n}(h,1_n)) \\
	& = \nu_{m+n}((\iota_{m} \sqcup \id{\mathbf{n}})_*(g), (\id{\mathbf{m}} \sqcup \iota_n)_*(h)) \\
	& = \nu_{m+n}(\tau_{m,n*}(\id{\mathbf{n}} \sqcup \iota_{m})_*(g), \tau_{m,n*}(\iota_n \sqcup \id{\mathbf{m}})_*(h)) \\
	& = \tau_{m,n*} \nu_{n+m}((\id{\mathbf{n}} \sqcup \iota_{m})_*(g), (\iota_n \sqcup \id{\mathbf{m}})_*(h)) = \tau_{m,n*} \mu_{n,m}(h,g)
\end{align*}
In the last step we have used the fact that $\tau_{m,n*}$ is a group homomorphism.
\end{proof}

\begin{lemma} \label{lem:h_inverse}
Let $G$ be an EH-$\I$-group with compatible inverses, let $g \in G(\mathbf{m})$, then there is a path connecting $\mu_{m,m}(g,g^{-1}) \in G(\mathbf{m} \sqcup \mathbf{m})$ and $1_{m \sqcup m} \in G(\mathbf{m} \sqcup \mathbf{m})$.
\end{lemma}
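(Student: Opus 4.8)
The plan is to compute $\mu_{m,m}(g,g^{-1})$ directly from the Eckmann-Hilton relation and then to slide the result to the identity along the path furnished by compatible inverses.

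First I would write each argument trivially as a group product, $g = \nu_m(g,1_m)$ and $g^{-1} = \nu_m(1_m,g^{-1})$, and feed the quadruple $(g,1_m,1_m,g^{-1})$ into the defining diagram of Definition \ref{def:EHI_group} with $n=m$. Tracing the two ways around the square gives
\[
	\mu_{m,m}(g,g^{-1}) = \nu_{2m}\bigl(\mu_{m,m}(g,1_m),\ \mu_{m,m}(1_m,g^{-1})\bigr).
\]
The naturality identities already extracted in the proof of Lemma \ref{lem:commutative}, namely $\mu_{m,m}(h,1_m) = (\id{\mathbf{m}} \sqcup \iota_m)_*(h)$ and $\mu_{m,m}(1_m,h) = (\iota_m \sqcup \id{\mathbf{m}})_*(h)$, then let me rewrite this as
\[
	\mu_{m,m}(g,g^{-1}) = \nu_{2m}\bigl((\id{\mathbf{m}} \sqcup \iota_m)_*(g),\ (\iota_m \sqcup \id{\mathbf{m}})_*(g^{-1})\bigr).
\]

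Next I would apply compatible inverses, but to the element $g^{-1}$: the hypothesis supplies a path in $G(\mathbf{m} \sqcup \mathbf{m})$ from $(\iota_m \sqcup \id{\mathbf{m}})_*(g^{-1})$ to $(\id{\mathbf{m}} \sqcup \iota_m)_*(g^{-1})$. Because $(\id{\mathbf{m}} \sqcup \iota_m)_*$ is a group homomorphism, the endpoint equals $\bigl[(\id{\mathbf{m}} \sqcup \iota_m)_*(g)\bigr]^{-1}$. Holding the first slot of the continuous multiplication $\nu_{2m}$ fixed at $x := (\id{\mathbf{m}} \sqcup \iota_m)_*(g)$ and letting the second slot traverse this path yields a path from $\mu_{m,m}(g,g^{-1})$ to $\nu_{2m}(x, x^{-1}) = 1_{m \sqcup m}$, which is exactly the asserted path.

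The argument is formal once the Eckmann-Hilton relation and the two naturality identities are in hand; the only point that needs care is that compatibility of inverses has to be invoked for $g^{-1}$ and then combined with the homomorphism property of $(\id{\mathbf{m}} \sqcup \iota_m)_*$, so that the two entries of $\nu_{2m}$ become genuine group inverses of each other. I do not anticipate any real obstacle beyond this bookkeeping.
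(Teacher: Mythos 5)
Your proof is correct and takes essentially the same route as the paper: use the Eckmann--Hilton relation together with the naturality identities from Lemma \ref{lem:commutative} to write $\mu_{m,m}(g,g^{-1})$ as $\nu_{2m}$ applied to two stabilized factors, then slide one factor along the compatible-inverses path for $g^{-1}$ and invoke the homomorphism property of the stabilization map to reach $1_{m \sqcup m}$. The only difference is bookkeeping: the paper moves the $g^{-1}$-factor into the image of $(\iota_m \sqcup \id{\mathbf{m}})_*$ and collapses the product inside that homomorphism, whereas you move it into the image of $(\id{\mathbf{m}} \sqcup \iota_m)_*$ so the two entries of $\nu_{2m}$ become genuine group inverses.
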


\begin{proof}\label{pf:h_inverse}
Just as in the proof of Lemma \ref{lem:commutative} we see that
\[
\mu_{m,m}(g,g^{-1}) = \nu_{m+m}((\iota_{m} \sqcup \id{\mathbf{m}})_*(g), (\id{\mathbf{m}} \sqcup \iota_m)_*(g^{-1}))\ .
\]
But by assumption $(\id{\mathbf{m}} \sqcup \iota_m)_*(g^{-1})$ is homotopic to $(\iota_m \sqcup \id{\mathbf{m}})_*(g^{-1})$. We get
\[
	\nu_{m+m}((\iota_{m} \sqcup \id{\mathbf{m}})_*(g), (\iota_m \sqcup \id{\mathbf{m}})_*(g^{-1})) = (\iota_{m} \sqcup \id{\mathbf{m}})_* (\nu_{m}(g,g^{-1})) = (\iota_{m} \sqcup \id{\mathbf{m}})_*(1_m)
\]
proving the claim.
\end{proof}

Let $G$ be an EH-$\I$-group and let $\Delta_r \colon \I \to \I^r$ be the multidiagonal functor that maps $\mathbf{n}$ to $(\mathbf{n}, \dots, \mathbf{n}) \in \obj(\I^r)$. Then we obtain an $\I$-monoid $G^{(r)}$ from this via
\[
	G^{(r)}(\mathbf{n}) = \underbrace{(G \times \dots \times G)}_{r \text{ times}} \circ \Delta_r(\mathbf{n}) = G(\mathbf{n})^r\ ,
\] 
where we define $G^{(0)}(\mathbf{n})$ to be the trivial group and $G^{(1)}(\mathbf{n}) = G(\mathbf{n})$. The multiplication is given by 
\[
	\mu^{(r)}_{m,n} \colon G(\mathbf{m})^r \times G(\mathbf{n})^r \to (G(\mathbf{m}) \times G(\mathbf{n}))^r \to G(\mathbf{m} \sqcup \mathbf{n})^r
\]
where the first map is reshuffling the factors in an order-preserving way and the second map is $\mu_{m,n} = \mu^{(1)}_{m,n}$. We can rephrase the fact that $\varphi_* \colon G(\mathbf{m}) \to G(\mathbf{n})$ is a group homomorphism for every $\varphi \in \mor(\I)$ by saying that $\nu_{\bullet} \colon (G \times G) \circ \Delta_2 \to G$ is a natural transformation. Thus, we obtain face maps of the form
\[
	d_{i,n} \colon G^{(r)}(\mathbf{n}) \to G^{(r-1)}(\mathbf{n}) \quad ; \quad (g_1, \dots, g_r) \mapsto \begin{cases}
	(g_2, \dots, g_r) & \text{if } i = 0 \\
	(g_1, \dots, \nu_n(g_i, g_{i+1}), \dots, g_r) & \text{if } 0 < i < r \\
	(g_1, \dots, g_{r-1}) & \text{if } i = r 
\end{cases}
\]
and corresponding degeneracy maps $s_{i,n} \colon G^{(r)} \to G^{(r+1)}$, which insert the identity of $G(\mathbf{n})$ after the $i$th element. Altogether, we see that $G^{(r)}$ is a simplicial $\I$-space. If we fix $\mathbf{n}$, then $G^{(\bullet)}(\mathbf{n})$ is a simplicial space. Its geometric realization is the classifying space of the group $G(\mathbf{n})$, which we will denote by $B_{\nu}G(\mathbf{n})$. Any morphism $\varphi \colon \mathbf{m} \to \mathbf{n}$ in $\I$ induces a simplicial map $G^{(\bullet)}(\mathbf{m}) \to G^{(\bullet)}(\mathbf{n})$ and therefore a map on the corresponding classifying spaces: $B_{\nu}G(\mathbf{m}) \to B_{\nu}G(\mathbf{n})$. This way $\mathbf{n} \mapsto B_{\nu}G(\mathbf{n})$ becomes an $\I$-space, of which we can form the homotopy colimit $(B_{\nu}G)_{h\I}$. 

Alternatively, we can first form the homotopy colimit of the $r$-simplices to obtain the simplicial space $G^{(r)}_{h\I} = \hocolim_{\I} G^{(r)}$. Let $B_{\nu}(G_{h\I}) := \lvert G^{(\bullet)}_{h\I} \rvert$. As the notation already suggests there is not much of a difference between these two spaces. 

\begin{lemma} \label{lem:commutes}
Let $\mathcal{J}$ be a small (discrete) category, let $X$ be a simplicial $\mathcal{J}$-space, then $c \mapsto \lvert X^{(\bullet)}(c) \rvert$ is a $\mathcal{J}$-space, $r \mapsto \hocolim_{\mathcal{J}} X^{(r)}$ is a simplicial space and there is a homeomorphism
$
	\lvert \hocolim_{\mathcal{J}} X^{(\bullet)} \rvert \cong \hocolim_{\mathcal{J}} \lvert X^{(\bullet)} \rvert \ .
$
In particular, we have $(B_{\nu}G)_{h\I} \cong B_{\nu}(G_{h\I}) =: B_{\nu}G_{h\I}$. Given another category $\mathcal{J}'$, a functor $F \colon \mathcal{J} \to \mathcal{J}'$, a simplicial $\mathcal{J}'$-space $X'$ and a natural transformation $\kappa \colon X \Rightarrow X' \circ F$, the following diagram, in which the vertical maps are induced by $F$ and $\kappa$, commutes
\[
	\xymatrix{
		\lvert \hocolim_{\mathcal{J}} X^{(\bullet)} \rvert \ar[rr]^-{\cong} \ar[d] && \hocolim_{\mathcal{J}} \lvert X^{(\bullet)} \rvert \ar[d] \\
		\lvert \hocolim_{\mathcal{J}'} X'^{(\bullet)} \rvert \ar[rr]_-{\cong} && \hocolim_{\mathcal{J}'} \lvert X'^{(\bullet)} \rvert
	}
\] 
\end{lemma}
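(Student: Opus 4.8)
The plan is to recognise both sides of the asserted homeomorphism as the two iterated geometric realizations of one and the same bisimplicial space, and then to invoke the standard fact that such iterated realizations agree, both being naturally homeomorphic to the realization of the diagonal. Unwinding the definition of $\hocolim_{\mathcal{J}}$ as the realization of the nerve of the transport category, the degree-$p$ part of the nerve of $\mathcal{T}_{X^{(r)}}$ is
\[
	Z_{p,r} = \coprod_{c_0 \to \cdots \to c_p} X^{(r)}(c_0),
\]
the coproduct running over the set of $p$-chains of morphisms in $\mathcal{J}$. As $p$ and $r$ vary this is a bisimplicial space: in the $p$-direction the structure maps are the nerve face and degeneracy maps of $\mathcal{T}_{X^{(r)}}$, which invoke the target map $X^{(r)}(f)$ of the transport category, while in the $r$-direction they are the simplicial structure maps of $X$ applied fibrewise. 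The two directions commute precisely because $X^{(\bullet)}$ is a simplicial object in $\mathcal{J}$-spaces, i.e.\ each $X^{(r)}(f)$ is a morphism of $\mathcal{J}$-spaces that is natural in $r$.

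First I would dispose of the two preliminary assertions: functoriality of $|{-}|$ shows that $c \mapsto |X^{(\bullet)}(c)|$ is a $\mathcal{J}$-space, and functoriality of $\hocolim_{\mathcal{J}}$ shows that $r \mapsto \hocolim_{\mathcal{J}} X^{(r)}$ is a simplicial space. Next I would identify the two sides with the iterated realizations of $Z$. By definition $\hocolim_{\mathcal{J}} X^{(r)}$ is the realization in the $p$-direction of $Z_{\bullet,r}$, so $|\hocolim_{\mathcal{J}} X^{(\bullet)}|$ is the realization in $r$ of $r \mapsto |Z_{\bullet,r}|_{(p)}$, i.e.\ $\bigl|\,|Z|_{(p)}\,\bigr|_{(r)}$. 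For the other side, geometric realization commutes with disjoint unions, whence
\[
	\coprod_{c_0 \to \cdots \to c_p} |X^{(\bullet)}(c_0)| \cong \Bigl| r \mapsto \coprod_{c_0 \to \cdots \to c_p} X^{(r)}(c_0) \Bigr| = |Z_{p,\bullet}|_{(r)},
\]
so that $\hocolim_{\mathcal{J}} |X^{(\bullet)}|$ is the realization in $p$ of $p \mapsto |Z_{p,\bullet}|_{(r)}$, i.e.\ $\bigl|\,|Z|_{(r)}\,\bigr|_{(p)}$.

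The key step is then the standard homeomorphism for bisimplicial spaces: both iterated realizations are naturally homeomorphic to the realization of the diagonal $n \mapsto Z_{n,n}$, where the compactly generated topology on $\mathcal{T}op$ ensures that the relevant products and coends behave well. This produces the desired homeomorphism, which by construction is natural in the bisimplicial space $Z$. The ``in particular'' statement is the special case $\mathcal{J} = \I$, $X = G^{(\bullet)}$: here $|X^{(\bullet)}(\mathbf{n})| = B_{\nu}G(\mathbf{n})$ and $\hocolim_{\I} X^{(r)} = G^{(r)}_{h\I}$, so the homeomorphism reads $(B_{\nu}G)_{h\I} \cong B_{\nu}(G_{h\I})$.

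Finally, for the naturality square: a pair $(F,\kappa)$ induces a map of bisimplicial spaces $Z \to Z'$ sending a chain $c_0 \to \cdots \to c_p$ to $F(c_0) \to \cdots \to F(c_p)$ and using $\kappa_{c_0} \colon X^{(r)}(c_0) \to X'^{(r)}(F(c_0))$ on fibres, and this is exactly the map inducing the vertical arrows of the diagram. Since the diagonal homeomorphism is natural in its bisimplicial argument, the square commutes. I expect the main obstacle to be purely in the bookkeeping: verifying that $(F,\kappa)$ indeed yields a well-defined map of bisimplicial spaces, which requires $\kappa$ to be compatible both with the nerve structure (via the target maps $X^{(r)}(f)$) and with the simplicial structure of $X$; and, if one prefers to prove rather than cite the diagonal homeomorphism, carrying out the Fubini-type manipulation of coends that identifies the two orders of realization with the diagonal.
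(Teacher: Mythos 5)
Your proposal is correct and follows essentially the same route as the paper: both identify the two sides as the two iterated realizations of the bisimplicial space given levelwise by the nerve of the transport category of $X^{(r)}$ (your $Z_{p,r}$ is exactly the paper's $N_s\mathcal{C}_X^{(r)}$), invoke the standard homeomorphism between the two orders of realization, and deduce the commuting square from naturality of that homeomorphism in the bisimplicial argument. The only cosmetic difference is that you route the Fubini step through the diagonal, while the paper states the interchange of realizations directly.
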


\begin{proof} \label{pf:commutes}
The first two statements are clear, since the face and degeneracy maps are maps of $\mathcal{J}$-spaces. In particular, the space $X^{(r)}$ of $r$-simplices is a $\mathcal{J}$-space. Let $\mathcal{C}_X^{(r)}$ be the transport category of $X^{(r)}$ and let $N_s \mathcal{C}_X^{(r)}$ be the $s$th space of the nerve of $\mathcal{C}_X^{(r)}$. This is a bisimplicial space and we have
\[
	\lvert \hocolim_{\mathcal{J}} X^{(\bullet)} \rvert = \lvert \lvert N_s \mathcal{C}_X^{(r)} \rvert_s \rvert_r \cong \lvert \lvert N_s \mathcal{C}_X^{(r)} \rvert_r \rvert_s \cong \hocolim_{\mathcal{J}} \lvert X^{\bullet} \rvert\ . 
\]
The functor $F$ in combination with the natural transformation yield a map of bisimplicial spaces $N_s\mathcal{C}_X^{(r)} \to N_s\mathcal{C}_{X'}^{(r)}$. So the last claim follows from the fact that the homeomorphism $\lvert \lvert N_s \mathcal{C}_X^{(r)} \rvert_s \rvert_r \cong \lvert \lvert N_s \mathcal{C}_X^{(r)} \rvert_r \rvert_s$ is natural in the bisimplicial space.
\end{proof}

%%% need to be careful here, without homotopy type of a CW complex just weak equivalence
\begin{lemma}\label{lem:Hocolim_is_BG} 
Let $G$ be a stable EH-$\I$-group, such that $G(\mathbf{n})$ has the homotopy type of a CW-complex for each $\mathbf{n}$. The inclusion map $G(\mathbf{1})^m \to G^{(m)}_{h\I}$ induces a homotopy equivalence for every $m \in \N$. In particular, 
$
	B_{\nu}G(\mathbf{1}) \to B_{\nu}G_{h\I}
$
is a homotopy equivalence.
\end{lemma}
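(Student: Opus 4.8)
The plan is to reduce the homotopy colimit over $\I$ to a mapping telescope, identify that telescope with $G(\mathbf{1})^m$ by exploiting stability, and then pass to classifying spaces by a levelwise argument in the simplicial direction. The first observation is that $G^{(m)}$ is itself a stable $\I$-space: since $G^{(m)}(\mathbf{n}) = G(\mathbf{n})^m$ and a finite product of homotopy equivalences is again a homotopy equivalence, every morphism of $\I$ that is not of the form $\mathbf{0} \to \mathbf{n}$ acts on $G^{(m)}$ through a homotopy equivalence. In particular $G^{(m)}$ is convergent, so by the fact recalled in the preliminaries the canonical map $\Tel(G^{(m)}) = \hocolim_{\mathcal{N}} G^{(m)} \to G^{(m)}_{h\I}$ is a weak homotopy equivalence.

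Next I would analyze the telescope directly. By definition $\Tel(G^{(m)})$ is the mapping telescope of the sequence $G(\mathbf{0})^m \to G(\mathbf{1})^m \to G(\mathbf{2})^m \to \cdots$ along the standard inclusions $\mathbf{n} \hookrightarrow \mathbf{n+1}$. All bonding maps from stage $1$ onward are homotopy equivalences by stability, so the subtelescope over $\{1,2,\dots\}$ deformation retracts onto its bottom stage $G(\mathbf{1})^m$; the full telescope in turn deformation retracts onto this subtelescope by collapsing the front mapping cylinder of the map induced by $\mathbf{0} \to \mathbf{1}$ onto its top. Hence the inclusion $G(\mathbf{1})^m \hookrightarrow \Tel(G^{(m)})$ is a genuine homotopy equivalence, and composing with the weak equivalence to $G^{(m)}_{h\I}$ exhibits the inclusion of the statement as a weak homotopy equivalence. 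Since every $G(\mathbf{n})$ has the homotopy type of a CW-complex, so does each $G(\mathbf{1})^m$ as well as the homotopy colimit $G^{(m)}_{h\I}$ assembled from such spaces; Whitehead's theorem then upgrades the weak equivalence to a homotopy equivalence, proving the first assertion.

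For the ``in particular'' statement I would assemble these equivalences over the simplicial coordinate. The maps $G^{(m)}(\mathbf{1}) \to G^{(m)}_{h\I}$ are compatible with the face and degeneracy maps, hence form a map of simplicial spaces $G^{(\bullet)}(\mathbf{1}) \to G^{(\bullet)}_{h\I}$ whose geometric realizations are $B_{\nu}G(\mathbf{1})$ and $B_{\nu}G_{h\I}$, the latter identification being exactly Lemma~\ref{lem:commutes}. Each simplicial level is a homotopy equivalence by the preceding paragraph, so it remains to realize a levelwise homotopy equivalence. Because $G$ is well-pointed by its identity element, the bar construction $G^{(\bullet)}(\mathbf{1})$ is a good (proper) simplicial space, and the same holds for $G^{(\bullet)}_{h\I}$; realization then preserves the levelwise homotopy equivalence and yields the homotopy equivalence $B_{\nu}G(\mathbf{1}) \to B_{\nu}G_{h\I}$.

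The step I expect to be the main obstacle is maintaining \emph{genuine} homotopy equivalences rather than merely weak ones throughout. The telescope comparison supplies only a weak equivalence, so the CW-homotopy-type hypothesis is indispensable for the Whitehead argument in the second paragraph, and the well-pointedness and properness bookkeeping — in particular verifying that the homotopy-colimit simplicial space $G^{(\bullet)}_{h\I}$ is good so that realization is homotopy-invariant — is the delicate point in the last step.
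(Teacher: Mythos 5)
Your proposal is correct and takes essentially the same route as the paper: the paper's proof simply cites Schlichtkrull's Lemma 2.1 (which is precisely the telescope/convergence comparison you prove by hand, recalled in the preliminaries as ``$\Tel(X) \to X_{h\I}$ is a weak equivalence for convergent $X$'') together with Segal's Proposition A.1(ii) (realization of a levelwise homotopy equivalence of good simplicial spaces is a homotopy equivalence), which are exactly your two steps. The extra details you supply --- stability of $G^{(m)}$, the Whitehead upgrade using the CW-type hypothesis, and the well-pointedness/goodness bookkeeping --- are the content those citations are outsourcing.
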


\begin{proof} \label{pf:Hocolim_is_BG}
Since we assumed all maps $G(\mathbf{m}) \to G(\mathbf{n})$ to be homotopy equivalences for every $m \geq 1$, the Lemma follows from \cite[Lemma 2.1]{paper:Schlichtkrull} together with 
\cite[Proposition A.1 (ii)]{paper:SegalCatAndCoh}.
\end{proof}

Due to the Eckmann-Hilton condition the following diagram commutes
\[
	\xymatrix{
		G^{(r)}(\mathbf{m}) \times G^{(r)}(\mathbf{n}) \ar[r]^-{\mu^{(r)}_{m,n}} \ar[d]_{d_{i,m} \times d_{i,n} } & G^{(r)}(\mathbf{m} \sqcup \mathbf{n}) \ar[d]^-{d_{i,m \sqcup n}} \\
		G^{(r-1)}(\mathbf{m}) \times G^{(r-1)}(\mathbf{n}) \ar[r]_-{\mu^{(r-1)}_{m,n}} & G^{(r-1)}(\mathbf{m} \sqcup \mathbf{n})
	}
\]
Thus, $G^{(\bullet)}$ is a simplicial $\I$-monoid, which is commutative by Lemma \ref{lem:commutative}. This implies that $\mathbf{n} \mapsto B_{\nu}G(\mathbf{n})$ is a commutative $\I$-monoid. Let $B_{\nu}G^{(s)} \colon \I^s \to \mathcal{T}op_{\ast}$ be the $\I^s$-space given by $B_{\nu}G^{(s)}(\mathbf{n}_1, \dots, \mathbf{n}_s) = B_{\nu}G(\mathbf{n}_1) \times \dots \times B_{\nu}G(\mathbf{n}_s)$. As explained in \cite[section 5.2]{paper:Schlichtkrull} there is a $\Gamma$-space $\Gamma(B_{\nu}G)$ associated to $B_{\nu}G$ with
\[
	\Gamma(B_{\nu}G)(S) = \hocolim_{D(S)} B_{\nu}G^{(s)} \circ \pi_S\ ,
\] 
where $\pi_S \colon D(S) \to \I^{s}$ for $S = \{0, \dots, s\}$ is the canonical projection functor. All $G^{(r)}$ are commutative $\I$-monoids. Therefore we have analogous $\Gamma$-spaces $\Gamma(G^{(r)})$ and $\Gamma(G)$. Following \cite[Definition 1.3]{paper:SegalCatAndCoh}, we can deloop $\Gamma(G)$: Let $Y^{(k)}(S) = \Gamma(G)([k] \wedge S)$. This is a simplicial space and 
$
	B_{\mu}\Gamma(G)(S) = \lvert Y^{(\bullet)}(S) \rvert
$
is another $\Gamma$-space. 

In \cite{book:Geometric_App_of_Hom} Bousfield and Friedlander discussed two model category structures on $\Gamma$-spaces: a strict and a stable one. The strict homotopy category of very special $\Gamma$-spaces is equivalent to the stable homotopy category of connective spectra \cite[Theorem 5.1]{book:Geometric_App_of_Hom}. Instead of topological spaces, Bousfield and Friedlander considered (pointed) simplicial sets as a target category. The discussion in \cite[Appendix B]{paper:Schwede_Gamma} shows that this difference is not essential. Recall from \cite[Theorem 3.5]{book:Geometric_App_of_Hom} that a \emph{strict equivalence} between $\Gamma$-spaces $X$ and $Y$ is a natural transformation $f \colon X \to Y$, such that $f_S \colon X(S) \to Y(S)$ is a weak equivalence for every pointed set $S$ with $\lvert S \rvert \geq 2$. This agrees with the notion of strict equivalence given in \cite{paper:Schwede_Gamma}.

\begin{theorem}\label{thm:EHI_group}
Let $G$ be a convergent EH-$\I$-group, such that each $G(\mathbf{n})$ has the homotopy type of a CW-complex. Let $B_{\mu}G_{h\I}$ be the classifying space of the topological monoid $(G_{h\I},\mu)$. There is a strict equivalence of very special $\Gamma$-spaces $B_{\mu}\Gamma(G) \simeq \Gamma(B_{\nu}G)$ inducing a stable equivalence of the induced spectra in the stable homotopy category. 

In particular, $B_{\nu}G_{h\I} \simeq B_{\mu} G_{h\I}$. If $G$ is stable, then $B_{\mu}G_{h\I}$ is a classifying space for principal $G(\mathbf{1})$-bundles and $B_{\nu} G(\mathbf{1})$ is an infinite loop space.
\end{theorem}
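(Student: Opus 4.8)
The plan is to realise both $\Gamma$-spaces as the geometric realisation of one and the same simplicial $\Gamma$-space, namely the one obtained by applying the $\Gamma$-space construction levelwise to the simplicial commutative $\I$-monoid $G^{(\bullet)}$ introduced just before the theorem. Recall that $G^{(\bullet)}$ is the bar construction with respect to $\nu$, so that $\lvert G^{(\bullet)}(\mathbf{n})\rvert = B_{\nu}G(\mathbf{n})$, that each $G^{(r)}$ is a commutative $\I$-monoid by Lemma \ref{lem:commutative}, and that its face and degeneracy maps are maps of $\I$-monoids (this is exactly the Eckmann--Hilton diagram displayed above). Hence $r \mapsto \Gamma(G^{(r)})$ is a simplicial $\Gamma$-space. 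First I would identify its realisation with the right-hand side. Since realisation commutes with finite products, $(B_{\nu}G)^{(s)}(\mathbf{n}_1,\dots,\mathbf{n}_s) = \prod_i \lvert G^{(\bullet)}(\mathbf{n}_i)\rvert \cong \lvert\, r \mapsto (G^{(r)})^{(s)}(\mathbf{n}_1,\dots,\mathbf{n}_s)\,\rvert$, and Lemma \ref{lem:commutes} (applied with $\mathcal{J} = D(S)$) lets me pull the realisation in the simplicial direction $r$ out of the homotopy colimit defining the $\Gamma$-space. This produces a natural homeomorphism $\Gamma(B_{\nu}G)(S) \cong \lvert\, r \mapsto \Gamma(G^{(r)})(S)\,\rvert$.

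Second, and this is the heart of the matter, I would show that the same simplicial $\Gamma$-space models the $\mu$-delooping $B_{\mu}\Gamma(G)$. Under the convergence and CW hypotheses $\Gamma(G)$ is very special, so the Segal maps $\Gamma(G)([k]\wedge S) \to \Gamma(G)(S)^{k}$ are weak equivalences; by definition $B_{\mu}\Gamma(G)$ is the realisation of the simplicial $\Gamma$-space $[k] \mapsto \Gamma(G)([k]\wedge -)$, which is thereby levelwise equivalent to the bar construction $[k]\mapsto \Gamma(G)^{\times k}$ whose faces are built from the $\mu$-multiplication. On the other hand $G^{(k)} = G^{\times k}$ as $\I$-monoids, and using convergence (so that the homotopy colimits may be computed by telescopes, which commute with finite products) the projections induce weak equivalences $\Gamma(G^{(k)}) \to \Gamma(G)^{\times k}$. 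Under these identifications the faces of $r\mapsto\Gamma(G^{(r)})$, which come from the group multiplication $\nu$, must be matched with the $\mu$-bar faces: this is forced precisely by the Eckmann--Hilton condition, which makes $\nu$ and $\mu$ agree up to coherent homotopy after applying $\Gamma$. I expect this matching of the two bar directions — verifying that the $\nu$-faces become the $\mu$-faces under the Segal equivalences, and that the resulting levelwise equivalences are compatible with all faces and degeneracies — to be the main obstacle, as it is exactly where the homotopy-coherent form of the Eckmann--Hilton interchange has to be made rigorous. Properness of the simplicial spaces (guaranteed by the CW and well-pointedness hypotheses) then ensures that realisation preserves these levelwise equivalences, yielding a natural chain of strict equivalences $B_{\mu}\Gamma(G) \simeq \lvert\, r\mapsto\Gamma(G^{(r)})\,\rvert \cong \Gamma(B_{\nu}G)$ on all $S$ with $\lvert S\rvert \geq 2$.

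Next I would record that both $\Gamma$-spaces are very special. The $\Gamma$-space $\Gamma(G)$ is very special, and its delooping $B_{\mu}\Gamma(G)$ is again very special: a bar construction is connected, so $\pi_0$ of its value at $S^0$ is trivially a group, and speciality of a delooping is standard. Very-specialness then transports across the strict equivalence to $\Gamma(B_{\nu}G)$. Finally, a strict equivalence of very special $\Gamma$-spaces is in particular a stable equivalence of the associated connective spectra, by \cite[Theorem 5.1]{book:Geometric_App_of_Hom}, which gives the asserted stable equivalence in the stable homotopy category.

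It remains to deduce the consequences. Evaluating the strict equivalence at $S^0$ and using $\Gamma(G)(S^0) = G_{h\I}$ together with $\Gamma(B_{\nu}G)(S^0) \cong B_{\nu}G_{h\I}$ (the $S^0$-case of Lemma \ref{lem:commutes}) yields $B_{\mu}G_{h\I} \simeq B_{\nu}G_{h\I}$. If $G$ is stable, Lemma \ref{lem:Hocolim_is_BG} provides homotopy equivalences $G(\mathbf{1}) \to G_{h\I}$ and $B_{\nu}G(\mathbf{1}) \to B_{\nu}G_{h\I}$; since $B_{\nu}G(\mathbf{1})$ is the ordinary classifying space of the well-pointed topological group $G(\mathbf{1})$ it classifies principal $G(\mathbf{1})$-bundles, and combining this with $B_{\nu}G_{h\I} \simeq B_{\mu}G_{h\I}$ gives the bundle statement for $B_{\mu}G_{h\I}$. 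Moreover $B_{\nu}G(\mathbf{1}) \simeq \Gamma(B_{\nu}G)(S^0)$ is the zeroth space of the connective $\Omega$-spectrum determined by the very special $\Gamma$-space $\Gamma(B_{\nu}G)$, and is therefore an infinite loop space.
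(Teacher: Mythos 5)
Your first step (the homeomorphism $\Gamma(B_{\nu}G)(S) \cong \lvert\, r \mapsto \Gamma(G^{(r)})(S)\,\rvert$ via Lemma \ref{lem:commutes}), your projection equivalences $\Gamma(G^{(k)}) \to \Gamma(G)^{\times k}$ via telescopes and cofinality of the diagonal, and your deduction of the consequences all match the paper. But the step you yourself call ``the heart of the matter'' is a genuine gap, not a deferred verification. You propose to match the $\nu$-faces of $r \mapsto \Gamma(G^{(r)})$ with the $\mu$-bar faces of $[k] \mapsto \Gamma(G)([k]\wedge S)$ under the levelwise equivalences, asserting this is ``forced'' by the Eckmann--Hilton condition. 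It is not: the projections do not intertwine the two kinds of faces, even up to a preferred homotopy. Concretely, the $\nu$-face sends $(g_1,g_2) \in G(\mathbf{n})^2$ to $\nu_n(g_1,g_2) \in G(\mathbf{n})$, while the $\mu$-face sends it to $\mu_{n,n}(g_1,g_2) \in G(\mathbf{n}\sqcup\mathbf{n})$; by the interchange law the latter equals $\nu_{2n}\bigl((\id{\mathbf{n}}\sqcup\iota_n)_*(g_1),(\iota_n\sqcup\id{\mathbf{n}})_*(g_2)\bigr)$, and this is connected to the image of $\nu_n(g_1,g_2)$ only through edge-paths of the transport category attached to two \emph{different} morphisms $\mathbf{n}\to\mathbf{n}\sqcup\mathbf{n}$. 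Promoting these pointwise homotopies to a map of simplicial objects coherent in all simplicial degrees is precisely the homotopy-coherent interchange problem; levelwise equivalences that do not commute with the face maps yield no comparison of realizations, so as written the proposal does not prove the theorem.

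The paper's proof is designed to avoid this problem rather than solve it, and that is the idea you are missing. It never compares the two bar constructions directly: it applies $B_\mu$ levelwise to the $\nu$-simplicial $\Gamma$-space $\Gamma(G^{(\bullet)})$, uses the projection equivalences to see that $[k]\mapsto B_\mu\Gamma(G^{(k)})(S)$ is a special simplicial space whose first level $B_\mu\Gamma(G)(S)$ is connected, and invokes \cite[Proposition 1.5]{paper:SegalCatAndCoh} to conclude $B_\mu\Gamma(G)(S) \simeq \Omega\lvert B_\mu\Gamma(G^{(\bullet)})\rvert(S)$; on the other side, since $B_\nu G$ is levelwise connected, $\Gamma(B_\nu G)$ is very special, whence $\Gamma(B_\nu G) \simeq \Omega B_\mu \Gamma(B_\nu G) \cong \Omega B_\mu \lvert\Gamma(G^{(\bullet)})\rvert$. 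Commuting $B_\mu$ with geometric realization closes the chain $\Gamma(B_{\nu}G) \simeq \Omega B_{\mu} \lvert \Gamma(G^{(\bullet)})\rvert \simeq B_{\mu} \Gamma(G)$, and the only simplicial maps ever needed are the projections, which are honest maps of simplicial objects. If you want to complete your route, this ``deloop both sides in the $\mu$-direction, then loop'' manoeuvre is the missing ingredient. A minor additional point: convergence gives that $\Gamma(G)$ is \emph{special}; it is \emph{very} special only when $(G_{h\I},\mu)$ is group-like, which in general requires compatible inverses (Lemma \ref{lem:h_inverse}). Since you only use the Segal maps, this misstatement is harmless, but it should be corrected.
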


\begin{proof} \label{pf:EHI_group}
Let $G^{(r,s)} \colon \I^s \to \mathcal{T}op_{\ast}$ be the simplicial $\I^s$-space given by
\[
	G^{(r,s)}(\mathbf{n}_1, \dots, \mathbf{n}_s) = G^{(r)}(\mathbf{n}_1) \times \dots \times G^{(r)}(\mathbf{n}_s)\ .
\]
Observe that $G^{(r,s)} \circ \pi_S$ is a simplicial $D(S)$-space with $\lvert G^{(\bullet,s)} \circ \pi_S \rvert = B_{\nu}G^{(s)} \circ \pi_S$, therefore Lemma \ref{lem:commutes} yields a homeomorphism $\Gamma(B_{\nu} G)(S) \cong \lvert \Gamma(G^{(\bullet)})(S)\rvert$. By the commutativity of the diagram in Lemma \ref{lem:commutes}, this is natural in $S$. Let $\lvert \Gamma(G^{(\bullet)}) \rvert (S) := \lvert \Gamma(G^{(\bullet)})(S)\rvert$. We obtain a levelwise homeomorphism of $\Gamma$-spaces $\Gamma(B_{\nu}G) \cong \lvert \Gamma(G^{(\bullet)}) \rvert$. In particular, this is a strict equivalence. The projection maps $\pi_k \colon G^{(r)}(\mathbf{n}) = G(\mathbf{n})^r \to G(\mathbf{n}) = G^{(1)}(\mathbf{n})$ induce 
\[
	\pi \colon \Gamma(G^{(r)})(S) \to \Gamma(G)(S) \times \dots \times \Gamma(G)(S) \ .
\]
For $S = S^0$ we have $\Gamma(G^{(r)})(S) = G^{(r)}_{h\I}$, $\Gamma(G)(S) = G_{h\I}$. $\pi$ fits into the commutative diagram
\[
	\xymatrix{
		G^{(r)}_{h\I} \ar[rr]^-{\pi} & & G_{h\I} \times \dots \times G_{h\I} \ \\
		\Tel(G^{(r)}) \ar[rr]^-{\widehat{\pi}} \ar[u]^-{\simeq} & & \Tel(G) \times \dots \times \Tel(G) \ar[u]_-{\simeq}
	}
\]
in which the homotopy equivalences follow from \cite[Lemma 2.1]{paper:Schlichtkrull}, since $G$ is convergent. Note that $\pi_k(\Tel(G)^{(r)}) = \varinjlim (\pi_k(G(\mathbf{n})) \times \dots \times \pi_k(G(\mathbf{n})))$, where the limit runs over the directed poset $\N_0$. Moreover, $\pi_k(\Tel(G)^r) = \varinjlim(\pi_k(G(\mathbf{n}_1)) \times \dots \times \pi_k(G(\mathbf{n}_r)))$, where the limit runs over the poset $\N_0^r$. The map $\widehat{\pi}$ is induced by the diagonal $\Delta_r \colon \N_0 \to \N_0^r$. Since the subset $\Delta_r(\N_0)$ is cofinal in $\N_0^r$, we obtain an isomorphism on all homotopy groups and therefore -- by our assumption on $G$ -- a homotopy equivalence.

Let now $S$ be arbitrary, let $s = \lvert S \rvert$. Let $G^{(r,s)}_{h\I} = \hocolim_{\I^s} G^{(r,s)}$. By \cite[Lemma 5.1]{paper:Schlichtkrull}, the natural map $\Gamma(G^{(r)})(S) \to G^{(r,s)}_{h\I}$ is an equivalence. From the above, we obtain that $\widetilde{\pi} \colon G^{(r,s)}_{h\I} \to G^{(1,s)}_{h\I} \times \dots \times G^{(1,s)}_{h\I}$ is an equivalence. The diagram
\[
	\xymatrix{
		\Gamma(G^{(r)})(S) \ar[r]^-{\pi} \ar[d]_-{\simeq} & \Gamma(G)(S) \times \dots \times \Gamma(G)(S) \ar[d]^-{\simeq} \\
		G^{(r,s)}_{h\I} \ar[r]^-{\widetilde{\pi}}_-{\simeq} & G^{(1,s)}_{h\I} \times \dots \times G^{(1,s)}_{h\I}
	}
\]
shows that $\pi$ is an equivalence for arbitrary $S$, which in turn implies that the induced map $\pi \colon B_{\mu}\Gamma(G^{(r)})(S) \to B_{\mu}\Gamma(G)(S) \times \dots \times B_{\mu}\Gamma(G)(S)$ is an equivalence as well. Now observe that $B_{\mu}\lvert\Gamma(G^{(\bullet)})\rvert(S) \cong \lvert B_{\mu}\Gamma(G^{(\bullet)}) \rvert(S)$ and $[k] \mapsto B_{\mu}\Gamma(G^{(k)})(S)$ is a simplicial space satisfying the properties of \cite[Proposition 1.5]{paper:SegalCatAndCoh}. In particular, $\pi_0(B_{\mu}\Gamma(G)(S)) \cong \pi_0((B_{\mu}G_{h\I})^s)$ is trivial. Therefore $B_{\mu}\Gamma(G)(S) \to \Omega\lvert B_{\mu}\Gamma(G^{(\bullet)}) \rvert(S) \cong \Omega B_{\mu}\lvert\Gamma(G^{(\bullet)})\rvert(S)$ is a homotopy equivalence, which is natural in $S$. Altogether we obtain a sequence of strict equivalences
\[
	\Gamma(B_{\nu}G) \simeq \Omega B_{\mu} \Gamma(B_{\nu}G) \simeq \Omega B_{\mu} \lvert \Gamma(G^{(\bullet)})\rvert \simeq B_{\mu} \Gamma(G)\ .
\]
We used that $B_{\nu}G$ is connected (and therefore $\Gamma(B_{\nu}G)$ is a very special $\Gamma$-space) in the first step. $S = S^0$ together with Lemma \ref{lem:Hocolim_is_BG} yields $B_{\nu}G(\mathbf{1}) \simeq  B_{\nu}G_{h\I} \simeq B_{\mu}G_{h\I}$ in case $G$ is stable.
\end{proof}

\subsection{Actions of Eckmann-Hilton $\I$-groups on spectra}
As was alluded to in the introduction, the compatibility diagram of an EH-$\I$-group $G$ from Definition \ref{def:EHI_group} enables us to talk about the action of $G$ on a commutative symmetric ring spectrum $R$. 

\begin{definition} \label{def:EHI_action}
Let $G$ be an EH-$\I$-group and let $R$ be a commutative symmetric ring spectrum. Then $G$ is said to \emph{act on} $R$, if $G(\mathbf{n})$ acts on $R_n$ via $\kappa_n \colon G(\mathbf{n}) \times R_n \to R_n$ such that 
\begin{enumerate}[(i)]
	\item \label{it:equi} $\kappa_n$ preserves the basepoint of $R_n$ and is $\Sigma_n$-equivariant, where $\Sigma_n$ acts on $G(\mathbf{n}) \times R_n$ diagonally and $G(\mathbf{0})$ acts trivially on $R_0$,
	\item \label{it:mult} the action is compatible with the multiplication in the sense that the following diagram commutes
	\[
		\xymatrix{
			G(\mathbf{m}) \times R_m \times G(\mathbf{n}) \times R_n \ar[rr]^-{\kappa_m \times \kappa_n} \ar[d]_-{(\mu^G_{m,n} \times \mu^R_{m,n}) \circ \tau }& & R_m \wedge R_n \ar[d]^-{\mu^R_{m,n}} \\
			G(\mathbf{m} \sqcup \mathbf{n}) \times R_{m+n} \ar[rr]_-{\kappa_{m+n}} & & R_{m+n}
		}
	\]
	where $\tau$ denotes the map that switches the two inner factors. 
	\item \label{it:stab} the action is compatible with stabilization in the sense that the following diagram commutes for $l,m,n \in \N_0$ with $l+m = n$, $\sigma_{l,m} = \mu_{l,m} \circ (\eta_l \wedge \id{R_m})$ and the order preserving inclusion $\iota_{m,n} \colon \mathbf{m} \to \mathbf{n}$ onto the last $m$ elements
	\[
		\xymatrix{
			G(\mathbf{m}) \times (S^l \wedge R_m) \ar[rr]^-{\id{S^l} \wedge \kappa_m} \ar[d]_-{\iota_{m,n*} \times \sigma_{l,m}} & & S^l \wedge R_m \ar[d]^-{\sigma_{l,m}} \\
			G(\mathbf{n}) \times R_n \ar[rr]_-{\kappa_n} & & R_n
		}
	\]
\end{enumerate}
\end{definition}

\begin{theorem}\label{thm:EHI_action}
An action of an EH-$\I$-group $G$ on a comm.\ symmetric ring spectrum $R$ defines a map of $\I$-mo\-noids $G \to \Omega^{\infty}(R)$, which sends $g \in G(\mathbf{n})$ to $g \cdot \eta_n := \kappa_n(g, \eta_n) \in \Omega^{\infty}(R)(\mathbf{n}) = \Omega^nR_n$. If $G$ has compatible inverses, this factors over a morphism $G \to \units{R}$ of commutative $\I$-monoids, which deloops to a map
\(
	B_\mu \Gamma(G) \to B_{\mu}\Gamma(\units{R})
\).	In particular, we obtain $B_{\mu}G_{h\I} \to BGL_1(R)$.
\end{theorem}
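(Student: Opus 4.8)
The plan is to produce the morphism levelwise and then feed it into the $\Gamma$-space machinery. Define $\Phi_n \colon G(\mathbf{n}) \to \Omega^n R_n = \Omega^{\infty}(R)(\mathbf{n})$ by $\Phi_n(g) = \kappa_n(g,-)\circ\eta_n$, i.e.\ $g\cdot\eta_n$. The whole content is to check that the family $\Phi=(\Phi_n)_n$ is a morphism of commutative $\I$-monoids $G\to\Omega^{\infty}(R)$; once this is known, every remaining assertion is formal. Basepoint preservation is immediate, since $G(\mathbf{0})$ acts trivially on $R_0$ by axiom (i), so $\Phi_0$ is constant at $\eta_0$, and $\Phi_n(1_n)=\eta_n$ because the identity $1_n$ acts as $\id{R_n}$.

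Naturality is the delicate point. I would factor an arbitrary injection $\alpha\colon\mathbf{m}\to\mathbf{n}$ as $\alpha=\bar\alpha\circ\iota_{m,n}$, with $\iota_{m,n}$ the order-preserving inclusion onto the last $m$ letters and $\bar\alpha\in\Sigma_n$ the block permutation, exactly as in the definition of the $\I$-monoid $\Omega^{\infty}(R)$, and treat the two factors separately. For the inclusion, $\Omega^{\infty}(R)(\iota_{m,n})(f)=\sigma_{l,m}\circ(\id{S^l}\wedge f)$ with $l=n-m$; applied to $f=\Phi_m(g)$ and evaluated on $x\wedge s\in S^l\wedge S^m$, the stabilization axiom (iii) of Definition \ref{def:EHI_action} turns $\sigma_{l,m}(x\wedge\kappa_m(g,\eta_m(s)))$ into $\kappa_n\bigl(G(\iota_{m,n})(g),\sigma_{l,m}(x\wedge\eta_m(s))\bigr)$, and the compatibility $\mu^R_{l,m}\circ(\eta_l\wedge\eta_m)=\eta_n$ of Definition \ref{def:symmetric_spectrum} rewrites the second slot as $\eta_n(x\wedge s)$, yielding $\Phi_n(G(\iota_{m,n})(g))$. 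For the permutation, the $\Sigma_n$-equivariance of $\kappa_n$ (axiom (i)) together with that of $\eta_n$ conjugates $\Phi_n$ correctly, giving $\Omega^{\infty}(R)(\bar\alpha)\circ\Phi_n=\Phi_n\circ G(\bar\alpha)$; composing the two cases gives naturality for all $\alpha$.

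Compatibility with the products is a direct consequence of axiom (ii). I would evaluate the two sides of $\Phi_{m+n}(\mu^G_{m,n}(g,h))=\mu_{m,n}(\Phi_m(g),\Phi_n(h))$ on $s\wedge t\in S^m\wedge S^n$: unwinding the monoid structure of $\Omega^{\infty}(R)$, the right-hand side is $\mu^R_{m,n}\bigl(\kappa_m(g,\eta_m(s))\wedge\kappa_n(h,\eta_n(t))\bigr)$, which axiom (ii) converts into $\kappa_{m+n}\bigl(\mu^G_{m,n}(g,h),\mu^R_{m,n}(\eta_m(s)\wedge\eta_n(t))\bigr)$, and $\mu^R_{m,n}\circ(\eta_m\wedge\eta_n)=\eta_{m+n}$ collapses the second slot to $\eta_{m+n}(s\wedge t)$, which is the left-hand side. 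Thus $\Phi$ is a morphism of $\I$-monoids, and it is commutative by Lemma \ref{lem:commutative}. To factor it through the units when $G$ has compatible inverses, I take $\Phi_n(g^{-1})$ as the candidate stable inverse of $\Phi_n(g)$; since $\Phi$ is a monoid map, $\mu_{n,n}(\Phi_n(g),\Phi_n(g^{-1}))=\Phi_{n\sqcup n}(\mu^G_{n,n}(g,g^{-1}))$, and Lemma \ref{lem:h_inverse} supplies a path in $G(\mathbf{n}\sqcup\mathbf{n})$ from $\mu^G_{n,n}(g,g^{-1})$ to $1_{n\sqcup n}$, whose image under the continuous $\Phi_{n\sqcup n}$ is a path to $\eta_{n\sqcup n}$. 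Interchanging $g$ and $g^{-1}$ handles the opposite order, so $\Phi_n(g)\in\units{R}(\mathbf{n})$ and $\Phi$ restricts to a morphism $G\to\units{R}$ of commutative $\I$-monoids.

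Everything after this is formal: functoriality of the $\Gamma$-space construction on commutative $\I$-monoids gives $\Gamma(G)\to\Gamma(\units{R})$, Segal's delooping produces $B_{\mu}\Gamma(G)\to B_{\mu}\Gamma(\units{R})$, and evaluating at $S^0$---where $\Gamma(\units{R})(S^0)=GL_1(R)$ and $B_{\mu}\Gamma(-)(S^0)$ is the bar construction of the induced monoid map $G_{h\I}\to GL_1(R)$---yields $B_{\mu}G_{h\I}\to BGL_1(R)$. The only genuinely delicate step is the naturality of $\Phi$: it is there that one must carefully track the block permutation $\bar\alpha$ and the suspension coordinates in the functoriality formula for $\Omega^{\infty}(R)$ and align them with axioms (i) and (iii); the multiplicative compatibility and the passage to units are short once naturality is established.
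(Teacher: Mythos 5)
Your proposal is correct and follows essentially the same route as the paper's proof: the map $g \mapsto g\cdot\eta_n$, naturality via the factorization $\alpha = \bar\alpha\circ\iota_{m,n}$ using axioms (i) and (iii), the monoid property via axiom (ii) and $\mu^R_{m,n}\circ(\eta_m\wedge\eta_n)=\eta_{m+n}$, the factorization through $\units{R}$ via Lemma \ref{lem:h_inverse}, and the formal passage to $\Gamma$-spaces and deloopings. Your treatment of the inclusion and permutation parts as two separate cases is just a more explicit bookkeeping of the single chain of equalities in the paper, not a different argument.
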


\begin{proof}\label{pf:EHI_action}
To see that $G \to \Omega^{\infty}(R)$	really defines a natural transformation, observe that each morphism $\alpha \colon \mathbf{m} \to \mathbf{n}$ factors as $\alpha = \bar{\alpha} \circ \iota_{m,n}$ with $\bar{\alpha} \in \Sigma_n$ as explained in the paragraph after Definition \ref{def:ISpace_IMonoid}. Now note that
\begin{align*}
	\alpha_*(g \cdot \eta_m) & = \bar{\alpha} \circ \mu^R_{l,m}(\eta_l \wedge (g \cdot \eta_m)) \circ \bar{\alpha}^{-1} = \bar{\alpha} \circ \iota_{m,n*}(g) \cdot \mu^R_{l,m}(\eta_l \wedge \eta_m) \circ \bar{\alpha}^{-1} \\
	& = \alpha_*(g) \cdot \bar{\alpha} \circ \eta_n \circ \bar{\alpha}^{-1} = \alpha_*(g) \cdot \eta_n
\end{align*}
where we used (\ref{it:equi}) and (\ref{it:stab}) of Definition \ref{def:EHI_action}. That $G \to \Omega^{\infty}(R)$ is a morphism of $\I$-monoids is a consequence of (\ref{it:mult}). Indeed, for $g \in G(\mathbf{m})$, $h \in G(\mathbf{n})$ 
\[
	\mu^R_{m,n}(g \cdot \eta_m, h \cdot \eta_n) = \mu^G_{m,n}(g,h) \cdot \mu^R_{m,n}(\eta_m, \eta_n) = \mu^G_{m,n}(g,h) \cdot \eta_{m+n}\ .
\]
If $G$ has compatible inverses, then $g^{-1}$ provides a stable inverse of $g \in G(\mathbf{n})$ by Lemma~\ref{lem:h_inverse}. Forming homotopy colimits, we obtain a morphism of topological monoids $G_{h\I} \to GL_1(R)$, which deloops.
\end{proof}

\subsection{Eckmann-Hilton $\I$-groups and permutative categories}
Given a permutative category $(\mathcal{C}, \otimes)$ and an object $x \in {\rm obj}(\mathcal{C})$, there is a canonical commutative $\I$-monoid $E_x$ associated to it: Let $E_x(\mathbf{n}) = {\rm End}_{\mathcal{C}}(x^{\otimes n})$ (with $x^{\otimes 0} = 1_{\mathcal{C}}$), let $\bar{\alpha} \in \Sigma_n$ be the permutation associated to a morphism $\alpha \colon \mathbf{m} \to \mathbf{n}$ as above and define $\alpha_* \colon E_x(\mathbf{m}) \to E_x(\mathbf{n})$ by sending an endomorphism $f$ to $\bar{\alpha} \circ (\id{x^{\otimes n-m}} \otimes f) \circ \bar{\alpha}^{-1}$, where the permutation group $\Sigma_n$ acts on $x^{\otimes n}$ using the symmetry of $\mathcal{C}$. The monoid structure of $E_x$ is given by 
\[
	\mu_E \colon E_x(\mathbf{m}) \times E_x(\mathbf{n}) \to E_x(\mathbf{m} \sqcup \mathbf{n}) \quad ; \quad (f,g) \mapsto f \otimes g\ .
\]
Let $A_x(\mathbf{n}) = {\rm Aut}_{\mathcal{C}}(x^{\otimes n})$ together with the analogous structures as described above. This is an Eckmann-Hilton $\I$-group. Let $\mathcal{C}_x$ be the full permutative subcategory of $\mathcal{C}$ containing the objects $x^{\otimes n}$, $n \in \N_0$.

\begin{definition}\label{def:stabilization}
A strict symmetric monoidal functor $\theta \colon \I \to \mathcal{C}_x$ will be called a \emph{stabilization of $x$}, if $\theta(\mathbf{1}) = x$ and for each morphism $\alpha \colon \mathbf{m} \to \mathbf{n}$ in $\I$ and each $f \in E_x(\mathbf{m})$ the following diagram commutes:
\[
	\xymatrix{
	\theta(\mathbf{m}) \ar[r]^{f} \ar[d]_{\theta(\alpha)} & \theta(\mathbf{m}) \ar[d]^{\theta(\alpha)}\\
	\theta(\mathbf{n}) \ar[r]_{\alpha_*(f)} & \theta(\mathbf{n})
	}
\]
\end{definition}

\begin{lemma} \label{lem:bij_stabi}
Let $\iota_{\mathbf{1}} \colon \mathbf{0} \to \mathbf{1}$ be the unique morphism in $\I$. The map that associates to a stabilization $\theta$ the morphism $\theta(\iota_{\mathbf{1}}) \in \hom_{\mathcal{C}}(1_{\mathcal{C}}, x)$ yields a bijection between stabilizations and elements in $\hom_{\mathcal{C}}(1_{\mathcal{C}},x)$. 
\end{lemma}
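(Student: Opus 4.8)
The plan is to show that a stabilization $\theta$ is completely rigid: its values on objects and on all morphisms are forced by strict symmetric monoidality together with the single datum $\phi := \theta(\iota_{\mathbf{1}}) \in \hom_{\mathcal{C}}(1_{\mathcal{C}}, x)$. First I would record the structure of the morphisms of $\I$. On objects, strictness and $\theta(\mathbf{1}) = x$ give $\theta(\mathbf{n}) = \theta(\mathbf{1})^{\otimes n} = x^{\otimes n}$. For morphisms, recall (from the paragraph after Definition~\ref{def:ISpace_IMonoid}) that every $\alpha \colon \mathbf{m} \to \mathbf{n}$ factors uniquely as $\alpha = \bar{\alpha} \circ \iota_{m,n}$, where $\iota_{m,n} \colon \mathbf{m} \to \mathbf{n}$ is the order-preserving inclusion onto the last $m$ elements and $\bar{\alpha} \in \Sigma_n$. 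Since $\bar{\alpha}$ is built from the block permutations $\tau$, a strict symmetric monoidal functor must send it to the corresponding symmetry isomorphism of $\mathcal{C}$ acting on $x^{\otimes n}$; this value is forced. Moreover $\iota_{m,n} = \iota_l \sqcup \id{\mathbf{m}}$ with $l = n-m$, and since $\mathbf{0}$ is initial the unique map $\iota_l \colon \mathbf{0} \to \mathbf{l}$ equals $\iota_{\mathbf{1}}^{\sqcup l}$; hence strictness forces
\[ \theta(\iota_{m,n}) = \theta(\iota_{\mathbf{1}})^{\otimes l} \otimes \id{x^{\otimes m}} = \phi^{\otimes (n-m)} \otimes \id{x^{\otimes m}}. \]
Combining, $\theta(\alpha) = \bar{\alpha} \circ (\phi^{\otimes(n-m)} \otimes \id{x^{\otimes m}})$, so $\theta$ is entirely determined by $\phi$. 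Thus two stabilizations with the same value on $\iota_{\mathbf{1}}$ coincide, proving the map $\theta \mapsto \theta(\iota_{\mathbf{1}})$ is \emph{injective}.

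For \emph{surjectivity}, given an arbitrary $\phi \in \hom_{\mathcal{C}}(1_{\mathcal{C}}, x)$ I would \emph{define} $\theta(\mathbf{n}) = x^{\otimes n}$ and $\theta(\alpha) = \bar{\alpha} \circ (\phi^{\otimes(n-m)} \otimes \id{x^{\otimes m}})$ by the formula above, and verify that this is a stabilization with $\theta(\iota_{\mathbf{1}}) = \phi$ (the last equality is immediate, since for $\iota_{\mathbf{1}}$ we have $m=0$, $n=1$ and $\bar{\alpha}$ is the identity permutation). The content is to check that $\theta$ is a well-defined strict symmetric monoidal functor. Preservation of identities and of the monoidal product and symmetry is straightforward from the formula; the one genuinely laborious point, and the \textbf{main obstacle}, is functoriality: for $\alpha \colon \mathbf{m} \to \mathbf{n}$ and $\beta \colon \mathbf{n} \to \mathbf{p}$ one must identify the normal form of $\beta \circ \alpha$ and match $\theta(\beta)\circ\theta(\alpha)$ with $\theta(\beta\circ\alpha)$, which amounts to the bookkeeping that relocating inserted copies of $\phi$ past the block permutations is governed by the naturality of the symmetry of $\mathcal{C}$. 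Conceptually this is exactly the statement that $\I$ is the free symmetric monoidal category on one object $\mathbf{1}$ together with one morphism $\iota_{\mathbf{1}} \colon \mathbf{0} \to \mathbf{1}$ from the unit, so that strict symmetric monoidal functors out of $\I$ with $\theta(\mathbf{1}) = x$ correspond bijectively to the choice of $\theta(\iota_{\mathbf{1}})$; I would either invoke this universal property or, to stay self-contained, carry out the composite computation directly using the uniqueness of the factorization $\alpha = \bar{\alpha}\circ\iota_{m,n}$.

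Finally I would verify that the extra diagram in Definition~\ref{def:stabilization} holds automatically for the functor just constructed, so that strict symmetric monoidal functors with $\theta(\mathbf{1}) = x$ and stabilizations coincide. With $\theta(\alpha) = \bar{\alpha}\circ(\phi^{\otimes(n-m)}\otimes\id{x^{\otimes m}})$ and $\alpha_*(f) = \bar{\alpha}\circ(\id{x^{\otimes(n-m)}}\otimes f)\circ\bar{\alpha}^{-1}$, the required identity $\theta(\alpha)\circ f = \alpha_*(f)\circ\theta(\alpha)$ reduces, after cancelling $\bar{\alpha}$, to $(\phi^{\otimes(n-m)}\otimes\id{x^{\otimes m}})\circ f = (\id{x^{\otimes(n-m)}}\otimes f)\circ(\phi^{\otimes(n-m)}\otimes\id{x^{\otimes m}})$, an instance of the interchange law for $\otimes$ (both sides equal $\phi^{\otimes(n-m)}\otimes f$). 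Since injectivity already shows every stabilization has the displayed form, this confirms that $\theta \mapsto \theta(\iota_{\mathbf{1}})$ and $\phi \mapsto \theta$ are mutually inverse, giving the claimed bijection.
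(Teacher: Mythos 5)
Your proposal is correct and takes essentially the same route as the paper's own proof: injectivity because a stabilization is forced by the single datum $\theta(\iota_{\mathbf{1}})$, and surjectivity by defining $\theta(\alpha) = \theta(\bar{\alpha}) \circ (\varphi^{\otimes (n-m)} \otimes \id{x^{\otimes m}})$ from the factorization $\alpha = \bar{\alpha} \circ \iota_{m,n}$, with functoriality rescued by the permutation invariance of $\varphi^{\otimes k}$ (your ``naturality of the symmetry'' bookkeeping) and the stabilization diagram verified by the interchange law, exactly as in the paper. Your closing remark that this reflects the universal property of $\I$ as the free symmetric monoidal category on an object equipped with a morphism from the unit is a correct conceptual repackaging of that same functoriality check, not a different argument.
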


\begin{proof} \label{pf:bij_stabi}
Note that a stabilization is completely fixed by knowing $\theta(\iota_{\mathbf{1}})$, therefore the map is injective. Let $\varphi \in \hom_{\mathcal{C}}(1_{\mathcal{C}},x)$. Define $\theta(\mathbf{n}) = x^{\otimes n}$ on objects. Let $\alpha \colon \mathbf{m} \to \mathbf{n}$ be a morphism in $\I$ and let $\alpha = \bar{\alpha} \circ \iota_{m,n}$ be the factorization as explained after Definition~\ref{def:ISpace_IMonoid}. Define $\theta(\iota_{m,n}) = \varphi^{\otimes (n-m)} \otimes \id{x^{\otimes m}} \colon x^{\otimes m} = 1_{\mathcal{C}}^{\otimes (n-m)} \otimes x^{\otimes m} \to x^{\otimes n}$ and let $\theta(\bar{\alpha})$ be the permutation of the tensor factors. The decomposition $\alpha = \bar{\alpha} \circ \iota_{m,n}$ is \emph{not} functorial, due to the fact that $\overline{\beta \circ \alpha}$ and $\bar{\beta} \circ (\id{} \sqcup \bar{\alpha})$ differ by a permutation. Nevertheless $\theta(\alpha) = \theta(\bar{\alpha}) \circ \theta(\iota_{m,n})$ turns out to be functorial due to the permutation invariance of $\varphi^{\otimes k}$. It is straightforward to check that this is also strict symmetric monoidal. Let $f \colon x^{\otimes m} \to x^{\otimes m}$ be in $E_x(\mathbf{m})$. We have
\[
	\theta(\alpha) \circ f = \bar{\alpha} \circ (\varphi^{\otimes (n-m)} \otimes \id{x^{\otimes m}}) \circ f = \bar{\alpha} \circ (\id{x^{\otimes n-m}} \otimes f) \circ \bar{\alpha}^{-1} \circ \theta(\alpha) = \alpha_*(f) \circ \theta(\alpha)\ .
\]
This shows that the map is also surjective.
\end{proof}

As sketched in \cite{paper:SegalCatAndCoh} there is a $\Gamma$-space $\Gamma(\mathcal{C})$ associated to a $\Gamma$-category $\mathcal{A}_\mathcal{C}$, which is constructed as follows: Let $S$ be a finite pointed set and denote by $\bar{S}$ the complement of the basepoint of $S$, then the objects of $\mathcal{A}_\mathcal{C}(S)$ are families $\{ x_U \in {\rm obj}(\mathcal{C})\ |\ U \subset \bar{S}\}$ together with isomorphisms $\alpha_{U,V} \colon x_U \otimes x_V \to x_{U \cup V}$, whenever $U \cap V = \emptyset$, compatible with the symmetry of $\mathcal{C}$ and such that $x_{\emptyset} = 1_{\mathcal{C}}$ and $x_U \otimes x_{\emptyset} \to x_U$ is the identity. The morphisms of $\mathcal{A}_\mathcal{C}(S)$ are families of morphisms $\beta_{U,U'} \colon x_U \to x_{U'}$ in $\mathcal{C}$ such that 
\[
	\xymatrix{
		x_U \otimes x_V \ar[rr]^-{\alpha_{U,V}} \ar[d]_-{\beta_{U,U'} \otimes \beta_{V,V'}} && x_{U \cup V} \ar[d]^-{\beta_{U \cup V, U' \cup V'}} \\
		x_{U'} \otimes x_{V'} \ar[rr]_-{\alpha_{U',V'}} && x_{U' \cup V'}
	}
\]
commutes. We define $\Gamma(\mathcal{C})(S) = \lvert \mathcal{A}_{\mathcal{C}}(S) \rvert$. Let $\mathcal{C}_x$ be the full (permutative) subcategory of $\mathcal{C}$ containing the objects $x^{\otimes n}$ for $n \in \N_0$.

% find references!
In the next lemma we will introduce a technical tool from simplicial homotopy theory: The nerve of a topological category $\mathcal{D}$ is the simplicial space $N_n\mathcal{D} = {\rm Fun}([n],\mathcal{D})$, where $[n]$ is the category of the directed poset $\{0, \dots, n\}$. We define the \emph{double nerve} of $\mathcal{D}$ as the  \emph{bi}simplicial space $N_{\bullet}N_{\bullet}\mathcal{D}$ with $N_m N_n \mathcal{D} = {\rm Fun}([m] \times [n], \mathcal{D})$. It consists of an $m \times n$-array of commuting squares in $\mathcal{D}$. The diagonal functors ${\rm diag}_n \colon [n] \to [n] \times [n]$ induce a simplicial map $\varphi_n \colon N_n N_n \mathcal{D} \to N_n \mathcal{D}$. 

\begin{lemma} \label{lem:bisimp_vs_simp}
The simplicial map $\varphi_{\bullet}$ induces a homotopy equivalence $\lvert N_{\bullet}N_{\bullet}\mathcal{D} \rvert \to \lvert N_{\bullet}\mathcal{D} \rvert$.
\end{lemma}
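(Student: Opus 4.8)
The plan is to exploit the standard identification between the realization of a bisimplicial space and the realization of its diagonal, together with the realization lemma underlying \cite[Proposition A.1]{paper:SegalCatAndCoh}, which says that a map of (proper) simplicial spaces that is a levelwise homotopy equivalence realizes to a homotopy equivalence. Since $\lvert N_{\bullet}N_{\bullet}\mathcal{D}\rvert$ is by definition the realization of the bisimplicial space $N_p N_q\mathcal{D} = {\rm Fun}([p]\times[q],\mathcal{D})$, and $\varphi_{\bullet}$ is exactly the map induced on diagonals by precomposition with the ${\rm diag}_n$, I would keep the two coordinate simplicial directions separate throughout and only pass to the diagonal at the very end.

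The main obstacle is that $\varphi$ itself is \emph{not} a map of bisimplicial spaces: it is assembled from the diagonal functors ${\rm diag}_n \colon [n]\to[n]\times[n]$, which mix the two coordinates, so the realization lemma cannot be applied to $\varphi$ directly. To circumvent this I would analyze a genuinely bisimplicial \emph{section} of $\varphi$ instead. Precomposition with the first projection $\pi_1 \colon [p]\times[q]\to[p]$ defines a bisimplicial map $\Pi \colon Y \to X$, where $X_{p,q} = N_p N_q\mathcal{D}$ and $Y_{p,q} = N_p\mathcal{D}$ is constant in the $q$-direction. Because $\pi_1\circ {\rm diag}_n = {\rm id}_{[n]}$, the diagonal of $\Pi$ is a section of $\varphi_{\bullet}$, i.e.\ $\varphi \circ {\rm diag}(\Pi) = {\rm id}$ levelwise. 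Moreover $\lvert Y\rvert = B\mathcal{D} = \lvert N_{\bullet}\mathcal{D}\rvert$, and once I show that $\lvert\Pi\rvert = \lvert{\rm diag}(\Pi)\rvert$ is a homotopy equivalence, the proof concludes: since the section ${\rm diag}(\Pi)$ is a homotopy equivalence and $\varphi\circ{\rm diag}(\Pi)={\rm id}$, the map $\varphi$ is a homotopy inverse of an equivalence and hence itself an equivalence.

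To see that $\lvert\Pi\rvert$ is an equivalence I would realize the $p$-direction first. For each fixed $q$ one has ${\rm Fun}([p]\times[q],\mathcal{D}) = N_p({\rm Fun}([q],\mathcal{D}))$, so realizing in $p$ turns $X_{\bullet,q}$ into $B{\rm Fun}([q],\mathcal{D})$, turns the constant $Y_{\bullet,q}$ into $B\mathcal{D}$, and turns $\Pi_{\bullet,q}$ into the map $Bc$ induced by the constant-diagram functor $c\colon\mathcal{D}\to{\rm Fun}([q],\mathcal{D})$. The key point is that $c$ is a homotopy equivalence on classifying spaces: evaluation at the terminal object, ${\rm ev}_q\colon{\rm Fun}([q],\mathcal{D})\to\mathcal{D}$, satisfies ${\rm ev}_q\circ c = {\rm id}$, while the chain maps $F(i)\to F(q)$ assemble into a natural transformation ${\rm id}\Rightarrow c\circ{\rm ev}_q$, giving a homotopy $c\circ{\rm ev}_q\simeq{\rm id}$ after realization. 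Thus $\Pi_{\bullet,q}$ is a levelwise (in $q$) homotopy equivalence; since constant functors are compatible with every simplicial operator in $q$, these equivalences assemble into a map of simplicial spaces in $q$, and a second application of the realization lemma yields that $\lvert\Pi\rvert\colon B\mathcal{D}\to\lvert N_{\bullet}N_{\bullet}\mathcal{D}\rvert$ is a homotopy equivalence, as required.

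The remaining points are routine: verifying the properness (goodness) hypotheses for each application of the realization lemma, which hold because every space in sight is the nerve of a topological category, and the bookkeeping that $\Pi$ is indeed bisimplicial and that its diagonal is the asserted section of $\varphi$.
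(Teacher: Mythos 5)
Your proof is correct, but it takes a genuinely different route from the paper's. The paper writes down an explicit two-sided homotopy inverse: the map $N_n\mathcal{D} \to N_nN_n\mathcal{D}$ induced by precomposition with $\max_n \colon [n]\times[n]\to[n]$, $(k,\ell)\mapsto\max\{k,\ell\}$. Since $\max_n\circ\,{\rm diag}_n = {\rm id}_{[n]}$, one composite is the identity on the nose, and the unique morphisms $(k,\ell)\to(\max\{k,\ell\},\max\{k,\ell\})$ assemble into a natural transformation ${\rm id}\Rightarrow{\rm diag}_n\circ\max_n$, which the paper encodes as a simplicial homotopy $H$; realizing $H$ gives the homotopy for the other composite. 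The payoff of that approach is that a simplicial homotopy realizes to an honest homotopy with no cofibrancy hypotheses at all, so the lemma holds for an arbitrary topological category $\mathcal{D}$. You instead keep the two simplicial directions separate: the bisimplicial section $\Pi$ induced by $\pi_1$, the exponential law $N_pN_q\mathcal{D}=N_p({\rm Fun}([q],\mathcal{D}))$, the levelwise equivalence $Bc_q\colon B\mathcal{D}\to B\,{\rm Fun}([q],\mathcal{D})$ coming from ${\rm ev}_q\circ c={\rm id}$ and ${\rm id}\Rightarrow c\circ{\rm ev}_q$, and then one application of the realization lemma in the $q$-direction. Note that your natural transformation ${\rm id}\Rightarrow c\circ{\rm ev}_q$ is exactly the same categorical mechanism as the paper's $\kappa$ (``push everything to the terminal object''), just deployed one coordinate at a time. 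Your bookkeeping is also right where it matters: the evaluations ${\rm ev}_q$ do \emph{not} assemble into a simplicial map, only the constant functors $c_q$ do, and you correctly lean only on the latter, since the realization lemma needs levelwise equivalences but not levelwise-compatible inverses.

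The one point you dismiss too quickly is the ``routine'' verification of goodness. The realization lemma for levelwise homotopy equivalences genuinely fails for arbitrary simplicial spaces, and being the nerve of a topological category is not by itself sufficient: one needs the categories to be well-pointed, so that the degeneracies (insertion of identity morphisms) are cofibrations. In the situations where the paper invokes this lemma (the categories $\mathcal{C}_x$ and $\mathcal{A}_{\mathcal{C}}(S)$ built from automorphisms of stabilized strongly self-absorbing $C^*$-algebras) this well-pointedness is available from \cite{paper:DadarlatP1}, so your argument does go through in context; but as stated your proof proves a version of the lemma carrying an extra hypothesis that the paper's explicit-homotopy argument avoids entirely.
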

	
\begin{proof} \label{pf:bisimp_vs_simp}
To construct the homotopy inverse, let $\max_n \colon [n] \times [n] \to [n]$ be given by $\max_n(k,\ell) = \max\{k, \ell \}$, which completely determines its value on morphisms. We have $\max_n \circ\, {\rm diag}_n = \id{[n]}$. Let $(k,\ell) \in [n] \times [n]$. There is a unique morphism $\kappa_{(k,\ell)} \colon (k, \ell) \to (\max\{k,\ell\}, \max\{k,\ell\})$. Therefore there is a natural transformation $\kappa \colon \id{[n] \times [n]} \to \,{\rm diag}_n \circ \max_n$. From $\kappa$, we can construct a functor $h \colon [n] \times [n] \times [1] \to [n] \times [n]$, which induces 
\[
	H \colon N_{n}N_n\mathcal{D} \times \hom([n] \times [n], [1]) \to N_nN_n\mathcal{D} \ ; \  (F, f) \mapsto F \circ h \circ (\id{[n] \times [n]} \times f) \circ {\rm diag}_{[n] \times [n]}\ .
\]
After geometric realization, $H$ yields a homotopy inverse of ${\rm diag}_n \circ \max_n$.
\end{proof}

Let $S_{\otimes} \colon \mathcal{C}_x \to \mathcal{C}_x$ be the functor $x^{\otimes n} \mapsto x^{\otimes (n+1)}$ and $f \mapsto f \otimes \id{x}$. Observe that 

\noindent $\lvert \Tel(N_k \mathcal{C}_x; N_kS_{\otimes}) \rvert_k \cong \Tel(\lvert N_{\bullet}\mathcal{C}_x \rvert; \lvert N_{\bullet}S_{\otimes} \rvert) = \Tel(\lvert N_{\bullet} \mathcal{C}_x \rvert)$ by Lemma~\ref{lem:commutes} and that $\theta$ provides a natural transformation $\id{\mathcal{C}_x} \to S_{\otimes}$. Therefore we obtain a map $\Tel(\lvert N_{\bullet}\mathcal{C}_x \rvert) \to \lvert N_{\bullet}N_{\bullet}\mathcal{C}_x \rvert$.

\begin{lemma} \label{lem:telescope}
The map $\Tel(\lvert N_{\bullet}\mathcal{C}_x \rvert) \to \lvert N_{\bullet}N_{\bullet}\mathcal{C}_x \rvert$ constructed in the last paragraph is a homotopy equivalence.
\end{lemma}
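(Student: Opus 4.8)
The plan is to recognize both sides as models for $\lvert N_\bullet \mathcal{C}_x \rvert$ and to pin the constructed map down against the diagonal comparison of Lemma~\ref{lem:bisimp_vs_simp}. Write $f = \lvert N_\bullet S_\otimes \rvert$ for the self-map defining the telescope, let $\iota_0 \colon \lvert N_\bullet \mathcal{C}_x \rvert \to \Tel(\lvert N_\bullet \mathcal{C}_x \rvert)$ be the inclusion of the initial stage, let $\varphi \colon \lvert N_\bullet N_\bullet \mathcal{C}_x \rvert \to \lvert N_\bullet \mathcal{C}_x \rvert$ be the homotopy equivalence induced by the diagonal from Lemma~\ref{lem:bisimp_vs_simp}, and denote by $g \colon \Tel(\lvert N_\bullet \mathcal{C}_x \rvert) \to \lvert N_\bullet N_\bullet \mathcal{C}_x \rvert$ the map to be analysed. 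I would prove two things, that $\iota_0$ is a homotopy equivalence and that $\varphi \circ g \circ \iota_0$ is the identity of $\lvert N_\bullet \mathcal{C}_x \rvert$, and then conclude by the two-out-of-three property: since $\varphi \circ g \circ \iota_0$ and $\varphi$ are homotopy equivalences, so is $g \circ \iota_0$, and since $\iota_0$ is a homotopy equivalence, so is $g$. It is worth noting that $g$ is \emph{not} a levelwise equivalence in the telescope-realization direction (already in nerve degree $0$ one side is homotopy discrete while the other is connected), so the equivalence is genuinely a realization phenomenon and the diagonal lemma is essential.

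First I would check that $\iota_0$ is a homotopy equivalence. The natural transformation $\theta \colon \id{\mathcal{C}_x} \to S_\otimes$ is the same datum as a functor $\mathcal{C}_x \times [1] \to \mathcal{C}_x$ restricting to $\id{\mathcal{C}_x}$ and $S_\otimes$ at the two ends; passing to nerves and realizing, it yields a homotopy from the identity of $\lvert N_\bullet \mathcal{C}_x \rvert$ to $f$. Hence $f$ is a self-homotopy-equivalence, and the homotopy $f \simeq \id{\lvert N_\bullet \mathcal{C}_x \rvert}$ induces a homotopy equivalence of telescopes $\Tel(\lvert N_\bullet \mathcal{C}_x \rvert; f) \simeq \Tel(\lvert N_\bullet \mathcal{C}_x \rvert; \id{}) = \lvert N_\bullet \mathcal{C}_x \rvert \times [0,\infty)$, under which $\iota_0$ corresponds to the inclusion at $0$; the requisite cofibrancy is supplied by the fact that all the spaces involved are geometric realizations of simplicial spaces.

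The crux is the identity $\varphi \circ g \circ \iota_0 = \id{\lvert N_\bullet \mathcal{C}_x \rvert}$, which forces me to unwind the construction of $g$. Under the homeomorphism $\Tel(\lvert N_\bullet \mathcal{C}_x \rvert) \cong \lvert \Tel(N_k \mathcal{C}_x; N_k S_\otimes) \rvert_k$ of Lemma~\ref{lem:commutes}, a cell of the telescope is given by a chain $i_0 \le \dots \le i_p$ in $\mathcal{N}$ together with a $k$-simplex of $N_\bullet \mathcal{C}_x$; the iterates of $\theta$ turn the stabilization steps of this chain into a commuting $[p] \times [k]$-diagram in $\mathcal{C}_x$, and this assignment is precisely $g$, viewed as a map into the double nerve with the stabilization direction as the first simplicial coordinate. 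On the image of $\iota_0$, where $p = 0$ and no stabilization occurs, $g$ is the inclusion of $N_\bullet \mathcal{C}_x = N_0 N_\bullet \mathcal{C}_x$ via the degeneracy that is constant in the first coordinate, i.e.\ a $k$-simplex $F \in N_k \mathcal{C}_x = {\rm Fun}([k], \mathcal{C}_x)$ is sent to $F \circ \mathrm{pr}_2 \in N_k N_k \mathcal{C}_x$. Since $\varphi_k$ is induced by $\mathrm{diag}_k \colon [k] \to [k] \times [k]$ and $\mathrm{pr}_2 \circ \mathrm{diag}_k = \id{[k]}$, one has $\varphi_k(F \circ \mathrm{pr}_2) = F$, so that $\varphi \circ g \circ \iota_0$ is the identity on the nose.

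I expect the main obstacle to be exactly this last piece of bookkeeping: writing $g$ down explicitly enough, through the chain of identifications in Lemma~\ref{lem:commutes} and the diagonal-of-a-bisimplicial-space description underlying Lemma~\ref{lem:bisimp_vs_simp}, to be sure that its restriction to the initial telescope stage is the constant-in-$m$ degeneracy and is therefore split by the diagonal. Once that compatibility is established, together with the homotopy invariance of the telescope under the homotopy coming from $\theta$, the homotopy-equivalence conclusion for $g$ is formal.
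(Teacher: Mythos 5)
Your proposal is correct and follows essentially the same route as the paper: both arguments use $\theta$ to show the inclusion of the $0$-skeleton into the telescope is an equivalence, identify the restriction of the comparison map to that skeleton as the ``constant in one simplicial direction'' inclusion $N_{\ell}\,\mathcal{C}_x \to N_{\ell}N_{\ell}\,\mathcal{C}_x$ split by the diagonal $\varphi_{\bullet}$ of Lemma~\ref{lem:bisimp_vs_simp}, and conclude via the resulting commutative triangle and two-out-of-three. Your write-up is, if anything, slightly more careful about the telescope-invariance and the explicit unwinding of the map than the paper's own proof.
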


\begin{proof} \label{pf:telescope}
$\theta$ yields a natural transformation $\id{\mathcal{C}_x} \to S_{\otimes}$. Therefore $\lvert N_{\bullet}S_{\otimes} \rvert$ is homotopic to the identity and the map $\lvert N_{\bullet}\mathcal{C}_x \rvert \to \Tel(\lvert N_{\bullet}\mathcal{C}_x \rvert)$ onto the $0$-skeleton is a homotopy equivalence. Likewise, $\lvert N_{\bullet} \mathcal{C}_x \rvert \to \lvert N_{\bullet} N_{\bullet} \mathcal{C}_x \rvert$ induced by the simplicial map $N_{\ell}\,\mathcal{C}_x \to N_{\ell}N_{\ell}\,\mathcal{C}_x$ sending a diagram to the corresponding square that has $\ell$ copies of the diagram in its rows and only identities as vertical maps is a homotopy equivalence by Lemma \ref{lem:bisimp_vs_simp} since composition with $\varphi_{\bullet}$ yields the identity on $N_{\ell}\,\mathcal{C}_x$. The statement now follows from the commutative triangle
\[
	\xymatrix{
	\Tel(\lvert N_{\bullet}\mathcal{C}_x \rvert) \ar[rr] & & \lvert N_{\bullet} N_{\bullet} \mathcal{C}_x \rvert \\
	& \lvert N_{\bullet} \mathcal{C}_x \rvert \ar[ur]_-{\simeq} \ar[ul]^-{\simeq} 
	}
\]
which finishes the proof.
\end{proof}

\begin{theorem} \label{thm:Schlichtkrull_vs_Segal}
Let $x \in {\rm obj}(\mathcal{C})$ and let $\theta \colon \I \to \mathcal{C}_x$ be a stabilization of $x$. Let $G = A_x$. There is a map of $\Gamma$-spaces 
\[
	\Phi_\theta \colon \Gamma(B_{\nu}G) \to \Gamma(\mathcal{C}_x)\ .	
\]
If $G$ is convergent and there exists an unbounded non-decreasing sequence of natural numbers $\lambda_n$, such that $B_{\nu}G(\mathbf{m}) \to \lvert N_{\bullet} \mathcal{C}_x \lvert$ is $\lambda_n$-connected for all $m > n$, then $\Phi_{\theta}$ is an equivalence as well.
%If $\mathcal{C}_{x} \to \mathcal{C}$ is a natural equivalence of categories, $G(\mathbf{1}) \to {\rm End}_\mathcal{C}(x)$ is an equivalence and the restriction map ${\rm End}_{\mathcal{C}}(x) \to \hom_{\mathcal{C}}(1,x)$ induced by $\theta(\mathbf{0} \to \mathbf{1})$ is an equivalence, then $\Phi_\theta$ is an equivalence as well. 
\end{theorem}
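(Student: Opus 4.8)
The plan is to construct $\Phi_\theta$ levelwise from the inclusions of the automorphism groupoids into $\mathcal{C}_x$, organized coherently by the stabilization $\theta$, and then to prove it is an equivalence by reducing to the value at $S^0$, where it becomes a telescope comparison governed by the connectivity hypothesis.

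First I would construct $\Phi_\theta$. For a finite pointed set $S$ with $\bar{S} = \{1,\dots,s\}$, recall $\Gamma(B_{\nu}G)(S) = \hocolim_{D(S)} B_{\nu}G^{(s)}\circ \pi_S$, whose points over an object lying above $(\mathbf{n}_1,\dots,\mathbf{n}_s)\in\I^s$ are tuples of simplices in the classifying spaces $B_{\nu}G(\mathbf{n}_i) = B\,\mathrm{Aut}_{\mathcal{C}}(x^{\otimes n_i})$, i.e.\ tuples of automorphisms. I would send such data to the object of the Segal category $\mathcal{A}_{\mathcal{C}_x}(S)$ with $x_U = \bigotimes_{i\in U} x^{\otimes n_i}$, structure isomorphisms $\alpha_{U,V}$ the canonical symmetry maps of $\mathcal{C}$, and morphisms the tensor products of the given automorphisms. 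The functoriality of the homotopy colimit in $D(S)$, i.e.\ the action of morphisms $\alpha$ in $\I$, is matched on the target by the stabilization morphisms $\theta(\alpha)$: by Definition \ref{def:stabilization} each $\theta(\alpha)$ intertwines $f$ with $\alpha_*(f)$, and so supplies exactly the natural isomorphism in $\mathcal{C}_x$ needed for compatibility. This defines a natural transformation $\Phi_\theta\colon \Gamma(B_{\nu}G)\to\Gamma(\mathcal{C}_x)$, and since both the source homotopy colimit and the Segal category decompose coordinatewise, $\Phi_\theta$ commutes with the Segal projection maps.

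Second, I would reduce to $S = S^0$. The $\Gamma$-space $\Gamma(B_{\nu}G)$ is special because $B_{\nu}G$ is a commutative $\I$-monoid (as in the proof of Theorem \ref{thm:EHI_group}, via \cite[Lemma 5.1]{paper:Schlichtkrull} the map $\Gamma(B_{\nu}G)(S)\to (B_{\nu}G_{h\I})^{s}$ is an equivalence), and Segal's construction $\Gamma(\mathcal{C}_x)$ is special with $\Gamma(\mathcal{C}_x)(S)\to \lvert N_\bullet\mathcal{C}_x\rvert^{s}$ an equivalence. Since $\Phi_\theta$ respects these Segal maps, $\Phi_\theta(S)$ is an equivalence for all $S$ as soon as $\Phi_\theta(S^0)$ is; thus it suffices to treat the single map $\Phi_\theta(S^0)\colon B_{\nu}G_{h\I}\to \lvert N_\bullet\mathcal{C}_x\rvert$. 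Next I would identify this map with an explicit telescope comparison. Since $G$ is convergent, $\Tel(B_{\nu}G)\to B_{\nu}G_{h\I}$ is an equivalence by \cite[Lemma 2.1]{paper:Schlichtkrull}. The inclusions $B_{\nu}G(\mathbf{n}) = B\,\mathrm{Aut}_{\mathcal{C}}(x^{\otimes n})\to \lvert N_\bullet\mathcal{C}_x\rvert$ commute with the stabilization maps on the source and $\lvert N_\bullet S_\otimes\rvert$ on the target, hence assemble into a map $\Tel(B_{\nu}G)\to \Tel(\lvert N_\bullet\mathcal{C}_x\rvert)$; by Lemmas \ref{lem:telescope} and \ref{lem:bisimp_vs_simp} the target telescope is equivalent to $\lvert N_\bullet\mathcal{C}_x\rvert$, and under these identifications the composite is $\Phi_\theta(S^0)$. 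On homotopy groups this becomes $\colim_n\bigl[\pi_k B_{\nu}G(\mathbf{n})\to \pi_k \lvert N_\bullet\mathcal{C}_x\rvert\bigr]$; the hypothesis that $B_{\nu}G(\mathbf{m})\to \lvert N_\bullet\mathcal{C}_x\rvert$ is $\lambda_n$-connected for $m>n$ with $\lambda_n\to\infty$ forces each map to be an isomorphism once $n$ is large relative to $k$, so the colimit is an isomorphism and $\Phi_\theta(S^0)$ is a weak equivalence.

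I expect the main obstacle to be the first step: turning the coherence supplied by $\theta$ (which only renders the two maps $B_{\nu}G(\mathbf{m})\to \lvert N_\bullet\mathcal{C}_x\rvert$ homotopic rather than equal) into an honest, $S$-natural map of $\Gamma$-spaces out of the homotopy colimit, and verifying its compatibility with the Segal projections. This is precisely where the double-nerve bookkeeping of Lemmas \ref{lem:bisimp_vs_simp} and \ref{lem:telescope} must be threaded through carefully, rather than invoked only at the end. Once the map is in place, the reduction to $S^0$ and the connectivity computation are comparatively formal.
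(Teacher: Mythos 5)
Your proposal is correct and follows essentially the same route as the paper's proof: the map is built by sending tuples of automorphisms (points of the transport-category model for the homotopy colimit) to automorphisms of Segal-category objects with coherence data supplied by $\theta$, the equivalence question is reduced via specialness of both $\Gamma$-spaces to the single map $B_{\nu}G_{h\I} \to \lvert N_{\bullet}\mathcal{C}_x\rvert$ at $S = S^0$, and that map is analyzed through the telescope comparison using Lemmas \ref{lem:bisimp_vs_simp} and \ref{lem:telescope} together with the connectivity hypothesis. The ``main obstacle'' you flag is exactly what the paper resolves by constructing an explicit bisimplicial map $N_{\ell}\mathcal{T}_G^{(r)}(S) \to N_{\ell}N_r\mathcal{A}_{\mathcal{C}}(S)$ into the double nerve (whose extra simplicial direction absorbs the morphisms $\theta(\varphi_U)$) and then checking naturality in $S$ by hand.
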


\begin{proof} \label{pf:Schlichtkrull_vs_Segal}
Lemma~\ref{lem:commutes} yields a homeomorphism $\Gamma(B_{\nu}G)(S) \cong \lvert \Gamma(G^{(\bullet)})(S) \rvert$ with 

\noindent $\Gamma(G^{(r)})(S) = \hocolim_{D(S)} G^{(r,s)} \circ \pi_S$. The last space is the geometric realization of the transport category $\mathcal{T}^{(r)}_G(S)$ with object space $\coprod_{d \in {\rm obj}(D(S))} G^{(r,s)}(\pi_S(d))$ and morphism space 
\[
	{\rm mor}(\mathcal{T}^{(r)}_G(S)) = \coprod_{d,d' \in {\rm obj}(D(S))} G^{(r,s)}(\pi_S(d)) \times \hom_{D(S)}(d,d')\ .	
\]
Given a diagram $d \in {\rm obj}(D(S))$, we obtain $\mathbf{n}_U = d(U) \in {\rm obj}(\I)$ for every $U \subset \bar{S}$ and define $\mathbf{n}_i = d(\{i\})$. Let $(g_1, \dots g_s) \in G^{(r)}(\mathbf{n}_1) \times \dots \times G^{(r)}(\mathbf{n}_s)$. We may interpret $g_i$ as an $r$-tupel $(g_{j,i})_{j \in \{1, \dots, r\}}$ of automorphisms $g_{j,i} \in G(\mathbf{n}_i) = {\rm Aut}_{\mathcal{C}}(\theta(\mathbf{n}_i))$. If $U,V \subset \bar{S}$ are two subsets with $U \cap V = \emptyset$, then $d$ yields two morphisms $\mathbf{n}_U \to \mathbf{n}_{U \cup V}$ and $\mathbf{n}_V \to \mathbf{n}_{U \cup V}$, which form an isomorphism $\iota_{U,V} \colon \mathbf{n}_U \sqcup \mathbf{n}_V \to \mathbf{n}_{U \cup V}$ in $\I$. Let $\alpha_{U,V} = \theta(\iota_{U,V}) \colon \theta(\mathbf{n}_U) \otimes \theta(\mathbf{n}_V) \to \theta(\mathbf{n}_{U \cup V})$. Let $\iota^i_U \colon \mathbf{n}_i \to \mathbf{n}_U$ be induced by the inclusion $\{i\} \subset U$ and define $g_{j,U} = \prod_{i \in U} \iota^i_{U*}(g_{j,i})$. This does not depend on the order, in which the factors are multiplied. The elements $g_{j,U}$ fit into a commutative diagram
\[
\xymatrix{
	\theta(\mathbf{n}_U) \otimes \theta(\mathbf{n}_V) \ar[rr]^-{\alpha_{U,V}} \ar[d]_-{g_{j,U} \otimes g_{j,V}} && \theta(\mathbf{n}_{U \cup V}) \ar[d]^-{g_{j,U \cup V}} \\
	\theta(\mathbf{n}_U) \otimes \theta(\mathbf{n}_V) \ar[rr]_-{\alpha_{U,V}} && \theta(\mathbf{n}_{U \cup V})
}
\]
Thus, we can interpret the families $g_{j,U}$ for $j \in \{1, \dots, r\}$ as an $r$-chain of automorphisms of the object $(\theta(\mathbf{n}_U), \alpha_{U,V})$ in $\mathcal{A}_{\mathcal{C}}(S)$.

Let $d'$ be another diagram, let $\mathbf{m}_U = d'(U)$, $\iota'_{U,V} \colon \mathbf{m}_U \sqcup \mathbf{m}_V \to \mathbf{m}_{U \cup V}$ and let $\beta_{U,V} = \theta(\iota'_{U,V})$. A natural transformation $d \to d'$ consists of morphisms $\varphi_U \colon \mathbf{n}_U \to \mathbf{m}_U$ for every $U \in \mathcal{P}(\bar{S})$. Let $f_U = \theta(\varphi_U)$, then the following diagrams commute:
\[
	\xymatrix{
		\theta(\mathbf{n}_U) \ar[rr]^-{g_{j,U}} \ar[d]_-{f_U} && \theta(\mathbf{n}_U) \ar[d]^-{f_U} \\
		\theta(\mathbf{m}_U) \ar[rr]_-{\varphi_{U*}(g_{j,U})} && \theta(\mathbf{m}_U)
	} \qquad 
	\xymatrix{
	\theta(\mathbf{n}_U) \otimes \theta(\mathbf{n}_V) \ar[rr]^-{\alpha_{U,V}} \ar[d]_-{f_U \otimes f_V} && \theta(\mathbf{n}_{U \cup V}) \ar[d]^-{f_{U \cup V}}\\
	\theta(\mathbf{m}_U) \otimes \theta(\mathbf{m}_V) \ar[rr]_-{\beta_{U,V}} && \theta(\mathbf{m}_{U \cup V})	
	}
\]
the first since $\theta$ is a stabilization, the second by naturality of the transformation. This is compatible with respect to composition of natural transformations of diagrams. Altogether we have constructed a bisimplicial map $N_{\ell}\mathcal{T}^{(r)}_G(S) \to N_{\ell}N_r\mathcal{A}_{\mathcal{C}}(S)$. Combining this with $N_{\ell}N_{\ell}\mathcal{A}_{\mathcal{C}}(S) \to N_{\ell}\mathcal{A}_{\mathcal{C}}(S)$ from Lemma \ref{lem:bisimp_vs_simp}, we obtain $\Gamma(B_{\nu}G)(S) \to \Gamma(\mathcal{C})(S)$ after geometric realization.

It remains to be proven that this map is functorial with respect to morphisms $\kappa \colon S \to T$ in $\Gamma^{op}$. Recall that 
\[
	N_{\ell}\mathcal{T}_G^{(r)}(S) = \coprod_{d_1, \dots, d_{\ell} \in {\rm obj}(D(S))} \prod_{s \in S} G^{(r)}(d_1(\{s\})) \times \hom_{D(S)}(d_1,d_2) \times \dots \times \hom_{D(S)}(d_{\ell-1},d_{\ell})
\]
Let $\kappa_* \colon N_{\ell}\mathcal{T}_G^{(r)}(S) \to N_{\ell}\mathcal{T}_G^{(r)}(T)$ be the induced map as defined in \cite[section~5.2]{paper:Schlichtkrull}. We have 
\[
	\kappa_*\left((g_{j,i})_{i \in S, j \in \{0,\dots,r\}}, \varphi^1, \dots, \varphi^{\ell-1}\right) = \left((h_{j,k})_{k \in T, j \in \{0,\dots,r\}}, \kappa_*\varphi^1, \dots, \kappa_*\varphi^{\ell-1}\right)
\]
where  $h_{j,k} = \prod_{i \in \kappa^{-1}(k)} \iota^i_{\kappa^{-1}(k)*}(g_{j,i})$. If the left hand side lies in the component $(d_1, \dots, d_{\ell})$, then the right hand side is in $(\kappa_*d_1, \dots, \kappa_*d_{\ell})$ with $(\kappa_*d_m)(V) = d_m(\kappa^{-1}(V))$ for $V \subset \bar{T}$. Likewise $(\kappa_*\varphi^m)_V = \varphi^m_{\kappa^{-1}(V)}$. The functor $\kappa_* \colon \mathcal{A}_{\mathcal{C}}(S) \to \mathcal{A}_{\mathcal{C}}(T)$ sends the object $(x_U, \alpha_{U,V})_{U,V \subset \bar{S}}$ to $(x_{\kappa^{-1}(\tilde{U})}, \alpha_{\kappa^{-1}(\tilde{U}), \kappa^{-1}(\tilde{V})})_{\tilde{U}, \tilde{V} \subset \bar{T}}$ and is defined analogously on morphisms. Observe that the composition $N_{\ell}\mathcal{T}_G^{(r)}(S) \to N_{\ell}\mathcal{T}_G^{(r)}(T) \to N_{\ell}N_r\mathcal{A}_{\mathcal{C}}(T)$ maps $(g_{j,i})_{i \in S, j\in \{0,\dots,r\}}$ to the $r$-chain of automorphisms given by $h_{j,V} \colon \theta(\kappa_*d(V)) \to \theta(\kappa_*d(V)) $ with
\[
h_{j,V} = \prod_{k \in V} \prod_{i \in \kappa^{-1}(k)} (\kappa_*d)(\iota^k_V) \circ d(\iota^{i}_{\kappa^{-1}(k)}) (g_{j,i}) = \prod_{i \in \kappa^{-1}(V)} d(\iota^i_{\kappa^{-1}(V)})(g_{j,i}) = g_{j,\kappa^{-1}(V)}
\]
for $V \subset \bar{T}$. The transformations $\varphi^m$ are mapped to $\theta(\varphi^m_{\kappa^{-1}(V)}) = f^m_{\kappa^{-1}(V)}$. This implies the commutativity of 
\[
	\xymatrix{
	N_{\ell}\mathcal{T}_G^{(r)}(S) \ar[rr]^{\kappa_*} \ar[d]_{\Phi_{\theta}} & & N_{\ell}\mathcal{T}_G^{(r)}(T) \ar[d]^{\Phi_{\theta}}\\
	N_{\ell}N_r\mathcal{A}_{\mathcal{C}}(S) \ar[rr]^{\kappa_*} & & N_{\ell}N_r\mathcal{A}_{\mathcal{C}}(T)
	}
\]
and therefore functoriality after geometric realization.

To see that $\Phi_{\theta}$ is an equivalence for each $S$ it suffices to check that $B_{\nu}G_{h\I} \to \lvert N_{\bullet} \mathcal{C} \rvert$ is a homotopy equivalence due to the following commutative diagram
\[
	\xymatrix{
		\Gamma(B_{\nu}G)(S) \ar[r] \ar[d]_-{\simeq} & \Gamma(\mathcal{C})(S) \ar[d]^-{\simeq}\\
	 	\left(B_{\nu}G_{h\I}\right)^s \ar[r] & \lvert N_{\bullet} \mathcal{C} \rvert^s
	}
\]
Let $\Tel(\lvert N_{\bullet} \mathcal{C}_x \rvert)$ be the telescope of $\lvert N_{\bullet}S_{\otimes} \rvert$ as defined above. The conditions on the maps $B_{\nu}G(\mathbf{n}) \to \lvert N_{\bullet} \mathcal{C}_x \rvert$ ensure that $\Tel(B_{\nu}G) \to \Tel(\lvert N_{\bullet} \mathcal{C}_x \rvert)$ is a homotopy equivalence. Now we have the following commutative diagram
\[
	\xymatrix{
	B_{\nu}G_{h\I} \ar[r] & \lvert N_{\bullet} N_{\bullet} \mathcal{C}_x \rvert \ar[r]^{\simeq} & \lvert N_{\bullet}\mathcal{C}_x \rvert \\
	\Tel(B_{\nu}G) \ar[r]^-{\simeq} \ar[u]^-{\simeq} & \Tel(\lvert N_{\bullet}\mathcal{C}_x \rvert) \ar[u]_-{\simeq}
	}
\]
in which the upper horizontal map is the one we are looking for. This finishes the proof. 
%By Lemma \ref{lem:Hocolim_is_BG}, the map $B_{\nu}G(\mathbf{1}) \to B_{\nu}G_{h\I}$ is an equivalence, which reduces the problem to showing that the composite map $B_{\nu}G(\mathbf{1}) \to B_{\nu}G_{h\I} \to \lvert N_{\bullet} \mathcal{C} \rvert$ is an equivalence. This is in fact induced by the inclusion $G(\mathbf{1}) \to {\rm End}_{\mathcal{C}}(x)$. Thus, this map is an equivalence by our assumptions on $\mathcal{C}$.
\end{proof}

\section{Strongly self-absorbing $C^*$-algebras and $gl_1(KU^A)$}
A $C^*$-algebra $A$ is called \emph{strongly self-absorbing} if it is separable, unital and there exists a $*$-isomorphism $\psi \colon A \to A\otimes A$ such that $\psi$ is approximately unitarily equivalent to the map $l \colon A \to A\otimes A$, $l(a)=a\otimes 1_A$ \cite{paper:TomsWinter}. This means that there is a sequence of unitaries $(u_n)$ in $A$ such that $\|u_n \varphi(a)u_n^*-l(a)\|\to 0$ as $n\to \infty$ for all $a\in A$. In fact, it is a consequence of \cite[Theorem~2.2]{paper:DadarlatKK1} and \cite{paper:WinterZStable} that $\psi$, $l$ and $r \colon A \to A \otimes A$ with $r(a) = 1_A \otimes a$ are homotopy equivalent and in fact the group
$\Aut{A}$ is contractible \cite{paper:DadarlatP1}. The inverse isomorphism $\psi^{-1}$ equips $K_{\ast}(C(X) \otimes A)$ with a ring structure induced by the tensor product. By homotopy invariance of $K$-theory, the $K_0$-class of the constant map on $X$ with value $1 \otimes e$ for a rank $1$-projection $e \in \K$ is the unit of this ring structure. To summarize: Given a separable, unital, strongly self-absorbing $C^*$-algebra $A$, the functor $X \mapsto K_*(C(X) \otimes A)$ is a multiplicative cohomology theory on finite CW-complexes. 

\subsection{A commutative symmetric ring spectrum representing $K$-theory}
A $C^*$-alge\-bra $B$ is \emph{graded}, if it comes equipped $\Z/2\Z$-action, i.e.\ a $*$-automorphism $\alpha \colon B \to B$, such that $\alpha^2 = \id{B}$. A \emph{graded homomorphism} $\varphi \colon (B,\alpha) \to (B',\alpha')$ has to satisfy $\varphi \circ \alpha = \alpha' \circ \varphi$. The algebraic tensor product $B \odot B'$ can be equipped with the multiplication and $*$-operation
\[
	(a\, \widehat{\otimes} \,b)(a'\, \widehat{\otimes}\, b') = (-1)^{\partial b \cdot \partial a' } (aa' \,\widehat{\otimes}\, bb') \quad \text{and} \quad (a\, \widehat{\otimes}\, b)^* = (-1)^{\partial a \cdot \partial b} (a^* \,\widehat{\otimes} \,b^*)
\]
where $a,a' \in B$ and $b,b' \in B'$ are homogeneous elements and $\partial a$ denotes the degree of $a$. It is graded via $\partial (a \,\widehat{\otimes} \,b) = \partial a + \partial b$ modulo $2$. The (minimal) graded tensor product $B\,\widehat{\otimes}\,B'$ is the completion of $B \odot B'$ with respect to the tensor product of faithful representations of $B$ and $B'$ on graded Hilbert spaces. For details we refer the reader to \cite[section 14.4]{book:Blackadar}. 

We define $\widehat{S} = C_0(\R)$ with the grading by even and odd functions. The Clifford algebra $\C\ell_1$ will be spanned by the even element $1$ and the odd element $c$ with $c^2 = 1$. The algebra $\widehat{\K}$ will denote the graded compact operators on a graded Hilbert space $H = H_0 \oplus H_1$ with grading ${\rm Ad}_u$, $u = \left(\begin{smallmatrix} 1 & 0 \\ 0 & -1 \end{smallmatrix}\right)$, whereas we will use $\K$ for the \emph{trivially} graded compact operators. If we take tensor products between a graded $C^*$-algebra and a trivially graded one, e.g.\ $\C\ell_1 \widehat{\otimes} \K$, we will write $\otimes$ instead of $\widehat{\otimes}$. It is a consequence of \cite[Corollary 14.5.5]{book:Blackadar} that 
\(
	(\C\ell_1 \otimes \K) \,\widehat{\otimes}\,(\C\ell_1 \otimes \K) \cong M_2(\C) \otimes \K \cong \widehat{\K}
\),
where $M_2(\C)$ is graded by ${\rm Ad}_u$ with $u = \left(\begin{smallmatrix} 1 & 0 \\ 0 & -1 \end{smallmatrix}\right)$. 

As is explained in \cite[section 3.2]{paper:FunctorialKSpectrum}, \cite[section 4]{paper:Joachim}, the algebra $\widehat{S}$ carries a counital, coassociative and cocommutative coalgebra structure. This arises from a $1:1$-correspondence between essential graded $*$-homomorphisms $\widehat{S} \to A$ and odd, self-adjoint, regular unbounded multipliers of $A$ \cite[Proposition 3.1]{paper:Trout}. Let $X$ be the multiplier corresponding to the identity map on $\widehat{S}$, then the comultiplication $\Delta \colon \widehat{S} \to \widehat{S}\,\otimes \widehat{S}$ is given by $1\,\widehat{\otimes}\,X + X\,\widehat{\otimes}\,1$, whereas the counit $\epsilon \colon \widehat{S} \to \C$ corresponds to $0 \in \C$, i.e.\ it maps $f \mapsto f(0)$. 

\begin{definition} \label{def:KUA_spectrum}
Let $A$ be a separable, unital, strongly self-absorbing and trivially graded $C^*$-algebra. Let $KU^A_{\bullet}$ be the following sequence of spaces
\[
	KU^A_n = \hom_{\rm gr}(\widehat{S}, (\C\ell_1 \otimes A \otimes \K)^{\widehat{\otimes} n})\ ,
\]
where the graded homomorphisms are equipped with the point-norm topology.
\end{definition}

$KU^A_n$ is pointed by the $0$-homomorphism and carries a basepoint preserving $\Sigma_n$-action by permuting the factors of the graded tensor product (this involves signs!). We set $B^{\widehat{\otimes} 0} = \C$ and observe that $KU^A_0$ is the two-point space consisting of the $0$-homomorphism and the evaluation at $0$, which is the counit of the coalgebra structure on $\widehat{S}$.

Let $\mu_{m,n}$ be the following family of maps 
\[
	\mu_{m,n} \colon KU^A_m \wedge KU^A_n \to KU^A_{m+n} \quad ; \quad \varphi \wedge \psi \mapsto (\varphi \,\widehat{\otimes}\, \psi) \circ \Delta
\]
To construct the maps $\eta_n \colon S^n \to KU^A_n$, note that $t \mapsto t\,c$ is an odd, self-adjoint, regular unbounded multiplier on $C_0(\R, \C\ell_1)$. Therefore the functional calculus for this multiplier is a graded $*$-homomorphism $\widehat{S} \to C_0(\R, \C\ell_1)$. This in turn can be seen as a basepoint preserving map $S^1 \to \hom_{\rm gr}(\widehat{S}, \C\ell_1)$. Now consider
\[
	C_0(\R, \C\ell_1) \to C_0(\R, \C\ell_1 \otimes A \otimes \K) \quad ; \quad f \mapsto f \otimes (1 \otimes e)\ ,
\]
where $e$ is a rank $1$-projection in $\K$. After concatenation, we obtain a graded $*$-homomor\-phism $\hat{\eta}_1 \colon \widehat{S} \to C_0(\R, \C\ell_1 \otimes A \otimes \K)$ and from this a continuous map $\eta_1 \colon S^1 \to \hom_{\rm gr}(\widehat{S}, \C\ell_1 \otimes A \otimes \K)$. Now let 
\[
	\hat{\eta}_n \colon \widehat{S} \to C_0(\R^n, (\C\ell_1 \otimes A \otimes \K)^{\widehat{\otimes} n}) \quad \text{with } \hat{\eta}_n = (\hat{\eta}_1 \widehat{\otimes} \dots \widehat{\otimes}\,\hat{\eta}_1) \circ \Delta_n\ ,
\]
where $\Delta_n \colon \widehat{S} \to \widehat{S}^{\,\widehat{\otimes} n}$ is defined recursively by $\Delta_1 = \id{\widehat{S}}$, $\Delta_2=\Delta$ and $\Delta_n = (\Delta\,\widehat{\otimes}\,\id{})\circ \Delta_{n-1}$, for $n\geq 3$. This yields a well-defined map $\eta_n \colon S^n \to KU^A_n$.

\begin{theorem} \label{thm:KUA_spectrum}
Let $A$ be a separable, unital, strongly self-absorbing $C^*$-algebra. The spaces $KU^A_{\bullet}$ together with the maps $\mu_{\bullet,\bullet}$ and $\eta_{\bullet}$ form a commutative symmetric ring spectrum with coefficients
\[
	\pi_n(KU^A_{\bullet}) = K_n(A)\ .
\]
Moreover, all structure maps $KU^A_n \to \Omega KU^A_{n+1}$ are weak homotopy equivalences for $n \geq 1$.
\end{theorem}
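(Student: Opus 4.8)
The plan is to establish the three assertions in turn: first that $KU^A_{\bullet}$ satisfies the axioms of Definition~\ref{def:symmetric_spectrum}, then that the structure maps are weak equivalences above level zero, and finally to read off the coefficients from the second step. Verifying the spectrum axioms is a formal consequence of the counital, coassociative and cocommutative coalgebra structure on $\widehat{S}$ recalled before Definition~\ref{def:KUA_spectrum}, combined with the symmetry of the graded tensor product. The $\Sigma_m \times \Sigma_n$-equivariance of $\mu_{m,n}$ and the $\Sigma_n$-equivariance of $\eta_n$ are built into the definitions, the symmetric groups permuting the tensor factors of $(\C\ell_1 \otimes A \otimes \K)^{\widehat{\otimes} n}$. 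For associativity I would note that both composites in Definition~\ref{def:symmetric_spectrum}(a) send $\varphi \wedge \psi \wedge \chi$ to $(\varphi \,\widehat{\otimes}\, \psi \,\widehat{\otimes}\, \chi) \circ \Delta_3$, using associativity of $\widehat{\otimes}$ together with the coassociativity identity $(\Delta \,\widehat{\otimes}\, \id{}) \circ \Delta = (\id{} \,\widehat{\otimes}\, \Delta) \circ \Delta = \Delta_3$; compatibility (b) is precisely the recursive definition $\hat{\eta}_n = (\hat{\eta}_1 \,\widehat{\otimes}\, \dots \,\widehat{\otimes}\, \hat{\eta}_1) \circ \Delta_n$; and commutativity (c) follows from cocommutativity ${\rm tw} \circ \Delta = \Delta$ once one checks that the block permutation $\tau_{m,n}$ on the target implements the graded flip of the two $\widehat{S}$-factors. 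The one point requiring attention is the bookkeeping of Koszul signs, but since $A \otimes \K$ is trivially graded all signs involve only the $\C\ell_1$-factors and are dictated by the symmetry of the graded tensor category.

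The heart of the matter is the second assertion, that $KU^A_n \to \Omega KU^A_{n+1}$ is a weak equivalence for $n \geq 1$. This map is adjoint to $\mu_{1,n} \circ (\eta_1 \wedge \id{KU^A_n})$, i.e.\ to left multiplication by the class $\eta_1 \in \hom_{\rm gr}(\widehat{S}, \C\ell_1 \otimes A \otimes \K)$, which is the Bott generator coming from the functional calculus of $t \mapsto t\,c$. Under the identification of $\hom_{\rm gr}(\widehat{S}, -)$ with the K-theory space functor of \cite{paper:HigsonGuentner, paper:Joachim, paper:FunctorialKSpectrum, paper:Trout}, this map is exactly the Bott map given by graded tensoring with $\C\ell_1$, and its being a weak equivalence is the Bott periodicity theorem in the $\widehat{S}$-picture. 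The hypothesis $n \geq 1$ enters because for $n \geq 1$ the fibre $(\C\ell_1 \otimes A \otimes \K)^{\widehat{\otimes} n}$ is graded stable, so that $\hom_{\rm gr}(\widehat{S}, -)$ of it is a genuine K-theory space to which periodicity applies, whereas for $n = 0$ the fibre is $\C$ and $KU^A_0$ is merely the two-point space, which is correctly excluded.

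For the coefficients I would combine the two previous steps. Since $KU^A_{\bullet}$ is an $\Omega$-spectrum above level zero it is in particular semistable, so $\pi_k(KU^A_{\bullet}) = \pi_{n+k}(KU^A_n)$ for any $n \geq 1$ with $n + k \geq 0$, and it suffices to compute the latter for a single such $n$. Strong self-absorption now does the work: because $A^{\otimes n} \cong A$ and $\K^{\otimes n} \cong \K$, and because the $A \otimes \K$-factors are trivially graded so that reorganizing them past the $\C\ell_1$-factors produces no signs, the fibre simplifies as
\[
	(\C\ell_1 \otimes A \otimes \K)^{\widehat{\otimes} n} \cong \C\ell_1^{\widehat{\otimes} n} \otimes A^{\otimes n} \otimes \K^{\otimes n} \cong \C\ell_n \otimes A \otimes \K\ ,
\]
in agreement with the identification $(\C\ell_1 \otimes \K)^{\widehat{\otimes} 2} \cong \widehat{\K}$ recorded above. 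As $\hom_{\rm gr}(\widehat{S}, -)$ represents graded K-theory and each of the $n$ Clifford generators shifts the degree by one (complex Bott periodicity), I obtain $\pi_{n+k}(KU^A_n) \cong K_k(A \otimes \K) \cong K_k(A)$, the last step using $\K$-stability. This gives $\pi_n(KU^A_{\bullet}) = K_n(A)$.

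The step I expect to be the genuine obstacle is the weak equivalence of the structure maps: the first and third assertions are, respectively, a formal computation with the coalgebra structure and an application of strong self-absorption, whereas the $\Omega$-spectrum property rests on the Bott periodicity theorem for the $\widehat{S}$-model of K-theory, which is the one analytically substantial ingredient and must be imported from \cite{paper:HigsonGuentner, paper:Joachim, paper:FunctorialKSpectrum}.
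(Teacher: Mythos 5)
Your first and third paragraphs follow the paper's own route (formal verification from the coalgebra structure on $\widehat{S}$, then computation of homotopy groups via the identification of $\pi_0\hom_{\rm gr}(\widehat{S}, -\,\widehat{\otimes}\,\widehat{\K})$ with $KK(\C,-)$), but your second step --- which you rightly call the heart of the matter --- has a genuine gap. The class $\hat{\eta}_1$ is \emph{not} just the Bott generator: by construction it is the functional calculus homomorphism $\widehat{S} \to C_0(\R,\C\ell_1)$ \emph{followed by} $f \mapsto f \otimes (1\otimes e)$, so it carries a rank-one projection in a new copy of $A \otimes \K$. Consequently the structure map $KU^A_n \to \Omega KU^A_{n+1}$ is not ``exactly the Bott map given by graded tensoring with $\C\ell_1$''; on homotopy groups it is the composition of exterior multiplication by the Bott class (an isomorphism, by periodicity) with the map induced by $x \mapsto x \otimes (1 \otimes e) \colon (A\otimes\K)^{\otimes n} \to (A\otimes\K)^{\otimes(n+1)}$. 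It is this second factor where the hypothesis on $A$ does its work: for strongly self-absorbing $A$ the embedding $a \mapsto a \otimes (1\otimes e)$ of $A \otimes \K$ into $(A\otimes\K)^{\otimes 2}$ is homotopic to a $*$-isomorphism (this is the input from \cite{paper:DadarlatP1} used in the paper's proof), and only for that reason does it induce an isomorphism on $K$-theory. Your argument never invokes this. As written, it would apply verbatim to $A = M_2$, whose fibres $(\C\ell_1 \otimes M_2 \otimes \K)^{\widehat{\otimes}n}$ are perfectly graded stable for $n \geq 1$; yet there the structure map induces multiplication by $[1_{M_2}\otimes e] = 2 \in K_0(M_2\otimes\K) \cong \Z$ on homotopy groups and is not a weak equivalence. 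This is precisely the point the authors flag in the remark following their proof: strong self-absorption, not Bott periodicity alone, is what makes the spectrum an $\Omega$-spectrum above level $0$.

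A secondary consequence is that your closing paragraph assigns the ingredients to the wrong steps: strong self-absorption is essential for the $\Omega$-spectrum property itself, not merely for the coefficient identification, and since your coefficient computation rests on semistability, which you derive from the $\Omega$-spectrum property, the gap propagates to your third step as well. The repair is the paper's argument: factor the class of $\hat{\eta}_1$ as the exterior product of the Bott element with the class of $1 \otimes e$, quote Bott periodicity for the first factor, and quote the homotopy between $a \mapsto a \otimes (1\otimes e)$ and an isomorphism --- available exactly because $A$ is \emph{strongly} self-absorbing --- for the second.
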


\begin{proof}\label{pf:KUA_spectrum}
It is a consequence of the cocommutativity of $\Delta$ that $\Delta_n$ is $\Sigma_n$-invariant, i.e.\ for a permutation $\tau \in \Sigma_n$ and its induced action $\tau_* \colon \widehat{S}^{\,\widehat{\otimes} n} \to \widehat{S}^{\,\widehat{\otimes} n}$ we have $\tau_* \circ \Delta_n = \Delta_n$. If we let $\tau$ act on 
\[
	C_0(\R^n, (\C\ell_1 \otimes A \otimes \K)^{\,\widehat{\otimes} n}) \cong (C_0(\R, \C\ell_1 \otimes A \otimes \K))^{\widehat{\otimes} n}
\]	
by permuting the tensor factors, then we have that $\hat{\eta}_n$ is invariant under the action of $\Sigma_n$ since
\[
	\tau_* \circ \hat{\eta}_n = \tau_* \circ (\hat{\eta}_1 \widehat{\otimes} \dots \widehat{\otimes}\,\hat{\eta}_1) \circ \Delta_n = (\hat{\eta}_1 \widehat{\otimes} \dots \widehat{\otimes}\,\hat{\eta}_1) \circ \tau_* \circ \Delta_n = \hat{\eta}_n\ ,
\]
which proves that $\eta_n$ is $\Sigma_n$-\emph{equi}variant. The $\Sigma_m \times \Sigma_n$-equivariance of $\mu_{m,n}$ is clear from the definition of the $\Sigma_{m+n}$-action on $KU^A_{m+n}$ and the symmetry of the graded tensor product. Associativity of $\mu_{\bullet,\bullet}$ (see Definition \ref{def:symmetric_spectrum} (a)) is a direct consequence of the coassociativity of $\widehat{S}$ and the associativity of the graded tensor product. The map $\mu_{m,n}(\eta_m \wedge \eta_n)$ corresponds to the $*$-homomorphism
\[
	(\hat{\eta}_m \widehat{\otimes} \,\hat{\eta}_n) \circ \Delta = \underbrace{\hat{\eta}_1 \widehat{\otimes} \dots \widehat{\otimes}\,\hat{\eta}_1}_{n+m \text{ times}} \circ (\Delta_m \widehat{\otimes}\, \Delta_n) \circ \Delta\ .
\]
By coassociativity we have $(\Delta_m \widehat{\otimes}\, \Delta_n) \circ \Delta = \Delta_{m+n}$. Therefore $(\hat{\eta}_m \widehat{\otimes} \,\hat{\eta}_n) \circ \Delta = \hat{\eta}_{m+n}$, which translates into the compatibility condition of Definition \ref{def:symmetric_spectrum}. The commutativity of $\mu_{\bullet, \bullet}$ follows from the definition of the permutation action and the cocommutativity of the coalgebra structure on $\widehat{S}$. Thus, the spaces $KU^A_{\bullet}$ indeed form a commutative symmetric ring spectrum.

To see that the structure maps induce weak equivalences, observe that for $k \geq 1$
\[
	\pi_k(KU^A_n) = \pi_0(\Omega^k KU^A_n) = \pi_0(\hom_{\rm gr}(\widehat{S}, C_0(\R^k, (\C\ell_1 \otimes \K \otimes A)^{\widehat{\otimes} n})))
\]
The algebra $\C\ell_1 \otimes \K$ with trivially graded $\K$ is isomorphic as a graded $C^*$-algebra to $\C\ell_1 \widehat{\otimes} \widehat{\K}$, where $\widehat{\K} = \widehat{\K}(H_+ \oplus H_-)$ is equipped with its standard even grading. Therefore 
\[
	C_0(\R^k, (\C\ell_1 \otimes \K \otimes A)^{\widehat{\otimes} n}) \cong C_0(\R^k, (\C\ell_1 \otimes A)^{\widehat{\otimes} n}) \widehat{\otimes}\,\widehat{\K}^{\widehat{\otimes} n} \cong C_0(\R^k, (\C\ell_1 \otimes A)^{\widehat{\otimes} n}) \widehat{\otimes}\,\widehat{\K}
\] 
and by \cite[Theorem 4.7]{paper:Trout}
\[
	\pi_0(\hom_{\rm gr}(\widehat{S}, C_0(\R^k, (\C\ell_1 \otimes A)^{\widehat{\otimes} n}) \widehat{\otimes}\,\widehat{\K}) \cong KK(\C, C_0(\R^k, (\C\ell_1 \otimes A)^{\widehat{\otimes} n})) \cong K_{k-n}(A)
\]
where we used Bott periodicity and the isomorphism $A^{\otimes n} \cong A$ in the last map.  The structure map $KU^A_n \to \Omega KU^A_{n+1}$ is now given by exterior multiplication with the class in $KK(\C, C_0(\R, \C\ell_1 \otimes A))$ represented by $\widehat{\eta}_1$. But by definition this is a combination of the Bott element together with the map $A \otimes \K \to (A \otimes \K)^{\otimes 2}$ that sends $a$ to $a \otimes (1 \otimes e)$, which is homotopic to an isomorphism, since $A$ is \emph{strongly} self-absorbing. Thus, both operations induce isomorphisms on $K$-theory and therefore also on all homotopy groups. Finally, we have for $n \in \Z$
\[
	\pi_n(KU^A_{\bullet}) = \colim_{k \to \infty} \pi_{n+k}(KU^A_{k}) \cong K_{n}(A)\ . \qedhere
\]
\end{proof}

\begin{remark}
As can be seen from the above proof, the fact that $A$ is \emph{strongly} self-absorbing (and not just self-absorbing, which would be $A \cong A \otimes A$) is important for the spectrum to be positive fibrant.
\end{remark}	
	
\begin{remark}
Note that the spectrum $KU^A_{\bullet}$ is not connective. However, the $\I$-monoid $\units{R}$ of a symmetric ring spectrum $R$ only ``sees'' the connective cover. In fact, $\Omega^{\infty}(R)_{h\I}$ is its underlying infinite loop space. This was already remarked in \cite[Section 1.6]{paper:SagaveSchlichtkrull} and was the motivation for the introduction of $\mathcal{J}$-spaces to also capture periodic phenomena. 
\end{remark}

\subsection{The Eckmann-Hilton $\I$-group $G_A$}
Let $A$ be a separable, unital, strongly self-absorbing algebra and let $G_A(\mathbf{n}) = \Aut{(A \otimes \K)^{\otimes n}}$ (with $(A \otimes \K)^{\otimes 0} := \C$, such that $G_A(\mathbf{0})$ is the trivial group). Note that $\sigma \in \Sigma_n$ acts on $G_A(\mathbf{n})$ by mapping $g \in G_A(\mathbf{n})$ to $\sigma \circ g \circ \sigma^{-1}$, where $\sigma$ permutes the tensor factors of $(A \otimes \K)^{\otimes n}$. 
Let $g \in G_A(\mathbf{m})$ and $\alpha \colon \mathbf{m} \to \mathbf{n}$. We can enhance $G_A$ to a functor $G_A \colon \I \to \mathcal{G}rp_{\ast}$ via
\[
	\alpha_*(g) = \bar{\alpha} \circ (\id{(A \otimes \K)^{\otimes (n-m)}} \otimes g) \circ \bar{\alpha}^{-1}
\]
with $\bar{\alpha}$ as explained after Definition \ref{def:ISpace_IMonoid}. Moreover, $G_A$ is an $\I$-monoid via
\[
	\mu_{m,n} \colon G_A(\mathbf{m}) \times G_A(\mathbf{n}) \to G_A(\mathbf{m} \sqcup \mathbf{n}) \quad ; \quad (g,h) \mapsto g \otimes h\ .
\]
Note that $\mu_{m,0}$ and $\mu_{0,n}$ are induced by the canonical isomorphisms $\C \otimes (A \otimes \K)^{\otimes n} \cong (A \otimes \K)^{\otimes n}$ and $(A \otimes \K)^{\otimes m} \otimes \C \cong (A \otimes \K)^{\otimes m}$ respectively.

\begin{theorem}\label{thm:A_is_EHI}
$G_A$ as defined above is a stable EH-$\I$-group with compatible inverses in the sense of Definition \ref{def:EHI_group}.
\end{theorem}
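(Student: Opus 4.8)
The statement asserts three things about $G_A$: that it is an EH-$\I$-group in the sense of Definition~\ref{def:EHI_group}, that it is stable, and that it has compatible inverses. Write $B := A \otimes \K$. The EH-$\I$-group structure is formal, and I would verify it along the lines of the construction $A_x$ attached to the permutative category of separable $C^*$-algebras under minimal tensor product, with $x = B$. Each $\alpha_*$ is a continuous homomorphism because tensoring with a fixed identity and conjugating by the permutation $\bar\alpha$ are continuous in the point-norm topology; functoriality holds because the failure of $\alpha \mapsto \bar\alpha$ to be functorial only permutes tensor legs carrying identities, exactly as in the paragraph after Definition~\ref{def:ISpace_IMonoid} and in the proof of Lemma~\ref{lem:bij_stabi}. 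The monoid operation $\mu_{m,n}(g,h) = g \otimes h$ is associative and unital by the coherence isomorphisms of the tensor product, and the Eckmann-Hilton square of Definition~\ref{def:EHI_group} is precisely the interchange law $(g \circ g') \otimes (h \circ h') = (g \otimes h) \circ (g' \otimes h')$, valid since $\otimes$ is a bifunctor on $*$-homomorphisms. Commutativity of the $\I$-monoid is then automatic from Lemma~\ref{lem:commutative}.

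For stability I would reduce to a single homotopy equivalence. A morphism $\alpha \colon \mathbf{m} \to \mathbf{n}$ with $m \geq 1$ factors as $\bar\alpha \circ \iota_{m,n}$, and conjugation by the permutation $\bar\alpha$ is a homeomorphism of $G_A(\mathbf{n})$; so $\alpha_*$ is a homotopy equivalence iff $(\iota_{m,n})_*$ is, and decomposing $\iota_{m,n}$ into one-step inclusions reduces the problem (after identifying $(A \otimes \K)^{\otimes m} \cong B$ for $m \geq 1$) to showing that
\[
	\Aut{B} \longrightarrow \Aut{B \otimes B}, \qquad h \mapsto \id{B} \otimes h ,
\]
is a homotopy equivalence. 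This is where strong self-absorption enters: the second-leg inclusion $B \to B \otimes B$ is homotopic through $*$-homomorphisms to an isomorphism — the stabilized form of the homotopy equivalence of $\psi$, $l$ and $r$ recorded before Definition~\ref{def:KUA_spectrum} — and I would invoke the resulting homotopy invariance of the automorphism groups from \cite{paper:DadarlatP1} rather than reprove it.

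For compatible inverses I would first unwind Definition~\ref{def:EHI_group}. Using the formula for $\alpha_*$ one computes $(\iota_m \sqcup \id{\mathbf{m}})_*(g) = \id{(A \otimes \K)^{\otimes m}} \otimes g$ and $(\id{\mathbf{m}} \sqcup \iota_m)_*(g) = g \otimes \id{(A \otimes \K)^{\otimes m}}$, so the two elements differ by conjugation by the block flip $\tau_{m,m}$ of $(A \otimes \K)^{\otimes 2m}$:
\[
	g \otimes \id{(A \otimes \K)^{\otimes m}} = \tau_{m,m} \circ \bigl(\id{(A \otimes \K)^{\otimes m}} \otimes g\bigr) \circ \tau_{m,m}^{-1} .
\]
Hence it suffices to produce a path $w_t$ in $G_A(\mathbf{2m})$ from the identity to $\tau_{m,m}$; conjugating $\id{(A \otimes \K)^{\otimes m}} \otimes g$ along $w_t$ then gives the required path. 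Identifying $(A \otimes \K)^{\otimes m} \cong B$, the flip $\tau_{m,m}$ becomes the flip of $B \otimes B$, which is null-homotopic in $\Aut{B \otimes B}$: the flip on $A \otimes A$ is approximately inner (a hallmark of strong self-absorption, cf.\ \cite{paper:TomsWinter} and the homotopy of $l$ and $r$), the flip on $\K \otimes \K$ is inner and hence in the identity component since $U(\mathcal{B}(H))$ is connected, and $\K$-stabilization upgrades approximate innerness to an honest homotopy, as in \cite{paper:DadarlatP1}.

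The categorical verifications above are routine; the genuine content — and the main obstacle — is the homotopy theory of the stabilized algebra $B$: that tensoring with $\id{B}$ is a homotopy equivalence of automorphism groups and that the flip of $B \otimes B$ is null-homotopic. Both rest on the strong self-absorption of $A$ via the homotopy equivalence of the leg maps with the isomorphism $\psi$, and establishing them from first principles would require the full apparatus of approximate unitary equivalence together with the connectivity of unitary groups in stable algebras; I would therefore cite \cite{paper:DadarlatP1} (building on \cite{paper:TomsWinter, paper:DadarlatKK1, paper:WinterZStable}) for these steps.
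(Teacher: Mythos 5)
Your proposal follows essentially the same route as the paper's proof in all three parts: the EH-structure via the interchange law $(g \otimes h) \cdot (g' \otimes h') = (g \cdot g') \otimes (h \cdot h')$, stability by reducing through permutation homeomorphisms to the single leg inclusion $\Aut{A \otimes \K} \to \Aut{(A \otimes \K)^{\otimes 2}}$ and invoking the homotopy between that inclusion and an isomorphism from \cite[Thm.~2.5]{paper:DadarlatP1} (the paper writes out the resulting conjugation homotopy explicitly, with care at $t=0$), and compatible inverses by conjugating $\id{} \otimes g$ along a path from the identity to the block flip, the flip being null-homotopic because the flips on $A \otimes A$ and on $\K \otimes \K$ are (the paper cites contractibility of $\Aut{A \otimes A}$ and path-connectedness of $PU(H)$ where you cite approximate innerness plus stabilization, a cosmetic difference). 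The only point the paper verifies that you omit is well-pointedness of the groups $G_A(\mathbf{n})$, which Definition~\ref{def:EHI_group} requires and which the paper settles by citing \cite[Prop.~2.6]{paper:DadarlatP1}.
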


\begin{proof} \label{pf:A_is_EHI}
The commutativity of the Eckmann-Hilton diagram in Definition \ref{def:EHI_group} is a consequence of $(g \otimes h) \cdot (g' \otimes h') = (g \cdot g') \otimes (h \cdot h')$ for $g,g' \in G_A(\mathbf{m})$ and $h,h' \in G_A(\mathbf{n})$. Well-pointedness of $G_A(\mathbf{n})$ was proven in Corollary 
\cite[Prop.2.6]{paper:DadarlatP1}. To see that non-initial morphisms are mapped to homotopy equivalences, first observe that $G_A$ maps permutations to homeomorphisms, therefore it suffices to check that 
\[
	\Aut{A \otimes \K} \to \Aut{(A \otimes \K)^{\otimes n}} \quad ; \quad g \mapsto g \otimes \id{(A \otimes \K)^{\otimes (n-1)}}
\]
is a homotopy equivalence. But the target space is homeomorphic to $\Aut{A \otimes \K}$ via conjugation with a suitable isomorphism. Altogether it is enough to check that we can find an isomorphism $\psi \colon A \otimes \K \to (A \otimes \K)^{\otimes 2}$ such that 
\[
	\Aut{A \otimes \K} \to \Aut{A \otimes \K} \quad ; \quad g \mapsto \psi^{-1} \circ (g \otimes \id{A \otimes \K}) \circ \psi
\]
is homotopic to $\id{\Aut{A \otimes \K}}$. Let $\hat{\psi} \colon I \times (A \otimes \K) \to (A \otimes \K)^{\otimes 2}$ be the homotopy of \cite[Thm.2.5]{paper:DadarlatP1} and let $\psi = \hat{\psi}(\frac{1}{2})$. Then
\[
	H \colon I \times \Aut{A \otimes \K} \to \Aut{A \otimes \K} \  ; \  
	g \mapsto 
	\begin{cases}
		\hat{\psi}(\frac{t}{2})^{-1} \circ (g \otimes \id{A \otimes \K}) \circ \hat{\psi}(\frac{t}{2}) & \text{for } 0 < t \leq 1 \\
		g & \text{for } t = 0
	\end{cases}
\] 
satisfies the conditions. 

Let $g \in G_A(\mathbf{m})$. To see that there is a path connecting $(\iota_m \sqcup \id{\mathbf{m}})_*(g) = \id{(A \otimes \K)^{\otimes m}} \otimes g$ to $g \otimes \id{(A \otimes \K)^{\otimes m}}$, observe that $\tau_A \colon A \otimes A \to A \otimes A$ with $\tau_A(a \otimes b) = b \otimes a$ is homotopic to the identity by the contractibility of $\Aut{A \otimes A} \cong \Aut{A}$ \cite[Thm.2.3]{paper:DadarlatP1}. Similarly, there is a homotopy between $\tau_{\K} \colon \K \otimes \K \to \K \otimes \K$ with $\tau_{\K}(S \otimes T) = T \otimes S$ and the identity since $\Aut{\K \otimes \K} \cong PU(H)$ is path-connected. 

Thus, we obtain a homotopy between the identity on $(A \otimes \K)^{\otimes m} \otimes (A \otimes \K)^{\otimes m}$ and the corresponding switch automorphism $\tau_{m} \colon (A \otimes \K)^{\otimes m} \otimes (A \otimes \K)^{\otimes m} \to (A \otimes \K)^{\otimes m} \otimes (A \otimes \K)^{\otimes m}$. But, $\tau_m \circ (\id{(A \otimes \K)^{\otimes m}} \otimes g) \circ \tau_m^{-1} = g \otimes \id{(A \otimes \K)^{\otimes m}}$.
\end{proof}

Recall that $KU^A_n = \hom_{\rm gr}(\widehat{S}, (\C\ell_1 \otimes A \otimes \K)^{\widehat{\otimes} n})$. There is a unique isomorphism, which preserves the order of factors of the same type:
\[
	\theta_n \colon (\C\ell_1 \otimes A \otimes \K)^{\widehat{\otimes} n} \to (\C\ell_1)^{\widehat{\otimes} n} \otimes (A \otimes \K)^{\otimes n}\ ,
\]
where -- as above -- the grading on both sides arises from the grading of $\C\ell_1$ and $A \otimes \K$ is trivially graded. In particular, if $\sigma \in \Sigma_n$ is a permutation and $\sigma_* \colon (\C\ell_1 \otimes A \otimes \K)^{\widehat{\otimes} n}\to (\C\ell_1 \otimes A \otimes \K)^{\widehat{\otimes} n}$ is the operation permuting the factors of the graded tensor product, then 
\begin{equation} \label{eqn:theta_n}
	\theta_n \circ \sigma_* = (\sigma^{\C\ell_1}_* \otimes \sigma^{A \otimes \K}_*) \circ \theta_n
\end{equation}
where $\sigma^{\C\ell_1}_* \colon (\C\ell_1)^{\widehat{\otimes} n} \to (\C\ell_1)^{\widehat{\otimes} n}$ and $\sigma^{A \otimes \K}_* \colon (A \otimes \K)^{\otimes n} \to (A \otimes \K)^{\otimes n}$ are the corresponding permutations. Thus, we can define an action of $G_A(\mathbf{n})$ on $KU^A_n$ via 
\[
	\kappa_n \colon G_A(\mathbf{n}) \times KU^A_n \to KU^A_n \quad ; \quad (g, \varphi) \mapsto \theta_n^{-1} \circ (\id{\C\ell_1^{\widehat{\otimes} n}} \otimes g) \circ \theta_n \circ \varphi 
\]

\begin{theorem}\label{thm:HigherTwists}
Let $A$ be a separable, unital, strongly self-absorbing $C^*$-algebra. Then the EH-$\I$-group $G_A$ associated to $A$ acts on the commutative symmetric ring spectrum $KU^A_{\bullet}$ via $\kappa_{\bullet}$ as defined above. We obtain a map of very special $\Gamma$-spaces $\Gamma(G_A) \to \Gamma(\units{KU^{A}})$, which induces an isomorphism on all homotopy groups $\pi_n$ of the associated spectra with $n > 0$ and the inclusion $K_0(A)^{\times}_+ \to K_0(A)^{\times}$ on $\pi_0$. If $A$ is purely infinite \cite[Def.4.1.2]{book:RoerdamStoermer}, this is an equivalence in the stable homotopy category. 
\end{theorem}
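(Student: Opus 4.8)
The plan is to first check that $\kappa_\bullet$ is an action in the sense of Definition \ref{def:EHI_action}, then feed this into Theorem \ref{thm:EHI_action} to obtain the comparison map of $\Gamma$-spaces, and finally compute its effect on homotopy groups; the purely infinite case will then follow as the special situation in which the $\pi_0$-statement degenerates to an isomorphism.

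\emph{Verifying the action.} Conditions (i)--(iii) of Definition \ref{def:EHI_action} are to be checked directly from the formula $\kappa_n(g,\varphi)=\theta_n^{-1}\circ(\id{\C\ell_1^{\widehat\otimes n}}\otimes g)\circ\theta_n\circ\varphi$. Basepoint preservation is immediate since $g$ acts by post-composition and fixes the zero homomorphism, and $\Sigma_n$-equivariance is exactly the intertwining relation $(\ref{eqn:theta_n})$, because $g$ sits in the $(A\otimes\K)^{\otimes n}$-slot on which $\sigma$ acts through $\sigma^{A\otimes\K}_*$. For the multiplicativity square (ii) I would unwind $\mu^R_{m,n}(\varphi,\psi)=(\varphi\,\widehat\otimes\,\psi)\circ\Delta$ and use that $\theta_{m+n}$ agrees, up to the order-preserving reshuffling of $\C\ell_1$- and $(A\otimes\K)$-factors, with $\theta_m\,\widehat\otimes\,\theta_n$; the slotwise action of $g\otimes h$ then matches $\mu^G_{m,n}(g,h)=g\otimes h$, keeping careful track of the Koszul signs from the graded tensor product. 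Condition (iii) reduces to the defining formula $\alpha_*(g)=\bar\alpha\circ(\id{}\otimes g)\circ\bar\alpha^{-1}$ for the $\I$-functoriality of $G_A$ together with the fact that $\eta_l$ carries no $G_A$-action, so that $\iota_{m,n*}(g)=\id{(A\otimes\K)^{\otimes l}}\otimes g$ commutes with stabilization by $\sigma_{l,m}$.

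\emph{Producing the map.} By Theorem \ref{thm:A_is_EHI}, $G_A$ is a stable EH-$\I$-group with compatible inverses, so Theorem \ref{thm:EHI_action} applies and yields a morphism of commutative $\I$-monoids $G_A\to\units{KU^A}$, hence a map of $\Gamma$-spaces $\Gamma(G_A)\to\Gamma(\units{KU^A})$. Both are very special: $\Gamma(G_A)$ because each $G_A(\mathbf{n})$ is a group, so $G_A$ is grouplike, and $\Gamma(\units{KU^A})$ because $KU^A$ is convergent by Theorem \ref{thm:KUA_spectrum}.

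\emph{Homotopy groups.} Since both $\Gamma$-spaces are very special, on associated spectra the map computes, in degrees $\geq 0$, the map of underlying infinite loop spaces $G_{A,h\I}\to GL_1(KU^A)=(\units{KU^A})_{h\I}$. Stability of $G_A$ gives $G_{A,h\I}\simeq G_A(\mathbf{1})=\Aut{A\otimes\K}$ as in Lemma \ref{lem:Hocolim_is_BG}, while convergence of $KU^A$ gives $GL_1(KU^A)\simeq\Tel(\units{KU^A})$ with $\pi_0=GL_1(K_0(A))=K_0(A)^\times$ and $\pi_k(GL_1(KU^A))=\pi_{k+1}(KU^A_1)=K_k(A)$ for $k\geq 1$ by Theorem \ref{thm:KUA_spectrum}. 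On the source, \cite{paper:DadarlatP1} identifies $\pi_0(\Aut{A\otimes\K})=K_0(A)^\times_+$ and, by evaluating $E_A^0$ on spheres and using that the augmentation ideal $\tilde K_0(C(S^k)\otimes A)$ is square-zero together with Bott periodicity, $\pi_k(\Aut{A\otimes\K})\cong K_k(A)$ for $k\geq 1$. The explicit map $g\mapsto g\cdot\eta_1$ sends a class to the action of $g$ on the fundamental class $\eta_1$: on $\pi_0$ this multiplies $1\in K_0(A)$ by the positive unit $[g]$, i.e.\ it is the inclusion $K_0(A)^\times_+\hookrightarrow K_0(A)^\times$, and on $\pi_k$ with $k\geq 1$ it realizes the identity of $K_k(A)$. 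When $A$ is purely infinite every unit of $K_0(A)$ is positive, so $K_0(A)^\times_+=K_0(A)^\times$ and the map is an isomorphism on $\pi_0$ as well; a map of connective spectra (equivalently of very special $\Gamma$-spaces) that is an isomorphism on all homotopy groups is a stable equivalence, which gives the final assertion.

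\emph{Main obstacle.} The delicate point is the positive-degree computation: one must show that the \emph{specific} map $g\mapsto g\cdot\eta_1$ induces the identity on $K_k(A)$, not merely that source and target have abstractly isomorphic homotopy groups. This amounts to checking that the $K$-theory class produced by a $k$-parameter family of automorphisms acting on $\eta_1$ agrees with the class computed from $\Aut{A\otimes\K}$ in \cite{paper:DadarlatP1}, i.e.\ that the two $K$-theoretic identifications are compatible along the comparison map; this, together with the careful sign bookkeeping in step (ii), is where the real work lies.
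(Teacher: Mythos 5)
Your overall architecture matches the paper's: verify Definition \ref{def:EHI_action} for $\kappa_{\bullet}$, invoke Theorem \ref{thm:EHI_action} (together with Theorem \ref{thm:A_is_EHI}) to produce $\Gamma(G_A) \to \Gamma(\units{KU^A})$, reduce via stability and convergence to the space-level map $\Aut{A \otimes \K} \to GL_1(KU^A)$, and read off the purely infinite case from $K_0(A)^{\times}_+ = K_0(A)^{\times}$. But there is a genuine gap exactly where you say ``the real work lies'': you \emph{assert} that $g \mapsto g \cdot \eta_1$ induces an isomorphism on $\pi_k$ for $k \geq 1$ and the inclusion $K_0(A)^{\times}_+ \to K_0(A)^{\times}$ on $\pi_0$, but you give no argument, and your closing paragraph concedes the point rather than resolving it. This computation is the bulk of the paper's proof. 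There, the map $\Theta \colon \Aut{A \otimes \K} \to \Omega KU^A_1$, $g \mapsto \kappa_1(g, \widehat{\eta}_1)$, is factored as $\Theta = \Psi \circ \Phi$, where $\Phi(g) = g(1 \otimes e)$ lands in the space of projections $\Proj{A \otimes \K}^{\times}$ and is a homotopy equivalence by \cite[Thm.2.16, Thm.2.5]{paper:DadarlatP1}, and $\Psi(p) = \widetilde{\eta}_1 \otimes p$. The effect of $\Psi$ is then analyzed in Trout's model of $K$-theory by graded $*$-homomorphisms out of $\widehat{S}$: on $\pi_0$ it is the composition of the map induced by $p \mapsto \epsilon \cdot \left(\begin{smallmatrix} p & 0 \\ 0 & 0 \end{smallmatrix}\right)$, which is the inclusion $K_0(A)^{\times}_+ \to K_0(A)$ by \cite[Cor.2.17]{paper:DadarlatP1} and the discussion after \cite[Thm.4.7]{paper:Trout}, with multiplication by the Bott element; for $n > 0$ one needs the commutative square identifying $\pi_n(\Proj{A \otimes \K}^{\times}, 1 \otimes e) \to \pi_n(\Omega KU^A_1, \widehat{\eta}_1)$ with an isomorphism $K_n(A) \to K'_n(C_0(\R, \C\ell_1) \otimes A)$, including the basepoint correction (the ``subtraction of $1$''). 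None of this appears in your proposal; without it the claim about what the map does on homotopy groups is unproven, since an abstract isomorphism of the groups involved says nothing about the specific map.

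A secondary flaw: your justification that $\Gamma(G_A)$ is very special --- ``because each $G_A(\mathbf{n})$ is a group, so $G_A$ is grouplike'' --- conflates the two multiplications. Grouplikeness concerns $\pi_0(G_{A,h\I})$ with the monoid structure induced by $\mu$ (the tensor product), not the pointwise group structure $\nu$; that each $G_A(\mathbf{n})$ is a group under $\nu$ does not by itself produce $\mu$-inverses in $\pi_0$. The correct argument, which the paper uses, is Lemma \ref{lem:h_inverse}: compatible inverses (supplied by Theorem \ref{thm:A_is_EHI}) imply that $\mu_{m,m}(g,g^{-1})$ is connected to the identity, so $G_{A,h\I}$ is a grouplike topological monoid. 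This is fixable with tools you already cite, but as written the reasoning is incorrect.
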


\begin{proof}\label{pf:HigherTwists}
It is clear that $\kappa_n$ preserves the basepoint of $KU^A_n$. The $\Sigma_n$-equivariance is a direct consequence of (\ref{eqn:theta_n}). Indeed, we have for a permutation $\sigma \in \Sigma_n$, $g \in G_A(\mathbf{n})$ and $\varphi \in KU^A_n$:
\begin{align*}
 & \kappa_n(\sigma^{A \otimes \K} \circ g \circ (\sigma^{A \otimes \K})^{-1}, \sigma \circ \varphi)	\\
= \ & \theta_n^{-1} \circ (\id{} \otimes (\sigma^{A \otimes \K} \circ g \circ (\sigma^{A \otimes \K})^{-1})) \circ \theta_n \circ \sigma \circ \varphi \\
	= \ & \theta_n^{-1} \circ (\sigma^{\C\ell_1} \otimes \sigma^{A \otimes \K}) \circ (\id{} \otimes g) \circ (\sigma^{\C\ell_1} \otimes \sigma^{A \otimes \K})^{-1} \circ \theta_n \circ \sigma \circ \varphi \\
	= \ & \sigma \circ \theta_n^{-1} \circ (\id{} \otimes g) \circ \theta_n \circ \varphi = \sigma \circ \kappa_n(g, \varphi)\ ,
\end{align*}
where we omitted the stars from the notation. 

Let $\tau \colon \C\ell_1^{\widehat{\otimes} m} \otimes (A \otimes \K)^{\otimes m}\,\widehat{\otimes}\,\C\ell_1^{\widehat{\otimes} n} \otimes (A \otimes \K)^{\otimes n} \to \C\ell_1^{\widehat{\otimes} m+n}\,\widehat{\otimes}\,(A \otimes \K)^{\otimes m+n}$ be the permutation of the two middle factors, then $\tau \circ (\theta_m\,\widehat{\otimes}\,\theta_n) = \theta_{m+n}$. This implies that
\[
	(\kappa_m(g, \varphi) \,\widehat{\otimes}\, \kappa_n(h,\psi)) \circ \Delta = \kappa_{m+n}(g \otimes h, \,(\varphi \,\widehat{\otimes}\,\psi) \circ \Delta)
\]
for $g \in G_A(\mathbf{m}), h \in G_A(\mathbf{n})$, $\varphi \in KU^A_m, \psi \in KU^A_n$, which is the compatibility condition in Definition \ref{def:EHI_action} (\ref{it:mult}). The same argument shows that for $l + m = n$, $g \in G(\mathbf{m})$ and $\varphi \in KU^A_m$
\[
	(\widehat{\eta}_l \,\widehat{\otimes}\, \kappa_m(g, \varphi)) \circ \Delta = \kappa_n(\id{} \otimes g, (\widehat{\eta}_l \,\widehat{\otimes}\,\varphi) \circ \Delta)\ ,
\]
which is the crucial observation to see that diagram (\ref{it:stab}) in Definition~\ref{def:EHI_action} commutes.

Thus, we have proven that $G=G_A$ acts on the spectrum $KU^A$. By Theorem \ref{thm:EHI_action} together with Theorem \ref{thm:EHI_group} we obtain a map of $\Gamma$-spaces
\(
	\Gamma(B_{\nu}G_A) \to B_{\mu}\Gamma(G_A) \to B_{\mu} \Gamma(\units{KU^A})
\)
where the first map is a strict equivalence. We see from Lemma \ref{lem:h_inverse} that $G_{h\I}$ is in fact a grouplike topological monoid, i.e.\ $G_{h\I} \to \Omega B_{\mu} G_{h\I}$ is a homotopy equivalence. Thus, to finish the proof, we only need to check that $G_{h\I} \to GL_1(KU^A) = \units{KU^A}_{h\I}$ has the desired properties. Consider the diagram
\[
	\xymatrix{
		G_{h\I} \ar[r] & GL_1(KU^A) \\
		G(\mathbf{1}) \ar[r] \ar[u]^-{\simeq} & \units{KU^A}(\mathbf{1}) \ar[u]_-{\simeq}
	}
\]
where the vertical maps are given by the inclusions into the zero skeleton of the homotopy colimit. The latter are equivalences by \cite[Lemma 2.1]{paper:Schlichtkrull}. By this lemma, we also see that $\pi_0(GL_1(KU^A)) = GL_1(\pi_0(KU^A)) = GL_1(K_0(A)) = K_0(A)^{\times}$. It remains to be seen that 
\[
	\Theta \colon \Aut{A \otimes \K} \to \Omega KU^A_1 = \hom_{\rm gr}(\widehat{S}, C_0(\R, \C\ell_1) \otimes A \otimes \K) \quad ; \quad g \mapsto \kappa_1(g, \widehat{\eta}_1) 
\]
induces an isomorphism on all homotopy groups $\pi_n$ with $n > 0$ and the inclusion $K_0(A)^{\times}_+ \to K_0(A)$ on $\pi_0$. The basepoint of the target space is now given by $\widehat{\eta}_1$ instead of the zero homomorphism. $\Theta$ fits into a commutative diagram
\[
	\xymatrix{
	\Aut{A \otimes \K} \ar[rr]^-{\Theta} \ar[dr]_-{\Phi} & & \hom_{\rm gr}(\widehat{S}, C_0(\R, \C\ell_1) \otimes A \otimes \K) \\
	& \Proj{A \otimes \K}^{\times} \ar[ur]_-{\Psi}
	}
\]
with $\Phi(g) = g(1 \otimes e)$ and $\Psi(p) = \widetilde{\eta}_1 \otimes p$, where $\widetilde{\eta}_1 \in \hom_{\rm gr}(\widehat{S}, C_0(\R, \C\ell_1))$ arises from the functional calculus of the operator described after Definition \ref{def:KUA_spectrum}. It was shown in \cite[Thm.2.16, Thm.2.5]{paper:DadarlatP1} that $\Phi$ is a homotopy equivalence. Let $\epsilon \in \hom_{\rm gr}(\widehat{S}, \C)$ be the counit of $\widehat{S}$. The map $\Psi$ factors as
\[
	\Psi \colon \Proj{A \otimes \K}^{\times} \to \hom_{\rm gr}(\widehat{S}, A\,\widehat{\otimes}\,\widehat{\K}) \to \hom_{\rm gr}(\widehat{S}, C_0(\R,\C\ell_1) \otimes A \otimes \K)
\]
where the first map sends a projection $p$ to $\epsilon \cdot \left(\begin{smallmatrix} p & 0 \\ 0 & 0 \end{smallmatrix}\right)$ and the second sends $\varphi$ to $\varphi \,\widehat{\otimes}\, \widetilde{\eta}_1 \circ \Delta$ and applies the graded isomorphism to shift the grading to $C_0(\R, \C\ell_1)$ only. Since the second map induces multiplication with the Bott element, it is an isomorphism on $\pi_0$. It was proven in \cite[Cor.2.17]{paper:DadarlatP1} that $\pi_0(\Proj{A \otimes \K}^{\times}) \cong K_0(A)^{\times}_+$. The discussion after the proof of \cite[Thm.4.7]{paper:Trout} shows that the first map induces the inclusion $K_0(A)^{\times}_+ \to K_0(A)$ on $\pi_0$.

Let $B$ be a graded, $\sigma$-unital $C^*$-algebra and define $K'_n(B)$ to be the kernel of the map $K'(C(S^n) \otimes B) \to K'(B)$ induced by evaluation at the basepoint. Here, we used the notation $K'(B) = \pi_0(\hom_{\rm gr}(\widehat{S}, B \,\widehat{\otimes}\,\widehat{K}))$ introduced in \cite{paper:Trout}. The five lemma shows that $K'_n(B)$ is in fact isomorphic to $K_n(B)$, if we identify the latter with the kernel $K_0(C(S^n) \otimes B) \to K_0(B)$. For $n > 0$ we have the commutative diagram
\[
	\xymatrix{
	\pi_n(\Proj{A \otimes \K}^{\times}, 1 \otimes e) \ar[r]^-{\Psi_*} \ar[d]_-{\cong} & \pi_n(\Omega KU^A_1, \widehat{\eta}_1) \ar[d]^-{\cong} \\
	K_n(A) \ar[r]_-{\cong} & K_n'(C_0(\R, \C\ell_1) \otimes A)
	}
\]
where the lower horizontal map sends $[p]-[q] \in K_n(A) = \ker(K_0(C(S^n) \otimes A) \to K_0(A))$ to $\left[\widetilde{\eta}_1\,\widehat{\otimes}\,\left(\begin{smallmatrix} p & 0 \\ 0 & q \end{smallmatrix}\right)\right] \in K_n'(C_0(\R, \C\ell_1) \otimes A)$. The same argument as for $\pi_0$ above shows that this is an isomorphism. Every element $\gamma \colon (S^n,x_0) \to (\Proj{A \otimes \K}, 1\otimes e)$ induces a projection $p_{\gamma} \in C(S^n) \otimes A \otimes \K$. The vertical map on the left sends $[\gamma]$ to $[p_{\gamma}] - [1_{C(S^n)} \otimes 1 \otimes e] \in K_0(C(S^n,x_0)\otimes A)\cong K_n(A)$. This map is an isomorphism by Bott periodicity. Finally, we can consider $\gamma' \colon S^n \to \Omega KU^A_1$ as an element $\varphi_{\gamma'} \in \hom_{\rm gr}(\widehat{S}, C(S^n) \otimes C_0(\R,\C\ell_1) \otimes A \otimes \K)$ ($\K$ trivially graded!). The vertical map on the right hand side sends $[\gamma']$ to $\left[\left( \begin{smallmatrix} \varphi_{\gamma'} & 0 \\ 0 & 1_{C(S^n)} \otimes \,\widehat{\eta}_1 \end{smallmatrix} \right)\right] \in K_n'(C_0(\R,\C\ell_1) \otimes A)$. It corresponds to the `subtraction of $1$`, i.e.\ it corrects the basepoint by shifting back to the component of the zero homomorphism. Its inverse is given by $\psi \mapsto \psi \oplus \left(\begin{smallmatrix} 1_{C(S^n)} \otimes\,\widehat{\eta}_1 & 0 \\ 0 & 0 \end{smallmatrix}\right)$, where $\oplus$ is the addition operation described in \cite{paper:Trout}. 

If $A$ is purely infinite, we have $K_0(A)^{\times}_+ = K_0(A)^{\times}$, which implies the last statement. This finishes the proof.
\end{proof}
	
\subsection{Applications}
The infinite Cuntz algebra $\Cuntz{\infty}$ is the universal unital $C^*$-algebra generated by countably infinitely many generators $s_i$ that satisfy the relations $s_i^*s_j = \delta_{ij}\,1$. It is purely infinite, strongly self-absorbing, satisfies the universal coefficient theorem in KK-theory and for any locally compact Hausdorff space $X$ the unit homomorphism $\C \to \Cuntz{\infty}$ induces a natural isomorphism of multiplicative cohomology theories $K^0(X) = K_0(C(X)) \to K_0(C(X) \otimes \Cuntz{\infty})$.

\begin{theorem} \label{thm:CuntzHigherK}
The very special $\Gamma$-spaces $\Gamma(G_{\Cuntz{\infty}})$ and $\Gamma(\units{KU})$ are strictly equivalent, which implies that the spectrum associated to $\Gamma(G_{\Cuntz{\infty}})$ is equivalent to $gl_1(KU)$ in the stable homotopy category. In particular, $B\Aut{\Cuntz{\infty} \otimes \K}$ is weakly homotopy equivalent to $BBU_{\otimes} \times B(\Z / 2\Z)$. 
\end{theorem}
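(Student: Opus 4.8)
The plan is to feed the purely infinite algebra $A = \Cuntz{\infty}$ into Theorem~\ref{thm:HigherTwists}, to replace $KU^{\Cuntz{\infty}}$ by the standard model $KU = KU^{\C}$ of Theorem~\ref{thm:KUA_spectrum}, and finally to read off the space-level statement from Theorem~\ref{thm:EHI_group}. First I would observe that $\Cuntz{\infty}$ is purely infinite, so the last sentence of Theorem~\ref{thm:HigherTwists} applies and the map $\Gamma(G_{\Cuntz{\infty}}) \to \Gamma(\units{KU^{\Cuntz{\infty}}})$ induces an isomorphism on all homotopy groups of the associated spectra, including $\pi_0$ (indeed $K_0(\Cuntz{\infty})^{\times}_+ = K_0(\Cuntz{\infty})^{\times} = \{\pm 1\}$). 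Because both $\Gamma$-spaces are very special, an isomorphism on all stable homotopy groups is already a strict equivalence: the Segal maps give $X(S) \simeq X(S^0)^{\lvert \bar S\rvert}$ naturally, so it is enough that $X(S^0) \to Y(S^0)$ be a weak equivalence of grouplike spaces, which the $\pi_*$-isomorphism guarantees.

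Next I would compare $KU^{\Cuntz{\infty}}$ with $KU = KU^{\C}$. The unital inclusion $\C \hookrightarrow \Cuntz{\infty}$ induces, by the functoriality of Definition~\ref{def:KUA_spectrum} in the coefficient algebra, a map of commutative symmetric ring spectra $KU^{\C} \to KU^{\Cuntz{\infty}}$; on the $n$-th space it is induced by $\C\ell_1 \otimes \C \otimes \K \hookrightarrow \C\ell_1 \otimes \Cuntz{\infty} \otimes \K$. The homotopy-group computation in the proof of Theorem~\ref{thm:KUA_spectrum} identifies its effect on $\pi_k(KU^{\bullet}_n)$ with the unit-induced map $K_{k-n}(\C) \to K_{k-n}(\Cuntz{\infty})$, which is an isomorphism for all $k,n$ by the natural isomorphism $K_0(C(X)) \cong K_0(C(X) \otimes \Cuntz{\infty})$ recorded above (both $K_1$ groups vanish). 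Hence $KU^{\C}_n \to KU^{\Cuntz{\infty}}_n$ is a weak equivalence for each $n \geq 1$; since the construction passes through $\Omega^n(-)$ and restricts to the invertible components compatibly with $\mu$, it yields a levelwise weak equivalence $\units{KU^{\C}} \to \units{KU^{\Cuntz{\infty}}}$ and thus a strict equivalence $\Gamma(\units{KU}) \to \Gamma(\units{KU^{\Cuntz{\infty}}})$. Composing with the map of the previous paragraph gives a zig-zag of strict equivalences $\Gamma(G_{\Cuntz{\infty}}) \simeq \Gamma(\units{KU})$; by the Bousfield--Friedlander theorem the associated spectra are then stably equivalent, and since the spectrum of $\Gamma(\units{KU})$ is $gl_1(KU)$ by definition, the first two assertions follow.

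For the concluding statement, recall that $G_{\Cuntz{\infty}}$ is a stable EH-$\I$-group (Theorem~\ref{thm:A_is_EHI}), so Theorem~\ref{thm:EHI_group} supplies a strict equivalence $B_{\mu}\Gamma(G_{\Cuntz{\infty}}) \simeq \Gamma(B_{\nu}G_{\Cuntz{\infty}})$ and, via Lemma~\ref{lem:Hocolim_is_BG} and Lemma~\ref{lem:commutes}, an equivalence $B\Aut{\Cuntz{\infty} \otimes \K} = B_{\nu}G_{\Cuntz{\infty}}(\mathbf{1}) \simeq (B_{\nu}G_{\Cuntz{\infty}})_{h\I}$. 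Since $\Gamma(B_{\nu}G_{\Cuntz{\infty}})$ is a one-fold delooping of $\Gamma(G_{\Cuntz{\infty}})$ and $B_{\nu}G_{\Cuntz{\infty}}$ is connected (hence its $0$-th space is already grouplike), $B\Aut{\Cuntz{\infty} \otimes \K}$ is the $0$-th space of the delooping of $gl_1(KU)$, i.e.\ $B\Aut{\Cuntz{\infty} \otimes \K} \simeq BGL_1(KU)$. It then remains to invoke the classical description of the units of $K$-theory: the identity component of $GL_1(KU)$ is Segal's infinite loop space $BU_{\otimes} = \{1\} \times BU$ \cite{paper:SegalCatAndCoh}, and the subgroup $\pi_0 GL_1(KU) = GL_1(\Z) = \Z/2\Z$ splits off as an infinite loop space, so $GL_1(KU) \simeq \Z/2\Z \times BU_{\otimes}$; applying the classifying space functor gives $BGL_1(KU) \simeq B(\Z/2\Z) \times BBU_{\otimes}$.

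The step I expect to require the most care is the comparison $KU^{\Cuntz{\infty}} \simeq KU$ and its transfer to units: one must check that the unit-induced map is a levelwise weak equivalence of these positive $\Omega$-spectra and that it descends to the unit $\I$-monoids, which is exactly where the defining $K$-theoretic property of $\Cuntz{\infty}$ enters. The final splitting $GL_1(KU) \simeq \Z/2\Z \times BU_{\otimes}$ is classical but deserves an explicit reference, since it rests on the infinite-loop compatibility of the order-two unit $-1$ (equivalently, the vanishing of the relevant $k$-invariant).
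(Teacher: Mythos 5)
Your proposal is correct and follows essentially the same route as the paper: the zig-zag $\Gamma(G_{\Cuntz{\infty}}) \to \Gamma(\units{KU^{\Cuntz{\infty}}}) \leftarrow \Gamma(\units{KU})$, with the left map a strict equivalence via the purely infinite case of Theorem~\ref{thm:HigherTwists} and the right map induced by the unit $\C \to \Cuntz{\infty}$, followed by Theorem~\ref{thm:EHI_group} and Lemma~\ref{lem:Hocolim_is_BG} for the space-level statement. The only (inessential) difference is in verifying the right-hand equivalence: the paper tests $GL_1(KU) \to GL_1(KU^{\Cuntz{\infty}})$ against finite CW-complexes using the ring isomorphism $K^0(X) \cong K_0(C(X) \otimes \Cuntz{\infty})$, while you argue levelwise on the spectra via the $\pi_*$-computation of Theorem~\ref{thm:KUA_spectrum}; both reduce to the same fact that the unit map is a $K_*$-isomorphism.
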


\begin{proof}	
The unit homomorphism $\C \to \Cuntz{\infty}$ yields $\Gamma(\units{KU^{\C}}) \to \Gamma(\units{KU^{\Cuntz{\infty}}})$. To see that this is a strict equivalence, it suffices to check that $GL_1(KU) \to GL_1(KU^{\Cuntz{\infty}})$ is a weak equivalence. Let $X$ be a finite CW-complex. The isomorphism $[X,\Omega^1(KU_1^{\Cuntz{\infty}})] \cong K_0(C(X) \otimes \Cuntz{\infty})$ constructed above restricts to $[X,GL_1(KU^{\Cuntz{\infty}})] \cong GL_1(K_0(C(X) \otimes \Cuntz{\infty}))$ and similarly with $\C$ instead of $\Cuntz{\infty}$. The composition
\[
	GL_1(K^0(X)) \cong [X, GL_1(KU)] \to [X, GL_1(KU^{\Cuntz{\infty}})] \cong GL_1(K_0(C(X) \otimes \Cuntz{\infty})) 
\]
is the restriction of the ring isomorphism $K^0(X) \to K_0(C(X) \otimes \Cuntz{\infty})$ to the invertible elements. 

By Theorem \ref{thm:HigherTwists} we obtain a strict equivalence $\Gamma(G_{\Cuntz{\infty}}) \to \Gamma(\units{KU^{\Cuntz{\infty}}})$. Therefore the zig-zag $\Gamma(G_{\Cuntz{\infty}}) \to \Gamma(\units{KU^{\Cuntz{\infty}}}) \leftarrow \Gamma(\units{KU})$ proves the first claim. From this, we get a weak equivalence between $BGL_1(KU) \simeq BBU_{\otimes} \times B(\Z/2\Z)$ and $B_{\nu}G_A(\mathbf{1}) = B\Aut{\Cuntz{\infty} \otimes \K}$ using Lemma \ref{lem:Hocolim_is_BG} and the stability of $G_A$.
\end{proof}

The UHF-algebra $M_{p^{\infty}}$ is constructed as an infinite tensor product of matrix algebras $M_{p}(\C)$. It is separable, unital, strongly self-absorbing with $K_0(M_{p^{\infty}}) \cong \Z[\tfrac{1}{p}]$, $K_1(M_{p^{\infty}}) = 0$. Likewise, if we fix a prime $p$ and choose a sequence $(d_j)_{j \in \N}$ such that each prime number except $p$ appears in the sequence infinitely many times, we can recursively define $A_{n+1} = A_n \otimes M_{d_n}$, $A_0 = \C$. The direct limit $M_{(p)} = \lim A_n$ is a separable, unital, strongly self-absorbing $C^*$-algebra with $K_0(M_{(p)}) \cong \Z_{(p)}$ -- the localization of the integers at $p$ -- and $K_1(M_{(p)}) = 0$.

\begin{theorem} \label{thm:localization}
The very special $\Gamma$-spaces $\Gamma(G_{M_{(p)} \otimes \Cuntz{\infty}})$ and $\Gamma(\units{KU_{(p)}})$ are strictly equivalent, which implies that the spectrum associated to $\Gamma(G_{M_{(p)} \otimes \Cuntz{\infty}})$ is equivalent to $gl_1(KU_{(p)})$ in the stable homotopy category. In particular, $B\Aut{M_{(p)} \otimes \Cuntz{\infty} \otimes \K}$ is weakly homotopy equivalent to $GL_1(KU_{(p)})$. The analogous statement for the localization \emph{away} from $p$ is also true if $M_{(p)}$ is replaced by $M_{p^{\infty}}$.
\end{theorem}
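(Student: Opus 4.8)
The plan is to mirror the proof of Theorem~\ref{thm:CuntzHigherK}, the single genuinely new ingredient being the identification of the concrete spectrum $KU^{M_{(p)} \otimes \Cuntz{\infty}}$ with the localization $KU_{(p)}$. Write $A = M_{(p)} \otimes \Cuntz{\infty}$. First I would observe that $A$ is again separable, unital and strongly self-absorbing, being a tensor product of two such algebras, and that it is purely infinite: it absorbs $\Cuntz{\infty}$ and is simple and nuclear, so the relevant pure infiniteness criterion applies. Hence Theorem~\ref{thm:HigherTwists} applies in the purely infinite case and yields a strict equivalence of very special $\Gamma$-spaces $\Gamma(G_A) \to \Gamma(\units{KU^A})$.

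It then remains to identify $\Gamma(\units{KU^A})$ with $\Gamma(\units{KU_{(p)}})$. By Theorem~\ref{thm:KUA_spectrum} we have $\pi_n(KU^A) = K_n(A)$, and since tensoring with $\Cuntz{\infty}$ leaves $K$-theory unchanged, $K_0(A) \cong K_0(M_{(p)}) \cong \Z_{(p)}$ and $K_1(A) = 0$; thus $KU^A$ has the homotopy groups of $KU_{(p)}$. The unital inclusion $\C \to A$ induces a morphism of commutative symmetric ring spectra $KU = KU^{\C} \to KU^A$, and on $\pi_0$ it is the unique ring homomorphism $\Z \to \Z_{(p)}$, i.e.\ the localization map inverting every prime $q \neq p$; by the ring structure (Bott periodicity) the same holds in every even degree.

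To conclude I would invoke the universal property of localization of commutative ring spectra: writing $S \subset \pi_0(KU) = \Z$ for the multiplicative set generated by the primes $\neq p$, the localization $KU \to KU[S^{-1}] = KU_{(p)}$ is initial among ring maps out of $KU$ that invert $S$. Since $KU \to KU^A$ inverts $S$, it factors (in the homotopy category of commutative symmetric ring spectra, possibly through a zig-zag of stable equivalences) as $KU \to KU_{(p)} \xrightarrow{\phi} KU^A$ with $\phi$ inducing the identity $\Z_{(p)} \to \Z_{(p)}$ on all homotopy groups, hence a stable equivalence. As $gl_1$ sends a stable equivalence of positive fibrant commutative symmetric ring spectra to a stable equivalence of unit spectra -- a $\pi_*$-isomorphism of ring spectra only replaces $\pi_0$ by its group of units and leaves $\pi_n$, $n>0$, unchanged -- $\phi$ induces a strict equivalence $\Gamma(\units{KU_{(p)}}) \to \Gamma(\units{KU^A})$. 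Composing gives the zig-zag
\[
	\Gamma(G_A) \xrightarrow{\ \simeq\ } \Gamma(\units{KU^A}) \xleftarrow{\ \simeq\ } \Gamma(\units{KU_{(p)}}),
\]
which proves the first assertion; evaluating at $S^0$ and using Lemma~\ref{lem:Hocolim_is_BG} together with the stability of $G_A$ yields $B\Aut{M_{(p)} \otimes \Cuntz{\infty} \otimes \K} \simeq GL_1(KU_{(p)})$, exactly as in Theorem~\ref{thm:CuntzHigherK}.

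The main obstacle is this last step: one must know that the Bousfield $p$-localization $KU_{(p)}$ is modelled by a commutative symmetric ring spectrum receiving a ring map from $KU$ that realizes $p$-localization on homotopy and enjoys the universal property above, so that the operator-algebraic model $KU^A$ is identified with it. I would phrase this through inverting a set of elements of $\pi_0$ of an $E_\infty$-ring. The away-from-$p$ statement is proved identically with $M_{(p)}$ replaced by $M_{p^{\infty}}$: here $K_0(M_{p^{\infty}}) \cong \Z[\tfrac{1}{p}]$, so $\C \to M_{p^{\infty}}$ inverts $p$ and $KU^{M_{p^{\infty}} \otimes \Cuntz{\infty}} \simeq KU[\tfrac{1}{p}]$.
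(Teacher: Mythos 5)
Your proposal is correct, and its overall skeleton is the same as the paper's: apply Theorem \ref{thm:HigherTwists} to $A = M_{(p)}\otimes\Cuntz{\infty}$ (using that $A$ is purely infinite, so $K_0(A)^{\times}_+ = K_0(A)^{\times}$ and the equivalence is strict), then identify $gl_1(KU^A)$ with $gl_1(KU_{(p)})$ via the unit map $KU \to KU^A$. The difference is in how that identification is carried out. The paper does it by hand: it takes a commutative symmetric ring spectrum model $S_{(p)}$ of the Moore spectrum, sets $KU_{(p)} = KU \wedge S_{(p)}$, and smashes the unit map with $S_{(p)}$ to get an explicit zig-zag of point-set maps of commutative symmetric ring spectra $KU_{(p)} \to KU^A \wedge S_{(p)} \leftarrow KU^A$, both $\pi_*$-isomorphisms because $\pi_*(KU^A)$ is already $\Z_{(p)}$-local; the argument of Theorem \ref{thm:CuntzHigherK} then upgrades these to strict equivalences of the unit $\Gamma$-spaces. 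You instead invoke the universal property of localization of commutative ring spectra to produce a map $\phi \colon KU_{(p)} \to KU^A$ in the homotopy category and observe it is a $\pi_*$-isomorphism. This is legitimate, but it is exactly the step you flag as ``the main obstacle,'' and it costs two things the paper's route gets for free: first, you must import the theory of inverting elements of $\pi_0$ for structured ring spectra, whereas the paper's wrong-way map $KU^A \to KU^A \wedge S_{(p)}$ is precisely a hands-on proof of the single instance of that universal property that is needed; second, since the construction $R \mapsto \Gamma(\units{R})$ is defined on actual symmetric ring spectra, your homotopy-category map $\phi$ must still be rectified to a zig-zag of point-set ring maps, each a $\pi_*$-isomorphism, before the levelwise argument applies --- your parenthetical ``possibly through a zig-zag of stable equivalences'' is where the paper's explicit construction already lives. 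What your route buys in exchange is generality and conceptual clarity: it would work verbatim for inverting an arbitrary set of elements of $\pi_0(KU)$, not only for arithmetic localizations where a commutative ring spectrum model of the Moore spectrum happens to exist.
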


\begin{proof}
Consider the commutative symmetric ring spectrum $KU^{M_{(p)} \otimes \Cuntz{\infty}}_{\bullet}$ with homotopy groups $\pi_{2k}(KU^{M_{(p)} \otimes \Cuntz{\infty}}) = \Z_{(p)}$, $\pi_{2k+1}(KU^{M_{(p)} \otimes \Cuntz{\infty}}) = 0$. Note that the unit homomorphism $\C \to M_{(p)} \otimes \Cuntz{\infty}$ induces a map of spectra $KU^{\C} \to KU^{M_{(p)}\otimes \Cuntz{\infty}}$, which is the localization map $\Z \to \Z_{(p)}$ on the non-zero coefficient groups. Let $S_{(p)}$ be the Moore spectrum, i.e.\ the commutative symmetric ring spectrum with $\widetilde{H}^1(S_{(p)}; \Z) \cong \Z_{(p)}$ and $\widetilde{H}^k(S_{(p)};\Z) = 0$ for $k \neq 1$. We have $KU_{(p)} = KU \wedge S_{(p)}$ and $\pi_k(KU) \to \pi_k(KU_{(p)}) = \pi_k(KU) \otimes \Z_{(p)}$ is the localization map. It follows that $KU_{(p)} \to KU^{M_{(p)} \otimes \Cuntz{\infty}} \wedge S_{(p)} \leftarrow KU^{M_{(p)}\otimes \Cuntz{\infty}}$ is a zig-zag of $\pi_*$-equivalences. Just as in Theorem \ref{thm:CuntzHigherK} we show that it induces a zig-zag of strict equivalences $\Gamma(\units{KU_{(p)}}) \to \Gamma(\units{KU^{M_{(p)} \otimes \Cuntz{\infty}} \wedge S_{(p)}}) \leftarrow \Gamma(\units{KU^{M_{(p)} \otimes \Cuntz{\infty}}})$, which shows that $gl_1(KU_{(p)})$ is stably equivalent to $gl_1(KU^{M_{(p)} \otimes \Cuntz{\infty}})$. But by Theorem \ref{thm:HigherTwists} we have a strict equivalence $\Gamma(G_{M_{(p)}\otimes \Cuntz{\infty}}) \to \Gamma(\units{KU^{M_{(p)}\otimes \Cuntz{\infty}}})$. The proof for the localization away from $p$ is completely analogous therefore we omit it.
\end{proof}

In \cite{paper:DadarlatP1} the authors used a permutative category $\mathcal{B}_{\otimes}$ to show that $B\Aut{A \otimes \K}$ carries an infinite loop space structure: The objects of $\mathcal{B}_{\otimes}$ are the natural numbers $\N_0 = \{0, 1, 2, \dots\}$ and $\hom(m,n) = \{\alpha \in \hom((A \otimes \K)^{\otimes m}, (A \otimes \K)^{\otimes n})\ |\ \alpha((1 \otimes e)^{\otimes m}) \in GL_1(K_0((A \otimes \K)^{\otimes n}))\}$, where $e\in \K$ is a rank 1-projection and $(1 \otimes e)^{\otimes 0} = 1 \in \C$. Since $\hom(0,1)$ is non-empty, there is a stabilization $\theta$ of the object $1 \in \obj(\mathcal{B}_{\otimes})$ by Lemma \ref{lem:bij_stabi}. In fact, we may choose $\theta(\iota_{\mathbf{1}}) \in \hom(\C, A \otimes \K)$ to be $\theta(\lambda) = \lambda(1 \otimes e)$.

\begin{theorem}
Let $A$ be a separable, strongly self-absorbing $C^*$-algebra. Then there is a strict equivalence of $\Gamma$-spaces $\Gamma(B_{\nu}G_A) \to \Gamma(\mathcal{B}_{\otimes})$. In particular, the induced infinite loop space structures on $B\Aut{A \otimes \K}$ agree.
\end{theorem}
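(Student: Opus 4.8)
The plan is to obtain the statement as a direct application of Theorem~\ref{thm:Schlichtkrull_vs_Segal} to the permutative category $\mathcal{C} = \mathcal{B}_{\otimes}$ and the object $x = 1 \in \obj(\mathcal{B}_{\otimes})$. Since $x^{\otimes n} = n$ and the objects of $\mathcal{B}_{\otimes}$ are exactly $\N_0$, the full subcategory $\mathcal{C}_x$ is all of $\mathcal{B}_{\otimes}$. I first identify the associated automorphism $\I$-group $A_x$ with $G_A$: by definition $A_x(\mathbf{n}) = {\rm Aut}_{\mathcal{B}_{\otimes}}(n)$, and every $*$-automorphism $\alpha$ of $(A \otimes \K)^{\otimes n}$ is a morphism of $\mathcal{B}_{\otimes}$, because $\alpha$ induces an order automorphism of $K_0((A \otimes \K)^{\otimes n}) \cong K_0(A)$ and therefore sends the class of the order unit $(1 \otimes e)^{\otimes n}$ into $GL_1(K_0(A))$; the same holds for $\alpha^{-1}$, so $\alpha$ is an isomorphism in $\mathcal{B}_{\otimes}$. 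Hence $A_x(\mathbf{n}) = \Aut{(A \otimes \K)^{\otimes n}} = G_A(\mathbf{n})$ as topological groups, with matching functorial and monoidal structures, so $A_x = G_A$. Feeding in the stabilization $\theta$ fixed above (with $\theta(\iota_{\mathbf{1}})(\lambda) = \lambda(1 \otimes e)$), Theorem~\ref{thm:Schlichtkrull_vs_Segal} yields the required map of $\Gamma$-spaces $\Phi_{\theta} \colon \Gamma(B_{\nu} G_A) \to \Gamma(\mathcal{B}_{\otimes})$.

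Next I would verify the hypotheses guaranteeing that $\Phi_{\theta}$ is an equivalence. Convergence is clear since $G_A$ is in fact stable by Theorem~\ref{thm:A_is_EHI}. Stability also collapses the connectivity condition: the stabilization maps $B_{\nu} G_A(\mathbf{m}) \to B_{\nu} G_A(\mathbf{m+1})$ are homotopy equivalences and are compatible, via $\theta$, with the maps into $\lvert N_{\bullet} \mathcal{B}_{\otimes} \rvert$, so the connectivity of $B_{\nu} G_A(\mathbf{m}) \to \lvert N_{\bullet} \mathcal{B}_{\otimes} \rvert$ does not depend on $m$ up to homotopy. Thus an unbounded sequence $\lambda_n$ as demanded by Theorem~\ref{thm:Schlichtkrull_vs_Segal} exists if and only if the single map
\[
	B_{\nu} G_A(\mathbf{1}) = B\Aut{A \otimes \K} \longrightarrow \lvert N_{\bullet} \mathcal{B}_{\otimes} \rvert
\]
is a weak homotopy equivalence, in which case it is $\infty$-connected and any unbounded $\lambda_n$ (say $\lambda_n = n$) works.

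The main obstacle is therefore this weak equivalence, which says that replacing the automorphism group of the single object $1$ by the whole category $\mathcal{B}_{\otimes}$ does not alter the homotopy type of the classifying space. I would argue in two steps. First, since $(A \otimes \K)^{\otimes m} \cong (A \otimes \K)^{\otimes n}$ for all $m,n$, every object of $\mathcal{B}_{\otimes}$ is isomorphic to $1$; as the object set is discrete, the inclusion of the full subcategory on $1$ is a (topological) equivalence of categories, giving $\lvert N_{\bullet} \mathcal{B}_{\otimes} \rvert \simeq B\,{\rm End}_{\mathcal{B}_{\otimes}}(1)$, where ${\rm End}_{\mathcal{B}_{\otimes}}(1)$ is the topological monoid of $*$-endomorphisms $\varphi$ of $A \otimes \K$ with $\varphi(1 \otimes e) \in \Proj{A \otimes \K}^{\times}$. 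Second, I would show that the inclusion $\Aut{A \otimes \K} \hookrightarrow {\rm End}_{\mathcal{B}_{\otimes}}(1)$ is a weak equivalence. This is precisely where strong self-absorption is essential: the evaluation maps $g \mapsto g(1 \otimes e)$ from $\Aut{A \otimes \K}$ and from ${\rm End}_{\mathcal{B}_{\otimes}}(1)$ to $\Proj{A \otimes \K}^{\times}$ are both homotopy equivalences by \cite[Thm.~2.5, Thm.~2.16]{paper:DadarlatP1} (the homotopies $\hat\psi$ there exhibit endomorphisms as homotopic to automorphisms), whence the inclusion is a weak equivalence as well. This operator-algebraic input is the step I expect to require the most care, and it is exactly what makes the categorical and the $\I$-group descriptions match; alternatively one may invoke the identification $B\Aut{A \otimes \K} \simeq \lvert N_{\bullet} \mathcal{B}_{\otimes} \rvert$ already implicit in \cite{paper:DadarlatP1}.

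Finally I would assemble the conclusion. With the connectivity condition verified, Theorem~\ref{thm:Schlichtkrull_vs_Segal} makes $\Phi_{\theta}$ an equivalence of $\Gamma$-spaces at every finite pointed set $S$, hence a strict equivalence, which by the Bousfield--Friedlander correspondence induces a stable equivalence of the associated spectra. Evaluating at $S = S^0$ and using Lemma~\ref{lem:Hocolim_is_BG} together with stability of $G_A$ identifies the underlying infinite loop spaces of both $\Gamma$-spaces with $B\Aut{A \otimes \K}$; therefore the infinite loop space structure coming from $\Gamma(B_{\nu} G_A)$ (via Theorem~\ref{thm:EHI_group}) agrees with the one produced by the Segal machine applied to $\mathcal{B}_{\otimes}$ in \cite{paper:DadarlatP1}.
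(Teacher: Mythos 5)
Your overall route coincides with the paper's: identify $A_x = G_A$ for $x = 1 \in {\rm obj}(\mathcal{B}_{\otimes})$, feed the stabilization $\theta$ into Theorem~\ref{thm:Schlichtkrull_vs_Segal}, and use stability of $G_A$ to reduce the connectivity hypothesis to the single map $B\Aut{A \otimes \K} \to \lvert N_{\bullet}\mathcal{B}_{\otimes}\rvert$ being a weak equivalence. The paper simply quotes that equivalence (for all $m>0$) from \cite{paper:DadarlatP1}; you instead try to prove it, and that is where there is a genuine gap. Your first step claims that every object of $\mathcal{B}_{\otimes}$ is isomorphic to $1$, so that the inclusion of the full subcategory on $1$ is an equivalence of topological categories. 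This is false for the monoidal unit: the object $0$ is the algebra $(A \otimes \K)^{\otimes 0} = \C$, and $\hom_{\mathcal{B}_{\otimes}}(1,0) = \emptyset$, since $A \otimes \K$ is simple and infinite-dimensional, so its only $*$-homomorphism to $\C$ is the zero map, whose $K_0$-class $0 \in K_0(\C) = \Z$ does not lie in $GL_1(\Z) = \{\pm 1\}$. Hence $0 \not\cong 1$ in $\mathcal{B}_{\otimes}$, the inclusion is not essentially surjective, and $\lvert N_{\bullet}\mathcal{B}_{\otimes} \rvert \simeq B\,{\rm End}_{\mathcal{B}_{\otimes}}(1)$ cannot be obtained from an equivalence of categories. (Your second step, comparing $\Aut{A \otimes \K}$ with ${\rm End}_{\mathcal{B}_{\otimes}}(1)$, and your reduction of the $\lambda_n$-condition via stability, are fine.)

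The statement you need is nevertheless true, but the unit object has to be handled by a homotopy-theoretic argument rather than a categorical one. For example, applying (a topological version of) Quillen's Theorem~A to the inclusion of the full subcategory $\mathcal{D}$ on the objects $n \geq 1$, the only nontrivial comma category is $0 \downarrow \mathcal{D}$; since $\mathcal{D}$ is equivalent to the one-object category on $1$, its nerve is the two-sided bar construction $B(\ast, {\rm End}_{\mathcal{B}_{\otimes}}(1), \hom_{\mathcal{B}_{\otimes}}(0,1))$, and this is contractible precisely because the orbit map ${\rm End}_{\mathcal{B}_{\otimes}}(1) \to \hom_{\mathcal{B}_{\otimes}}(0,1) \cong \Proj{A \otimes \K}^{\times}$, $\varphi \mapsto \varphi(1 \otimes e)$, is an equivariant homotopy equivalence --- exactly the input from \cite{paper:DadarlatP1} that you already invoke. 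So either supply an argument of this kind, or do what the paper does and cite $B\Aut{(A\otimes\K)^{\otimes m}} \simeq \lvert N_{\bullet}\mathcal{B}_{\otimes}\rvert$, $m>0$, directly from \cite{paper:DadarlatP1}. A smaller point: your justification that every $*$-automorphism $\alpha$ of $(A \otimes \K)^{\otimes n}$ is a morphism of $\mathcal{B}_{\otimes}$ (``an order automorphism sends the order unit into $GL_1$'') is not a proof, since a positivity-preserving group automorphism of $K_0(A)$ need not a priori send the ring unit to a ring unit; the correct reference is the result in \cite{paper:DadarlatP1} that $[\alpha((1 \otimes e)^{\otimes n})] \in K_0(A)^{\times}_+$ for every automorphism $\alpha$.
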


\begin{proof}
Note that ${\rm Aut}_{\mathcal{B}_{\otimes}}(1) = \Aut{A \otimes \K}$ and that the maps $B_{\nu}G_A(\mathbf{m}) = B\Aut{(A \otimes \K)^{\otimes m}} \to \lvert N_{\bullet}\mathcal{B}_{\otimes} \rvert$ are homotopy equivalences for $m > 0$. Thus, the statement follows from Theorem \ref{thm:Schlichtkrull_vs_Segal}.
\end{proof}

The above identification may be used to prove theorems about bundles (and continuous fields) of strongly self-absorbing $C^*$-algebras using what is known about the unit spectrum of $K$-theory. As an example let $X$ be a compact metrizable space and consider the cohomology group $[X, B\uAut{\Cuntz{\infty}\otimes \K}]$, where we use the notation of \cite{paper:DadarlatP1}. Note that the third Postnikov section of $B\uAut{\Cuntz{\infty} \otimes \K}$ is a $K(\Z,3)$, let $B\uAut{\Cuntz{\infty} \otimes \K} \to K(\Z,3)$ be the corresponding map and denote by $F$ its homotopy fiber. The composition $B\Aut{\K} \to B\uAut{\Cuntz{\infty}\otimes \K} \to K(\Z,3)$, where the first map is induced by the unit homomorphism $\C \to \Cuntz{\infty}$, is a homotopy equivalence. Therefore we obtain a homotopy splitting 
\[
	B\Aut{\K} \times F \overset{\simeq}{\longrightarrow} B\uAut{\Cuntz{\infty}\otimes \K}
\]
and a corresponding fibration $B\Aut{\K} \to B\uAut{\Cuntz{\infty}\otimes \K} \to F$. The weak equivalence between $B\uAut{\Cuntz{\infty} \otimes \K}$ and $BBU_{\otimes} \simeq BBU(1) \times BBSU_{\otimes}$ identifies $F$ with the corresponding homotopy fiber of the third Postnikov section of $BBU_{\otimes}$, which is $BBSU_{\otimes}$. Thus, we obtain a short exact sequence
\[
	0 \to [X, B\Aut{\K}] \to [X, B\uAut{\Cuntz{\infty} \otimes \K}] \to [X, F] \cong bsu^1_{\otimes}(X) \to 0
\]
The following was proven by G\'{o}mez in \cite[Theorem 5]{paper:Gomez}.
\begin{theorem}\label{thm:TheNoTwist}
Let $G$ be a compact Lie group. Then $bsu_{\otimes}^1(BG) = 0$.
\end{theorem}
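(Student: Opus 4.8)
The plan is to identify the representing space and then reduce the statement to a residual torsion question at the primes dividing the order of the Weyl group. Recall that $bsu_{\otimes}$ is the $3$-connected summand of $bu_{\otimes}$ split off by the first Chern class, $BU_{\otimes}\simeq BU(1)\times BSU_{\otimes}$, so that $\pi_{n}(bsu_{\otimes})=\Z$ for even $n\geq 4$ and $0$ otherwise. Consequently $bsu^{1}_{\otimes}(X)=[X,Y]$ with $Y=\Omega^{\infty}\Sigma\,bsu_{\otimes}$ a connected infinite loop space whose only nonzero homotopy groups are $\pi_{2k+1}(Y)=\Z$ for $k\geq 2$; that is, $Y$ has homotopy concentrated in \emph{odd} degrees $\geq 5$. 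The assertion is therefore that every based map $BG\to Y$ is null.

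First I would dispose of the rational part. For a compact Lie group $G$ the ring $H^{*}(BG;\Q)$ is polynomial on generators of even degree (for connected $G$; the component group is finite, hence invisible rationally), so $H^{\mathrm{odd}}(BG;\Q)=0$. Since $Y_{\Q}\simeq\prod_{k\geq 2}K(\Q,2k+1)$, this kills $bsu^{1}_{\otimes}(BG)\otimes\Q$, and the Atiyah--Hirzebruch spectral sequence exhibits $bsu^{1}_{\otimes}(BG)$ as a subquotient of $\bigoplus_{p\geq 5,\ p\ \mathrm{odd}}H^{p}(BG;\Z)$, all of which is torsion. Thus the group in question is finite, and it suffices to treat it one prime at a time.

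Next I would use a transfer argument to cut down the relevant primes. Let $T\subset G$ be a maximal torus. Because $BT\simeq(\C P^{\infty})^{r}$ has torsion-free cohomology concentrated in even degrees, $H^{\mathrm{odd}}(BT;\Z)=0$ and hence $bsu^{1}_{\otimes}(BT)=0$ by the previous paragraph. The composite of restriction and transfer along $BT\to BG$ is multiplication by the Euler characteristic $\chi(G/T)=|W|$, so $|W|\cdot bsu^{1}_{\otimes}(BG)=0$; an analogous transfer along $BG_{0}\to BG$ for the identity component annihilates the group by $|\pi_{0}(G)|$ as well. Therefore $bsu^{1}_{\otimes}(BG)$ is supported at the finitely many primes dividing $|W|\cdot|\pi_{0}(G)|$, and for all other primes it already vanishes.

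The remaining and genuinely hard step is the vanishing of the $p$-torsion for the ``bad'' primes $p\mid |W|$. Here I would reduce, $p$-locally, to elementary abelian subgroups $V=(\Z/p)^{n}$ --- either by a detection argument for $BG^{\wedge}_{p}$ or by comparison with the Atiyah--Segal calculation $KU^{1}(BG)=0$ --- and then prove $bsu^{1}_{\otimes}(BV)=0$ directly. This last computation is the crux: $H^{\mathrm{odd}}(BV;\Z)$ is nonzero (the $\mathrm{Tor}$/Bockstein classes already appear for $n\geq 2$), so one cannot conclude from the $E_{2}$-page. Instead one must show that these odd integral classes never survive the spectral sequence for $Y$, i.e.\ that the first nontrivial $k$-invariant of $Y$ (a stable operation of Milnor $Q_{1}$ / integral $Sq^{3}=\beta\,Sq^{2}\rho$ type) maps the even classes of $BV$ \emph{onto} the odd torsion, exactly as the corresponding differentials wipe out $H^{\mathrm{odd}}(BV;\Z)$ in the spectral sequence computing $KU^{1}(BV)=0$. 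I expect the identification of these multiplicative $k$-invariants with the $K$-theoretic differentials, and the verification that the higher stages clear out \emph{all} of $H^{\mathrm{odd}\geq 5}$ and not merely the bottom class, to be the main obstacle; everything preceding it is formal.
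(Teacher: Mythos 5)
The first thing to note is that the paper contains no proof of this statement to compare against: it is quoted verbatim from G\'omez \cite[Theorem 5]{paper:Gomez}. Your proposal must therefore stand on its own, and as written it is a reduction rather than a proof. The reductions you do carry out are essentially sound: rationally $H^{\mathrm{odd}}(BG;\Q)=0$ kills the rational part, and the Becker--Gottlieb transfers along $BT\to BG_0$ and $BG_0\to BG$ bound the exponent of $bsu^1_{\otimes}(BG)$ by $|W|\cdot|\pi_0(G)|$ (one small correction: ``the group in question is finite'' does not follow from the Atiyah--Hirzebruch remark, since for an infinite complex like $BG$ there can a priori be divisible phantom contributions from $\lim^1$; but the transfer bound on the exponent supersedes this, and $bsu^1_{\otimes}(BT)=0$ itself is safe because the skeletal inverse system of $BT$ is Mittag-Leffler). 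What is missing is precisely what you call the crux: the vanishing of the $p$-primary part at primes dividing $|W|\cdot|\pi_0(G)|$, which you leave as an expectation. Note also that the proposed reduction to elementary abelian subgroups is not a clean ``detection'' statement: the Jackowski--McClure approximation of $BG^{\wedge}_p$ is a homotopy colimit over the Quillen category whose values are classifying spaces of \emph{centralizers}, so one inherits a Bousfield--Kan spectral sequence with higher derived limits, not an injection of $[BG^{\wedge}_p,Y]$ into $\prod_V[BV,Y]$.

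More seriously, the mechanism you sketch for the crux cannot work as stated. In the Atiyah--Hirzebruch spectral sequence for $bsu^1_{\otimes}(BV)$ the contributing entries are $E_2^{2k+1,-2k}=H^{2k+1}(BV;\Z)$ with $k\geq 2$, and a differential $d_r$ landing in the bottom entry $E_r^{5,-4}$ would have to originate in a row corresponding to $\pi_{5-r}$ of the spectrum, which vanishes because $bsu_{\otimes}$ is $3$-connected. So $H^5(BV;\Z)$ can never be ``wiped out'' by images of even classes. That mechanism is exactly the one available in the periodic $KU$-spectral sequence, where $H^5$ can be hit by $d_3$ from $H^2(BV;\pi_2 KU)$ and $d_5$ from $H^0(BV;\pi_0 KU)$, and it disappears precisely upon passing to the $3$-connected cover --- which, by Adams--Priddy, is what $bsu_{\otimes}$ is $p$-locally. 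What must actually be proven is the opposite statement: every nonzero class of $H^{5}(BV;\Z)_{(p)}$ (and, in a suitably mixed form, of all higher odd degrees) \emph{supports} a nonzero differential, equivalently is obstructed by some higher $k$-invariant of $Y$. This is not a formal consequence of $K^1(BV)=0$, because in the periodic spectral sequence odd classes may die by being hit rather than by supporting differentials; closing that gap is the real content of the theorem, and it is why the paper appeals to G\'omez rather than giving an argument.
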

In light of our previous results, we obtain the Corollary:
\begin{corollary}
Let $X$ be a compact metrizable space, let $G$ be a compact Lie group and let $P \to X$ be a principal $G$-bundle. Let $\alpha \colon G \to \uAut{\Cuntz{\infty} \otimes \K}$ be a continuous homomorphism. Then the associated $\uAut{\Cuntz{\infty} \otimes \K}$-bundle $Q \to X$ given by
\(
	Q = P \times_{\alpha} \uAut{\Cuntz{\infty} \otimes \K}
\)
is isomorphic to $\widetilde{Q} \times_{\rm Ad} \uAut{\Cuntz{\infty} \otimes \K}$ for a principal $PU(H)$-bundle $\widetilde{Q}$.
\end{corollary}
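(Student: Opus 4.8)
The plan is to translate the isomorphism statement into a lifting problem for classifying maps and then feed in G\'omez's vanishing result. First I would record that $\Aut{\K} \cong PU(H)$, so that $B\Aut{\K} \simeq BPU(H)$ classifies principal $PU(H)$-bundles, and that the inclusion $i \colon B\Aut{\K} \to B\uAut{\Cuntz{\infty} \otimes \K}$ appearing in the splitting above is induced by the group homomorphism $\mathrm{Ad} \colon PU(H) \cong \Aut{\K} \to \uAut{\Cuntz{\infty} \otimes \K}$, $g \mapsto \id{\Cuntz{\infty}} \otimes g$. For a homomorphism of topological groups the induced map on classifying spaces realizes the associated-bundle construction, so $i_*$ sends the class of a principal $PU(H)$-bundle $\widetilde{Q}$ to the class of $\widetilde{Q} \times_{\mathrm{Ad}} \uAut{\Cuntz{\infty} \otimes \K}$. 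Hence the assertion that $Q \cong \widetilde{Q} \times_{\mathrm{Ad}} \uAut{\Cuntz{\infty} \otimes \K}$ for some $\widetilde{Q}$ is equivalent to the statement that the class $[Q] \in [X, B\uAut{\Cuntz{\infty} \otimes \K}]$ lies in the image of $i_*$.

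Next I would use the split short exact sequence of abelian groups established above,
\[
	0 \to [X, B\Aut{\K}] \xrightarrow{\ i_*\ } [X, B\uAut{\Cuntz{\infty} \otimes \K}] \xrightarrow{\ p_*\ } [X, F] \to 0 ,
\]
where $p \colon B\uAut{\Cuntz{\infty} \otimes \K} \to F$ is the projection in the fibration $B\Aut{\K} \to B\uAut{\Cuntz{\infty} \otimes \K} \to F$ above. By exactness in the middle, $[Q]$ lifts along $i_*$ precisely when $p_*[Q] = 0$ in $[X, F] \cong bsu^1_{\otimes}(X)$. Now the bundle $Q = P \times_{\alpha} \uAut{\Cuntz{\infty} \otimes \K}$ is classified by the composite $B\alpha \circ f$, where $f \colon X \to BG$ is a classifying map for the principal $G$-bundle $P$ and $B\alpha \colon BG \to B\uAut{\Cuntz{\infty} \otimes \K}$ is induced by $\alpha$. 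Consequently $p_*[Q] = f^{*}\bigl([\,p \circ B\alpha\,]\bigr)$ is the pullback along $f$ of the universal class $[\,p \circ B\alpha\,] \in [BG, F] \cong bsu^1_{\otimes}(BG)$.

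The whole argument then collapses to G\'omez's Theorem \ref{thm:TheNoTwist}: since $bsu^1_{\otimes}(BG) = 0$ for a compact Lie group $G$, the class $[\,p \circ B\alpha\,]$ already vanishes in $bsu^1_{\otimes}(BG)$, so its pullback $p_*[Q] = f^{*}([\,p \circ B\alpha\,])$ vanishes as well. Exactness then yields a class $[\widetilde{Q}] \in [X, B\Aut{\K}]$, i.e.\ a principal $PU(H)$-bundle $\widetilde{Q}$, with $i_*[\widetilde{Q}] = [Q]$, which is exactly the desired isomorphism $Q \cong \widetilde{Q} \times_{\mathrm{Ad}} \uAut{\Cuntz{\infty} \otimes \K}$.

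I expect the only genuinely delicate points to be bookkeeping rather than hard analysis: verifying that $i_*$ really implements the $\times_{\mathrm{Ad}}$ construction, and checking the naturality needed to write $p_*[Q]$ as a pullback along $f$ of a universal class living in the cohomology theory $bsu_{\otimes}$, so that $bsu^1_{\otimes}(BG) = 0$ can be invoked directly. Everything substantive is supplied by the splitting constructed above together with G\'omez's computation; no further operator-algebraic input is needed beyond the identification $\Aut{\K} \cong PU(H)$.
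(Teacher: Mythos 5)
Your proposal is correct and follows essentially the same route as the paper: both express $[Q]$ as the pullback $f_P^*[B\alpha]$, use naturality of the map to $[{-},F] \cong bsu^1_{\otimes}({-})$ together with G\'omez's vanishing theorem to conclude $p_*[Q]=0$, and then invoke exactness of the sequence $0 \to [X,B\Aut{\K}] \to [X,B\uAut{\Cuntz{\infty}\otimes\K}] \to [X,F] \to 0$ to lift $[Q]$ to a principal $PU(H)$-bundle. The only difference is that you spell out the bookkeeping (that $i_*$ implements the $\times_{\rm Ad}$ construction) which the paper leaves implicit.
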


\begin{proof}
The class $[Q] \in [X, B\uAut{\Cuntz{\infty} \otimes \K}]$ is the pullback of $[B\alpha] \in [BG, B\uAut{\Cuntz{\infty}\otimes \K}]$ via the classifying map $f_P \colon X \to BG$ of $P$. The diagram
\[
	\xymatrix{
	& & [BG, B\uAut{\Cuntz{\infty}\otimes \K}] \ar[d] \ar[r] & [BG,F] \cong bsu^1_{\otimes}(BG)\ar[d] \\
	0 \ar[r] & [X, B\Aut{\K}] \ar[r] & [X, B\uAut{\Cuntz{\infty}\otimes \K}] \ar[r] & [X,F] \ar[r] & 0
	}
\]
shows that $[Q]$ is mapped to $0$ in $[X,F] \cong bsu^1_{\otimes}(X)$, which implies the statement.
\end{proof}

\bibliographystyle{plain}
\bibliography{HigherTwisted}

\end{document}